  \newcommand\imCMsym[4][\mathord]{%
  \DeclareFontFamily{U} {#2}{}
  \DeclareFontShape{U}{#2}{m}{n}{
    <-6> #25
    <6-7> #26
    <7-8> #27
    <8-9> #28
    <9-10> #29
    <10-12> #210
    <12-> #212}{}
  \DeclareSymbolFont{CM#2} {U} {#2}{m}{n}
  \DeclareMathSymbol{#4}{#1}{CM#2}{#3}
}
\newcommand\alsoimCMsym[4][\mathord]{\DeclareMathSymbol{#4}{#1}{CM#2}{#3}}
\theoremstyle{plain}
\newtheorem*{theoremu}{Theorem}
\newtheorem{theorem}{Theorem}[section]
\newtheorem{proposition}[theorem]{Proposition}
\newtheorem{corollary}[theorem]{Corollary}
\newtheorem{lemma}[theorem]{Lemma}
\theoremstyle{definition}
\newtheorem{definition}[theorem]{Definition}
\theoremstyle{remark}
\newtheorem{remark}[theorem]{Remark}
\newcommand{\N}{{\mathbb N}}
\newcommand{\Z}{{\mathbb Z}}
\newcommand{\Q}{{\mathbb Q}}
\newcommand{\C}{{\mathbb C}}
\newcommand{\F}{{\mathbb F}}
\newcommand{\isomto}{\overset{\sim}{\rightarrow}}
\newcommand{\bu}{\bullet}
\newcommand{\lser}[1]{(\!(#1)\!)}
\newcommand{\pow}[1]{\llbracket #1 \rrbracket}
\newcommand{\tate}[1]{\langle #1 \rangle}
\newcommand{\spec}[1]{\mathrm{Spec}\left(#1\right)}
\newcommand{\cur}[1]{\mathcal{#1}}
\newcommand{\pn}{(\varphi,\nabla)}
\newcommand{\cris}{\mathrm{cris}}
\newcommand{\rig}{\mathrm{rig}}
\newcommand{\et}{\mathrm{\acute{e}t}}
\title{Cycle classes in overconvergent rigid cohomology and a semistable Lefschetz $\bm{(1,1)}$ theorem}
\author{Christopher Lazda}
       \address{ Korteweg-de\thinspace Vries Institute\\ Universiteit van Amsterdam\\ P.O. Box 94248 \\1090 GE\\ Amsterdam\\ the Netherlands}
       \email{c.d.lazda@uva.nl}
\author{Ambrus P\'al}
       \address{ Department of Mathematics\\ Imperial College London \\
       Huxley Building, 180 Queen's Gate\\
       South Kensington, London, SW7 2AZ\\
       UK}
       \email{a.pal@imperial.ac.uk}       
\begin{document}

\begin{abstract} In this article we prove a semistable version of the variational Tate conjecture for divisors in crystalline cohomology, showing that for $k$ a perfect field of characteristic $p$, a rational (logarithmic) line bundle on the special fibre of a semistable scheme over $k\pow{t}$ lifts to the total space if and only if its first Chern class does. The proof is elementary, using standard properties of the logarithmic de\thinspace Rham--Witt complex. As a corollary, we deduce similar algebraicity lifting results for cohomology classes on varieties over global function fields. Finally, we give a counter example to show that the variational Tate conjecture for divisors cannot hold with $\Q_p$-coefficients.
\end{abstract}

\maketitle 

\tableofcontents

\section*{Introduction}

Many of the deepest conjectures in arithmetic and algebraic geometry concern the existence of algebraic cycles on varieties with certain properties. For example, the Hodge and Tate conjectures state, roughly speaking, that on smooth and projective varieties over $\C$ (Hodge) or finitely generated fields (Tate) every cohomology class which `looks like' the class of a cycle is indeed so. One can also pose variational forms of these conjectures, giving conditions for extending algebraic classes from one fibre of a smooth, projective morphism $f:X\rightarrow S$ to the whole space. For divisors, the Hodge forms of both these conjectures (otherwise known as the Lefschetz $(1,1)$ theorem) are relatively straightforward to prove, using the exponential map, but even for divisors the Tate conjecture remains wide open in general.

Applying the principle that deformation problems in characteristic $p$ should be studied using $p$-adic cohomology, Morrow in \cite{Mor14} formulated a crystalline variational Tate conjecture for smooth and proper families $f:X\rightarrow S$ of varieties in characteristic $p$, and proved the conjecture for divisors, at least when $f$ is projective. The key step of the proof is a version of this result over $S=\spec{k\pow{t_1,\ldots,t_n}}$, with $k$ a perfect field of characteristic $p$. When $n=1$ this is a direct equicharacteristic analogue of Berthelot and Ogus' theorem \cite[Theorem 3.8]{BO83} on lifting line bundles from characteristic $p$ to characteristic $0$. 

Morrow's proof of the local statement uses some fairly heavy machinery from motivic homotopy theory, in particular a `continuity' result for topological cyclic homology. In this article we provide a new proof of the local crystalline variational Tate conjecture for divisors, at least over the base $S=\spec{k\pow{t}}$, which only uses some fairly basic properties of the de\thinspace Rham--Witt complex, and is close in spirit to the approach taken in \cite{Mor15}. The point of giving this proof is that it adapts essentially verbatim to the case of semistable reduction, once the corresponding basic properties of the \emph{logarithmic} de\thinspace Rham--Witt complex are in place.

So let $\cur X$ be a semistable, projective scheme over $k\pow{t}$, with special fibre $X_0$ and generic fibre $X$. Write $K=W(k)[1/p]$ and let $\mathcal{R}$ denote the Robba ring over $K$. Then there is an isomorphism
\[ H^2_\rig(X/\cur{R})^{\nabla=0}\cong H^2_\mathrm{log\text{-}\mathrm{cris}}(X_0^\times/K^\times)^{N=0}\]
between the horizontal sections of the Robba ring-valued rigid cohomology of $X$ and the part of the log-crystalline cohomology of $X_0$ killed by the monodromy operator. The former is defined to be the base change $H^2_\rig(X/\cur{E}^\dagger)\otimes_{\cur{E}^\dagger} \cur{R}$ to the Robba ring of the $\mathcal{E}^\dagger$-valued rigid cohomology $H^2_\rig(X/\cur{E}^\dagger)$ constructed in \cite{LP16}. These groups are $\pn$-modules over $\cur R$ and $\cur E^\dagger$ respectively. In particular, if $\cur L$ is a line bundle on $X_0$, we can view its first Chern class $c_1(\cur L)$ as an element of $H^2_\rig(X/\cur{R})$. Our main result is then the following semistable version of the local crystalline variational Tate conjecture for divisors.

\begin{theoremu}[\ref{mainss}] $\cur L$ lifts to $\mathrm{Pic}(\cur X)_{\Q}$ if and only if $c_1(\cur L)$ lies in $H^2_\rig(X/\cur{E}^\dagger)\subset H^2_\rig(X/\cur{R})$.
\end{theoremu}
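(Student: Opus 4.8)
The plan is to adapt the classical Lefschetz $(1,1)$ argument --- in which lifting a line bundle is reduced, through a cycle class exact sequence, to lifting its first Chern class --- replacing the exponential sequence by the exact sequences attached to the logarithmic de\thinspace Rham--Witt complex of the semistable model $\cur X$ and of its special fibre $X_0^\times$. I will take for granted the ``basic properties'' of the logarithmic de\thinspace Rham--Witt complex alluded to above: that it computes $H^*_\rig(X/\cur E^\dagger)$ and $H^*_\lc(X_0^\times/K^\times)$ compatibly with Frobenius, the connection/monodromy, restriction to $X_0$ and cycle classes, and in particular that the $d\log$ map carries $\mathrm{Pic}$ into its degree-one cohomology and realises $c_1$ there.

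The implication ``$\cur L$ lifts $\Rightarrow c_1(\cur L)\in H^2_\rig(X/\cur E^\dagger)$'' is the straightforward half: if $\cur L$ lifts to $\widetilde{\cur L}\in\mathrm{Pic}(\cur X)_\Q$, then $d\log\widetilde{\cur L}$ already lives over $\cur E^\dagger$ and specialises to $c_1(\cur L)$, so $c_1(\cur L)$ lands in the image of $H^2_\rig(X/\cur E^\dagger)$. All the content is in the converse.

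For the converse, I first note that $c_1(\cur L)$, being the class of an honest line bundle on $X_0$, is horizontal and killed by the monodromy operator, so the hypothesis reads $c_1(\cur L)\in H^2_\rig(X/\cur E^\dagger)^{\nabla=0}\subseteq H^2_\rig(X/\cur R)^{\nabla=0}\cong H^2_\lc(X_0^\times/K^\times)^{N=0}$. I then set up a commutative ladder of cycle class exact sequences: for $\cur X$, expressing (after $\otimes\,\Q_p$) the image of $c_1\colon\mathrm{Pic}(\cur X)_{\Q_p}\to H^2_\rig(X/\cur E^\dagger)$ as the Tate part $H^2_\rig(X/\cur E^\dagger)^{\varphi=p,\,\nabla=0}$ up to a ``Brauer'' error term, and the analogue for $X_0^\times$ with $H^2_\lc(X_0^\times/K^\times)$; these are compatible with restriction to $X_0$. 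Chasing the ladder, the hypothesis puts $c_1(\cur L)$ in the Tate part over $\cur E^\dagger$, hence $c_1(\cur L)=c_1(\widetilde{\cur L})$ for some $\widetilde{\cur L}\in\mathrm{Pic}(\cur X)_{\Q_p}$ --- here one must check the Brauer error is not in the way, which I would do by comparing it with the Brauer group of $X_0$ (on which $c_1(\cur L)$ is visibly algebraic), the discrepancy between the two being built out of the $p$-torsion groups $H^\bullet(X_0,\cur O_{X_0})$ and hence invisible rationally. Restricting $\widetilde{\cur L}$ to $X_0$ and comparing cycle classes shows that $\widetilde{\cur L}|_{X_0}$ and $\cur L$ differ by an element of $\mathrm{Pic}^0(X_0)_{\Q_p}$; since for a projective semistable family $\mathrm{Pic}^0$ extends to a semi-abelian scheme over $k\pow{t}$, whose points reduce surjectively onto those of the special fibre by smoothness, $\mathrm{Pic}^0(\cur X)\to\mathrm{Pic}^0(X_0)$ is surjective, so $\widetilde{\cur L}$ can be corrected to a lift of $\cur L$ over $\Q_p$. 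Finally $\mathrm{coker}(\mathrm{Pic}(\cur X)\to\mathrm{Pic}(X_0))$ is finitely generated (its divisible part $\mathrm{Pic}^0(X_0)$ being already in the image), so lifting over $\Q_p$ forces lifting over $\Q$.

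The main obstacle is the first ingredient of the converse: producing the cycle class exact sequence for the \emph{total} semistable model $\cur X$ and identifying it with $H^*_\rig(X/\cur E^\dagger)$ through the logarithmic de\thinspace Rham--Witt complex, made to interact correctly with the $t$-adic filtration --- equivalently, with the tower $\{\cur X\otimes_{k\pow{t}}k[t]/t^{n+1}\}_n$ and the attendant $\varprojlim$/Mittag-Leffler bookkeeping --- so that ``$c_1(\cur L)$ extends to $\cur E^\dagger$'' becomes literally ``$\cur L$ lifts rationally to $\cur X$''. Granting the structural facts about the logarithmic de\thinspace Rham--Witt complex, which by design mirror the smooth case, the remainder is a diagram chase together with standard facts about Picard schemes and Néron models of semistable families.
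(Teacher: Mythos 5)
Your forward implication is fine, but the converse contains a step that is not just unproved but actually contradicted elsewhere in the paper. You propose a ladder of ``cycle class exact sequences'' identifying the image of $c_1\colon\mathrm{Pic}(\cur X)_{\Q_p}\to H^2_\rig(X/\cur{E}^\dagger)$ with the full Tate part $H^2_\rig(X/\cur{E}^\dagger)^{\nabla=0,\varphi=p}$ up to a rationally invisible ``Brauer'' term, and you then conclude that any Tate class, in particular $c_1(\cur L)$, is of the form $c_1(\widetilde{\cur L})$. That identification is precisely the Tate conjecture for divisors over $k\pow{t}$: it is not known with $\Q$-coefficients, and with $\Q_p$-coefficients it is \emph{false} --- \S6 of the paper shows that surjectivity of $\mathrm{Pic}(X)_{\Q_p}\to H^2_\rig(X/\cur{E}^\dagger)^{\nabla=0,\varphi=p}$ would imply Tate's isogeny theorem over $k\lser{t}$ (Theorem \ref{tateiso}), and Proposition \ref{supersingular} produces a product of elliptic curves for which this fails. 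So the argument is circular: it assumes something strictly stronger than what is to be proved. The supporting claim that the error term is ``built out of the $p$-torsion groups $H^\bullet(X_0,\cur O_{X_0})$ and hence invisible rationally'' is the precise point where this goes wrong: the obstruction to lifting $\cur L$ from $X_0$ to the formal scheme $\frak X$ lives in $H^2_\mathrm{cont}(X_{0,\et},\{1+t\cur{O}_{X_n}\}_n)$, an inverse limit of groups killed by \emph{growing} powers of $p$, and such a limit is in general not torsion and does not vanish after $\otimes\,\Q$. If it did, every $\cur L\in\mathrm{Pic}(X_0)_{\Q}$ would lift unconditionally and the theorem would have no content (and the $\Q_p$-statement would hold, contradicting the counter-example).

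The actual proof has to use the hypothesis to kill exactly this obstruction, and it does so by a different mechanism than the one you sketch: (i) the pro-sheaf $\cur{K}_{n,r}=\ker(W_r\omega^1_{X_n^\times,\log}\to W_r\omega^1_{X_0^\times/k^\times,\log})$ splits as $(1+t\cur{O}_{X_n})\times\Z/p^r\Z$ compatibly in $n$ and $r$, so the lifting obstruction injects into $H^2_\mathrm{cont}$ of $\{\cur{K}_{n,r}\}_{n,r}$, where it dies as soon as $c_1(\cur L)$ lifts to $H^1_\mathrm{cont}(\cur{X}_\et,W\omega^1_{\cur X^\times,\log})$; (ii) this group surjects rationally onto $H^2_{\log\text{-}\mathrm{cris}}(\cur X^\times/K)^{\varphi=p}$; and (iii) the hypothesis ``$c_1(\cur L)\in H^2_\rig(X/\cur{E}^\dagger)$'' is converted into ``$c_1(\cur L)$ lifts to $H^2_{\log\text{-}\mathrm{cris}}(\cur X^\times/K)^{\varphi=p}$'' by a degenerating Leray spectral sequence argument (hard Lefschetz, hence the projectivity hypothesis) together with Kedlaya's full faithfulness theorem, which identifies $H^0_{\log\text{-}\mathrm{cris}}(\spec{R^\times},\mathbf{R}^2f_*\cur{O}^{\mathrm{cris}})^{\varphi=p}$ with $H^2_\rig(X/\cur{E}^\dagger)^{\nabla=0,\varphi=p}$. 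None of (i)--(iii) is present in your outline, and without them the ``diagram chase'' has nothing to chase. Your concluding steps (correcting by $\mathrm{Pic}^0$, descending from $\Q_p$ to $\Q$) are also not needed in the paper's argument and would require separate justification, but they are secondary to the main gap.
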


There is also a version for logarithmic line bundles on $X_0$. The general philosophy of $p$-adic cohomology over $k\lser{t}$ is that the $\cur{E}^\dagger$-structure $H^i_\rig(X/\cur{E}^\dagger)\subset H^i_\rig(X/\cur{R})$ is the equicharactersitic analogue of the Hogde filtration on the $p$-adic cohomology of varieties over mixed characteristic local fields. With this in mind, this is the direct analogue of Yamashita's semistable Lefschetz (1,1) theorem \cite{Yam11}. As a corollary, we can deduce a global result on algebraicity of cohomology class as follow. Let $F$ be a function field of transcendence degree 1 over $k$, and $X/F$ a smooth projective variety. Let $v$ be a place of semistable reduction for $X$, with reduction $X_v$. In this situation, we can consider the rigid cohomology of $X/K$ (see \S\ref{glob}), and there is a map
\[ \mathrm{sp}_v:\cur{H}^2_\rig(X/K)^{\nabla=0}\rightarrow H^2_\mathrm{log\text{-}\mathrm{cris}}(X_v^\times/K_v^\times)\]
from the second cohomology of $X$ to the log crystalline cohomology of $X_v$. 

\begin{theoremu}[\ref{glma}]  A class $\alpha\in \cur{H}^2_\rig(X/K)^{\nabla=0}$ is in the image of $\mathrm{Pic}(X)_{\Q}$ under the Chern class map if and only if $\mathrm{sp}_v(\alpha)$ is in the image of $\mathrm{Pic}(X_v)_{\Q}$.
\end{theoremu}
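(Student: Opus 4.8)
The plan is to reduce the global statement to the local Theorem~\ref{mainss} by spreading out line bundles and their cycle classes between a model of $X$ over the curve and its completion at $v$. Throughout, let $C$ be the smooth proper curve with function field $F$, let $\cur O_{C,v}$ be the local ring at $v$ with completion $\widehat{\cur O}_{C,v}\cong k'\pow t$ (here $k'$ is the residue field at $v$), and fix a projective model $\mathfrak X\to \cur O_{C,v}$ of $X$ that is semistable over $\widehat{\cur O}_{C,v}$, with special fibre $X_v$; write $\cur X$ for its completion along $X_v$, a semistable projective formal scheme over $k'\pow t$. One implication is essentially formal: if $\alpha=c_1(\cur M)$ with $\cur M\in\mathrm{Pic}(X)_\Q$, then after shrinking the base so that $\cur M$ extends to $\mathfrak X$, the compatibility of the cycle class map with the specialisation map $\mathrm{sp}_v$ constructed in \S\ref{glob} gives $\mathrm{sp}_v(\alpha)=c_1(\cur M|_{X_v})\in\mathrm{Pic}(X_v)_\Q$. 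So the content is the converse.

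Suppose then that $\mathrm{sp}_v(\alpha)=c_1(\cur L)$ for some line bundle $\cur L$ on $X_v$. First I would unwind $\mathrm{sp}_v$: by construction it is the composite
\[ \cur H^2_\rig(X/K)^{\nabla=0}\longrightarrow H^2_\rig(X_{F_v}/\cur R)^{\nabla=0}\overset{\sim}{\longrightarrow} H^2_\lc(X_v^\times/K_v^\times)^{N=0}\hookrightarrow H^2_\lc(X_v^\times/K_v^\times), \]
the middle arrow being the isomorphism displayed above; moreover the first arrow, being restriction of a global horizontal class, lands in the sub-$\pn$-module $H^2_\rig(X_{F_v}/\cur E^\dagger)$. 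Since $c_1(\cur L)$ is annihilated by the monodromy operator $N$, the hypothesis says precisely that the image of $\alpha$ in $H^2_\rig(X_{F_v}/\cur R)$ equals the class of $c_1(\cur L)$ and, being the restriction of $\alpha$, lies in $H^2_\rig(X_{F_v}/\cur E^\dagger)$. Theorem~\ref{mainss}, applied to $\cur X/k'\pow t$, then yields that $\cur L$ lifts to some $\wt{\cur L}\in\mathrm{Pic}(\cur X)_\Q$.

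It remains to globalise $\wt{\cur L}$ and to recognise its cycle class. After clearing denominators $\wt{\cur L}$ is an honest line bundle on the proper formal scheme $\cur X$; by Grothendieck's existence theorem it algebraises to a line bundle on $\mathfrak X\times_{\cur O_{C,v}}\widehat{\cur O}_{C,v}$, and then Artin approximation applied to the (locally finitely presented) Picard algebraic space of $\mathfrak X/\cur O_{C,v}$, together with the fact that the henselisation of $\cur O_{C,v}$ is a filtered colimit of étale neighbourhoods, produces a line bundle on $\mathfrak X\times_{\cur O_{C,v}}\cur O_{W'}$ for some étale neighbourhood $W'\to C$ of $v$, restricting to $\cur L$ on the special fibre $X_{v'}$ (up to denominators, $v'$ being the point of $W'$ over $v$). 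Its generic fibre is a line bundle $\cur M$ on $X_{F'}$, with $F'/F$ the finite residue extension. Since $\cur M|_{X_{v'}}$ is (rationally) the pullback of $\cur L$, compatibility of cycle classes with $\mathrm{sp}$ and with finite pullback gives $\mathrm{sp}_{v'}\!\big(c_1(\cur M)\big)=\mathrm{sp}_{v'}\!\big(\pi^*\alpha\big)$, where $\pi\colon X_{F'}\to X_F$. Now I would invoke injectivity of $\mathrm{sp}_{v'}$ on $\nabla$-horizontal classes — a horizontal section of the overconvergent $F$-isocrystal underlying $\cur H^2_\rig(X_{F'}/K')$ which dies on an annulus around $v'$ must vanish, by rigid-analytic continuation — to conclude $c_1(\cur M)=\pi^*\alpha$. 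Finally, pushing forward and using $\pi_*\pi^*=[F':F]$ on rigid cohomology, $\alpha=\tfrac1{[F':F]}\pi_*c_1(\cur M)=\tfrac1{[F':F]}c_1\!\big(\mathrm{Nm}_{F'/F}\cur M\big)\in\mathrm{Pic}(X)_\Q$, as required.

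The serious inputs are twofold: (i) injectivity of $\mathrm{sp}_v$ on horizontal classes, which is exactly where the overconvergence of the coefficients is genuinely used and which I expect to be the main obstacle; and (ii) the compatibility of the cycle class map with restriction to $X_v$, with descent along $W'\to C$, and with finite pushforward — a package of functorialities which should follow from the way $\mathrm{sp}_v$ and the Chern class are set up in \S\ref{glob} but must be checked carefully. Once these are in hand, the Grothendieck existence and approximation steps, and the passage through the finite extension $F'$ (harmless with $\Q$-coefficients), are routine.
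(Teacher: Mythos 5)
Your proposal is correct and follows essentially the same route as the paper: apply the local semistable Theorem \ref{mainss} (via Theorem \ref{algeq}) over the completion at $v$ to produce a line bundle on $X_{F_v}$ with Chern class $r_v(\alpha)$, descend it to a finite separable extension $F'/F$ by an approximation argument over the henselisation, and conclude by pushing forward and dividing by the degree. The only differences are cosmetic: the paper descends by applying N\'eron--Popescu desingularisation to $F_v^h\rightarrow F_v$ directly on the generic fibre rather than Grothendieck existence plus Artin approximation for the Picard space of an integral model, and it leaves implicit the injectivity of the restriction map on horizontal classes that you rightly single out as the point where analytic continuation of horizontal sections is used.
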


One might wonder whether the analogue of the crystalline variational Tate conjecture holds for line bundles with $\Q_p$-coefficients (in either the smooth or semistable case). Unfortunately, the answer is no. Indeed, if it were true, then it follows relatively easily that the analogue of Tate's isogeny theorem would hold over $k\lser{t}$, in other words for any two abelian varieties $A,B$ over $k\lser{t}$, the map
\[ \mathrm{Hom}(A,B)\otimes \Q_p \rightarrow \mathrm{Hom}(A[p^\infty],B[p^\infty])\otimes_{\Z_p} \Q_p  \]
would be an isomorphism. That this cannot be true is well-known, and examples can be easily provided with both $A$ and $B$ elliptic curves.

Let us now summarise the contents of this article. In \S1 we show that the cycle class map in rigid cohomology over $k\lser{t}$ descends to the bounded Robba ring. In \S2 we recall the relative logarithmic de\thinspace Rham--Witt complex, and prove certain basic properties of it that we will need later on. In \S3 we reprove a special case of the key step in Morrow's article \cite{Mor14}, showing the crystalline variational Tate conjecture for smooth and projective schemes over $k\pow{t}$. The argument we give is elementary. In \S4 we prove the semistable version of the crystalline variational Tate conjecture over $k\pow{t}$, more or less copying word for word the argument in \S3. In \S5 we translate these results into algebraicity lifting results for varieties over \emph{global} function fields. Finally, in \S6 we give a counter-example to the analogue of the of crystalline variational Tate conjecture for line bundles with $\Q_p$-coefficients.

\subsection*{Acknowledgements}

A. P\'al was partially supported by the EPSRC grant P36794. C. Lazda was supported by a Marie Curie fellowship of the Istituto Nazionale di Alta Matematica ``F. Severi''. Both authors would like to thank Imperial College London and the Universit\`a Degli Studi di Padova for hospitality during the writing of this article.

\subsection*{Notations and convenions}

Throughout we will let $k$ be a perfect field of characteristic $p>0$, $W$ its ring of Witt vectors and $K=W[1/p]$. In general we will let $F=k\lser{t}$ be the field of Laurent series over $k$, and $R=k\pow{t}$ its ring of integers (although this will not be the case in \S\ref{glob}). We will denote by $\cur{E}^\dagger,\cur{R},\cur{E}$ respectively the bounded Robba ring, the Robba ring, and the Amice ring over $K$, and we will also write $\cur{E}^+=W\pow{t}\otimes_W K$. For any of the rings $\cur{E}^+$, $\cur{E}^\dagger$, $\cur{R}$, $\cur{E}$ we will denote by $\underline{\mathbf{M}\Phi}^\nabla_{(-)}$ the corresponding category of $\pn$-modules, i.e. finite free modules with connection and horizontal Frobenius. A variety over a given Noetherian base scheme will always mean a separated scheme of finite type. For any abelian group $A$ and any ring $S$ we will let $A_S$ denote $A\otimes_{\Z} S$.

\section{Cycle class maps in overconvergent rigid cohomology}\label{CCM}

Recall that for varieties $X/F$ over the field of Laurent series $F=k\lser{t}$ the rigid cohomology groups $H^i_\rig(X/\cur{E})$ are naturally $\pn$-modules over the Amice ring $\cur{E}$. In the book \cite{LP16} we showed how to canonically descend these cohomology groups to obtain `overconvergent' $\pn$-modules $H^i_\rig(X/\cur{E}^\dagger)$ over the bounded Robba ring $\cur{E}^\dagger$, these groups satisfy all the expected properties of an `extended' Weil cohomology theory. In particular, there exist versions $H^i_{c,\rig}(X/\cur{E})$, $H^i_{c,\rig}(X/\cur{E}^\dagger)$ with compact support.

\begin{definition} Define the (overconvergent) rigid homology of a variety $X/F$ by 
\[ H^\rig_i(X/\cur{E}):=H^i_\rig(X/\cur{E})^\vee,\;\; H^\rig_i(X/\cur{E}^\dagger):=H^i_\rig(X/\cur{E}^\dagger)^\vee  \]
and the (overconvergent) Borel--Moore homology by
\[ H^{\mathrm{BM},\rig}_i(X/\cur{E}):=H^i_{c,\rig}(X/\cur{E})^\vee,\;\;H^{\mathrm{BM},\rig}_i(X/\cur{E}^\dagger):=H^i_{c,\rig}(X/\cur{E}^\dagger)^\vee.\]
\end{definition}

In \cite{Pet03} the author constructs cycle class maps in rigid cohomology, which can be viewed as homomorphisms
\[  A_d(X)\rightarrow H_{2d}^{\mathrm{BM},\rig}(X/\cur{E})\]
from the group of $d$-dimensional cycles modulo rational equivalence. Our goal in this section is the following entirely straightforward result.

\begin{proposition} The cycle class map descends to a homomorphism
\[ A_d(X)\rightarrow H_{2d}^{\mathrm{BM},\rig}(X/\cur{E}^\dagger)^{\nabla=0,\varphi=p^d}. \]\end{proposition}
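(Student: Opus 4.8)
The plan is to check that the cycle class map $A_d(X)\to H_{2d}^{\mathrm{BM},\rig}(X/\cur E)$ constructed in \cite{Pet03} factors through the subspace $H_{2d}^{\mathrm{BM},\rig}(X/\cur E^\dagger)^{\nabla=0,\varphi=p^d}$. There are really two separate things to verify: first, that the image lands in the image of the natural map $H_{2d}^{\mathrm{BM},\rig}(X/\cur E^\dagger)\to H_{2d}^{\mathrm{BM},\rig}(X/\cur E)$ (equivalently, dually, that the class of a cycle in $H^{2d}_{c,\rig}(X/\cur E)$ comes from $H^{2d}_{c,\rig}(X/\cur E^\dagger)$ via the comparison isomorphism of \cite{LP16}); and second, that once descended it is automatically annihilated by $\nabla$ and is an eigenvector for $\varphi$ with eigenvalue $p^d$. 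I expect the $\varphi$- and $\nabla$-equivariance to be the soft part: the cycle class map of \cite{Pet03} is compatible with Frobenius and with the connection by construction (cycle classes are Tate-twisted and horizontal in any reasonable theory), and since the comparison $H^i_\rig(X/\cur E^\dagger)\otimes_{\cur E^\dagger}\cur E\cong H^i_\rig(X/\cur E)$ of \cite{LP16} is an isomorphism of $\pn$-modules, these structures transport without loss. Concretely, $\nabla=0$ and $\varphi=p^d$ can be checked after the faithfully flat base change $\cur E^\dagger\to\cur E$, where they hold by \cite{Pet03}.

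So the crux is descent to $\cur E^\dagger$. The strategy I would adopt is reduction to the smooth proper case by the standard dévissage that underlies the construction of cycle classes. By resolution/alteration-free elementary arguments (or simply by the definition in \cite{Pet03} via Gysin maps), the class of an integral subvariety $Z\subseteq X$ of dimension $d$ is pushed forward from $H^{2d}_{c,\rig}(Z/\cur E)\to H^{2d}_{c,\rig}(X/\cur E)$ from the fundamental class of $Z$ itself, and the fundamental class of $Z$ is in turn pulled back via restriction to a smooth dense open $U\subseteq Z$. Since all the maps in sight — proper pushforward $f_*$ on compactly supported cohomology, and restriction to opens — are defined at the level of $\cur E^\dagger$-cohomology in \cite{LP16} and are compatible with the base change to $\cur E$, it suffices to produce the fundamental class of a smooth variety $U$ of dimension $d$ inside $H^{2d}_{c,\rig}(U/\cur E^\dagger)$. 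For smooth $U$, Poincaré duality (available over $\cur E^\dagger$ by \cite{LP16}) identifies $H^{2d}_{c,\rig}(U/\cur E^\dagger)$ with the dual of $H^0_\rig(U/\cur E^\dagger)$, and on each connected component $H^0_\rig(U/\cur E^\dagger)\cong\cur E^\dagger$ with its standard $\pn$-structure; the fundamental class is then the functional dual to $1$, suitably normalized, which manifestly lies in the $\cur E^\dagger$-structure and has the right Frobenius eigenvalue after the Tate twist is accounted for. Tracing this through the dévissage gives the descended cycle class, and its $(\nabla,\varphi)$-properties follow from those of the building blocks together with compatibility of Gysin and restriction maps with $\varphi$ and $\nabla$.

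The main obstacle, then, is purely bookkeeping: one must know that \emph{all} the functorial operations used in \cite{Pet03} to build the cycle class map — Gysin pushforwards for closed immersions and for proper morphisms, restriction to open subschemes, cup products with the fundamental class, and Poincaré duality — are already available on the overconvergent groups $H^\bullet_{(c,)\rig}(-/\cur E^\dagger)$ and are intertwined with the analogous operations over $\cur E$ by the comparison isomorphisms of \cite{LP16}. Granting that package (which is exactly what \cite{LP16} provides, since it establishes $H^\bullet_\rig(-/\cur E^\dagger)$ as an extended Weil cohomology with all six operations), the proposition is formal. I would therefore spend the bulk of the write-up pinning down the smooth proper (or smooth quasi-projective) base case and then invoking functoriality, rather than re-deriving any cohomological machinery; the word ``straightforward'' in the statement is, I believe, justified precisely because \cite{LP16} has done the heavy lifting.
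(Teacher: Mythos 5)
Your proposal is correct and follows essentially the same route as the paper: reduce to an integral $Z\subset X$, write $\eta(Z)$ as the image of the fundamental class of $Z$ (which is just the trace map $\mathrm{Tr}_Z$) under the map induced by $H^{2d}_{c,\rig}(X/\cur{E})\to H^{2d}_{c,\rig}(Z/\cur{E})$, and observe that the trace and functoriality maps already exist, horizontally and Frobenius-equivariantly, over $\cur{E}^\dagger$ by \cite{LP16}; your extra d\'evissage through a smooth dense open of $Z$ is not needed once one takes the trace map as the fundamental class. The paper also records a shortcut you miss: since the cycle class lands in $H_{2d}^{\mathrm{BM},\rig}(X/\cur{E})^{\nabla=0,\varphi=p^d}$, Kedlaya's full faithfulness theorem identifies this space with $H_{2d}^{\mathrm{BM},\rig}(X/\cur{E}^\dagger)^{\nabla=0,\varphi=p^d}$, so the descent is automatic and no $\cur{E}^\dagger$-level functoriality needs to be checked at all.
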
 

\begin{proof} Note that since $H_{2d}^{\mathrm{BM},\rig}(X/\cur{E}^\dagger)^{\nabla=0,\varphi=p^d}\subset H_{2d}^{\mathrm{BM},\rig}(X/\cur{E})$ it suffices to show that for every integral closed subscheme $Z\subset X$ of dimension $d$, the cycle class $\eta(Z)\in H_{2d}^{\mathrm{BM},\rig}(X/\cur{E})$ actually lies in the subspace $H_{2d}^{\mathrm{BM},\rig}(X/\cur{E}^\dagger)^{\nabla=0,\varphi=p^d}$. 

By construction, $\eta(Z)$ is the image of the fundamental class of $Z$ (i.e. the trace map $\mathrm{Tr}_Z: H^{2d}_{c,\rig}(Z/\cur{E})\rightarrow \cur{E}(-d)$) under the map
\[ H_{2d}^{\mathrm{BM},\rig}(Z/\cur{E}) \rightarrow H_{2d}^{\mathrm{BM},\rig}(X/\cur{E}) \]
induced by the natural map $H^{2d}_{c,\rig}(X/\cur{E})\rightarrow H^{2d}_{c,\rig}(Z/\cur{E})$ in compactly supported cohomology. Hence it suffices to simply observe that both this map and the trace map descend to horizontal, Frobenius equivariant maps on the level of $\cur{E}^\dagger$-valued cohomology. Alternatively, we could observe that both $H^{2d}_{c,\rig}(X/\cur{E})\rightarrow H^{2d}_{c,\rig}(Z/\cur{E})$ and $\mathrm{Tr}_Z$ are horizontal and Frobenius equivariant at the level of $\cur{E}$-valued cohomology, which gives
\[  A_d(X)\rightarrow H_{2d}^{\mathrm{BM},\rig}(X/\cur{E})^{\nabla=0,\varphi=p^d}, \]
then applying Kedlaya's full faithfulness theorem \cite[Theorem 5.1]{Ked04c} gives an isomorphism 
\[ H_{2d}^{\mathrm{BM},\rig}(X/\cur{E})^{\nabla=0,\varphi=p^d}\cong  H_{2d}^{\mathrm{BM},\rig}(X/\cur{E}^\dagger)^{\nabla=0,\varphi=p^d}.\]
\end{proof}

\section{Preliminaries on the de Rham--Witt complex}

The purpose of this section is to gather together some results we will need on the various de\thinspace Rham--Witt complexes that will be used throughout the article. These are all generalisations to the logarithmic case of well-known results from \cite{Ill79}, and should therefore present no surprises. The reader will not lose too much by skimming this section on first reading and referring back to the results as needed.

We will, as throughout, fix a perfect ground field $k$ of characteristic $p>0$, all (log)-schemes will be considered over $k$. Given a morphism $(Y,N)\rightarrow (S,L)$ of fine log schemes over $k$, Matsuue in \cite{Mat17} constructed a relative logarithmic de\thinspace Rham--Witt complex $W_\bu\omega^*_{(Y,N)/(S,L)}$, denoted $W_\bu\Lambda^*_{(Y,N)/(S,L)}$ in \cite{Mat17}. This is an \'etale sheaf on $Y$ equipped with operators $F,V$ satisfying all the usual relations (see for example \cite[Definition 3.4(v)]{Mat17}) and which specialises to various previous constructions in particular cases.
\begin{enumerate} \item When $S=\spec{k}$ and the log structures $L$ and $N$ are trivial, then this gives the (canonical extension of the) classical de\thinspace Rham--Witt complex $W_\bu\Omega^*_{Y}$ (to an \'etale sheaf on $Y$).
\item More generally, when the morphism $(Y,N)\rightarrow (S,L)$ is strict, it recovers the relative de\thinspace Rham--Witt complex $W_\bu\Omega^*_{Y/S}$ of Langer and Zink \cite{LZ04}.
\item When the base $(S,L)$ is the scheme $\spec{k}$ with the log structure of the punctured point, and $(Y,N)$ is of semistable type (i.e. \'etale locally \'etale over $k[x_1,\ldots,x_{d+1}]/(x_1\cdots x_c)$ with the canonical log structure) then Matsuue's complex is isomorphic the logarithmic de Rham--Witt complex $W\omega^*_{Y}$ studied in \cite{HK94}.
\item If we take $(Y,N)$ semistable but instead equip $\spec{k}$ with the trivial log structure, the resulting complex is isomorphic to the one denoted $W\tilde{\omega}^*_Y$ in \cite{HK94}.
\end{enumerate}

If we are given a morphism of log schemes $(Y,N)\rightarrow (S,L)$ over $k$, then as in \cite[\S2.2]{Mat17} we can lift the log structure $N\rightarrow \cur{O}_Y$ to a log structure $W_rN\rightarrow W_r\cur{O}_Y$, where by definition $W_rN=N\oplus \ker \left( (W_r\cur{O}_Y)^* \rightarrow \cur{O}_Y^* \right)$ and the map $N\rightarrow W_r\cur{O}_Y$ is the Techm\"uller lift of $N\rightarrow \cur{O}_Y$. Since $W_r\omega^1_{(Y,N)/(S,L)}$ is a quotient of the pd-log de\thinspace Rham complex $\breve{\omega}^*_{(W_rY,W_rN)/(W_rS,W_rL)}$ (see \cite[\S3.4]{Mat17}) there is a natural map $d\log : W_rN \rightarrow W_r\omega^1_{(Y,N)/(S,L)}$ and hence we obtain maps
\[ d\log : N^\mathrm{gp} \rightarrow W_r\omega^1_{(Y,N)/(S,L)} \]
which are compatible as $r$ varies. We let $W_r\omega^1_{(Y,N)/(S,L),\log}$ denote the image.

When both log structures are trivial, and $Y\rightarrow \spec{k}$ is smooth, then \cite[Proposition I.3.23.2]{Ill79} says that $d\log$ induces an exact sequence
\[ 1 \rightarrow \left(\cur{O}_{Y}^*\right)^{p^r} \rightarrow \cur{O}_Y^*\rightarrow W_r\Omega^1_{Y,\log}\rightarrow 0,\]
and our first task in this section to obtain an analogue of this result for semistable log schemes over $k$. In fact, since we will really only be interested in the case when $Y$ arises as the special fibre of a semistable scheme over $k\pow{t}$, we will only treat this special case.

We will therefore let $\cur X$ denote a semistable scheme over $R=k\pow{t}$ (not necessarily proper). We will let $L$ denote the log structure given by the closed point of $\spec{R}$, and write $R^\times=(R,L)$. We will denote by $L_n$ the inverse image log structure on $R_n=k\pow{t}/(t^{n+1})$, and write $R_n^\times=(R_n,L_n)$. We will also write $k^\times=(k,L_0)$. We will denote by $M$ the log structure on $\cur X$ given by the special fibre, and write $\cur{X}^\times=(\cur{X},M)$. Similarly we have log structures $M_n$ on $X_n=\cur{X} \otimes_R R_n$, and we will write $X_n^\times=(X_n,M_n)$. Finally, when considering the logarithmic de\thinspace Rham--Witt complex relative to $k$ (with the trivial log structure) we will drop $k$ from the notation, e.g. we will write $W_r\omega^*_{X_0^\times}$ instead of $W_r\omega^*_{X_0^\times/k}$.

\begin{proposition} \label{fesl1} The sequence
\[ 0\rightarrow p^rM_0^\mathrm{gp} \rightarrow M_0^\mathrm{gp}\overset{d\log}{\longrightarrow} W_r\omega^1_{X_0^\times,\log} \rightarrow 0 \]
is exact.
\end{proposition}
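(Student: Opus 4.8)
The plan is to follow closely the proof of the non-logarithmic statement \cite[Prop.~I.3.23.2]{Ill79}, the only genuinely new inputs being the logarithmic Cartier isomorphism of \cite{HK94, Mat16} and the $p$-torsion-freeness of $M_0^{\mathrm{gp}}$. Surjectivity of $d\log$ is immediate, since $W_r\omega^1_{X_0^\times,\log}$ is \emph{defined} to be its image. Exactness on the left amounts to saying that $M_0^{\mathrm{gp}}$ has no $p$-torsion: étale locally on $X_0$ there is a short exact sequence $0\to\cur{O}_{X_0}^*\to M_0^{\mathrm{gp}}\to \overline{M}_0^{\mathrm{gp}}\to 0$ in which the characteristic sheaf $\overline{M}_0=M_0/\cur{O}_{X_0}^*$ is étale locally of the form $\N^j$, so that $\overline{M}_0^{\mathrm{gp}}$ is a sheaf of free abelian groups, while $\cur{O}_{X_0}^*$ has no $p$-torsion since $X_0$ is reduced (being the special fibre of a semistable scheme) and $\mathrm{char}\,k=p$; hence neither does $M_0^{\mathrm{gp}}$. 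For the inclusion $p^rM_0^{\mathrm{gp}}\subseteq\ker(d\log)$ it is enough to note that $W_r\omega^1_{X_0^\times}$ is a module over $W_r(\cur{O}_{X_0})$, which is a $W_r(\F_p)=\Z/p^r$-algebra, so $p^r\cdot d\log(m)=d\log(p^rm)=0$ for every local section $m$.

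The heart of the matter is the reverse inclusion $\ker(d\log)\subseteq p^rM_0^{\mathrm{gp}}$, equivalently the injectivity of the induced surjection $\phi_r\colon M_0^{\mathrm{gp}}/p^r\twoheadrightarrow W_r\omega^1_{X_0^\times,\log}$, which I would prove by induction on $r$. The case $r=1$ is a reformulation of the logarithmic Cartier isomorphism: working étale locally with the standard model, so that $X_0=\spec{k[x_1,\ldots,x_n]/(x_1\cdots x_c)}$ with $M_0^{\mathrm{gp}}$ locally $\cur{O}_{X_0}^*\oplus\bigoplus_{i=1}^c\Z\cdot x_i$ and $\omega^1_{X_0^\times}=\omega^1_{X_0^\times/k}$ free on $d\log x_1,\ldots,d\log x_c, dx_{c+1},\ldots,dx_n$, one checks that (a) $\ker\bigl(d\colon\cur{O}_{X_0}\to\omega^1_{X_0^\times/k}\bigr)$ is the image of Frobenius — the degree-zero part of the logarithmic Cartier isomorphism \cite{HK94, Mat16} — so a unit killed by $d\log$ is a $p$-th power, and (b) the classes $d\log x_1,\ldots,d\log x_c$ remain $\F_p$-linearly independent modulo $d\cur{O}_{X_0}$; together these give $\ker(d\log\colon M_0^{\mathrm{gp}}\to\omega^1_{X_0^\times})=pM_0^{\mathrm{gp}}$. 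For the inductive step, the restriction maps fit into a commutative diagram with exact rows
\[
\begin{array}{ccccccccc}
0 & \to & M_0^{\mathrm{gp}}/p & \xrightarrow{\ p^{r-1}\ } & M_0^{\mathrm{gp}}/p^r & \to & M_0^{\mathrm{gp}}/p^{r-1} & \to & 0 \\
 & & \downarrow\phi_1 & & \downarrow\phi_r & & \downarrow\phi_{r-1} & & \\
0 & \to & \omega^1_{X_0^\times,\log} & \xrightarrow{\ V^{r-1}\ } & W_r\omega^1_{X_0^\times,\log} & \to & W_{r-1}\omega^1_{X_0^\times,\log} & \to & 0
\end{array}
\]
where the left square commutes because $V^{r-1}(d\log m)=p^{r-1}d\log m$ in $W_r\omega^1_{X_0^\times}$ (which follows from $F\circ d\log=d\log$ together with $V^{r-1}(1)=p^{r-1}$). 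Granting the exactness of the bottom row, the five lemma promotes the isomorphy of $\phi_1$ and $\phi_{r-1}$ to that of $\phi_r$.

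The main obstacle — the only place where real work beyond bookkeeping is needed — is assembling the logarithmic counterparts of the structural results of \cite[\S I.3]{Ill79} used above, principally the exactness of the bottom row $0\to\omega^1_{X_0^\times,\log}\xrightarrow{V^{r-1}}W_r\omega^1_{X_0^\times,\log}\to W_{r-1}\omega^1_{X_0^\times,\log}\to 0$, which rests on a description of the kernel of the restriction map $W_r\omega^1_{X_0^\times}\to W_{r-1}\omega^1_{X_0^\times}$ and on the behaviour of $d\log$ classes under $V$, $F$ and restriction. All of this should be extractable from \cite{Mat16} or provable exactly as in \cite{Ill79}, but it has to be spelled out; alternatively one can bypass the induction entirely by proving directly in the explicit semistable model that $d\log\colon M_0^{\mathrm{gp}}/p^r\to W_r\omega^1_{X_0^\times}$ is injective, via a computation with Witt vectors of $k[x_1,\ldots,x_n]/(x_1\cdots x_c)$.
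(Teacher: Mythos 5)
Your outline matches the paper's proof quite closely. The easy parts (surjectivity and the complex property by definition of $W_r\omega^1_{X_0^\times,\log}$ and the fact that $p^r$ kills it) are handled identically, and your case $r=1$ is essentially the paper's argument: working \'etale locally on the standard semistable model, one first shows the exponents $n_i$ of the $x_i$ in a local section $n=u\prod_i x_i^{n_i}$ of $M_0^{\mathrm{gp}}$ killed by $d\log$ are divisible by $p$ (because the $d\log x_i$-coefficients of $d\log u$ lie in $x_iA$ --- this is the precise form of your claim (b); ``independent modulo $d\cur{O}_{X_0}$'' is not quite the right quotient), and then invokes the logarithmic Cartier theory for schemes of Cartier type (\cite[Theorem 4.12]{Kat89}) to conclude that a unit with $du=0$ is a $p$-th power, using perfectness of $k$. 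Your inductive step is packaged differently --- a five-lemma argument against the sequence $0\to\omega^1_{X_0^\times,\log}\xrightarrow{V^{r-1}}W_r\omega^1_{X_0^\times,\log}\to W_{r-1}\omega^1_{X_0^\times,\log}\to 0$ rather than the paper's direct descent ($d\log n=0$ in $W_{r-1}$ gives $n=p^{r-1}n_1$ by induction, then one must show $p^{r-1}d\log n_1=0$ in $W_r\omega^1_{X_0^\times}$ forces $d\log n_1=0$ in $\omega^1_{X_0^\times}$) --- but a short diagram chase shows your version reduces to exactly the same claim, namely the injectivity of $V^{r-1}$ on $\omega^1_{X_0^\times,\log}$.

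That claim is the one substantive step you have deferred (``should be extractable \dots but it has to be spelled out''), and as written this is a gap: it is precisely the point where the semistable (non-smooth) nature of $X_0$ could cause trouble, since the structure theory of \cite[\S I.3]{Ill79} is proved only for smooth schemes. The paper closes it with a short but genuine argument: since $\omega^1_{X_0^\times}$ is a locally free $\cur{O}_{X_0}$-module and $X_0$ is reduced, it suffices to check the vanishing of $d\log n_1$ on a dense open, so one may restrict to the smooth locus of $X_0$, where the log structure splits off a $\N$-factor; there the statement follows from the classical \cite[Proposition I.3.4]{Ill79} combined with \cite[Lemma 7.4]{Mat16}. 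You should either supply this reduction-to-the-smooth-locus argument or an explicit Witt-vector computation in the standard model; without one of these the induction does not close.
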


\begin{proof} The surjectivity of the right hand map and the injectivity of the left hand map are by definition, and since $p^rW_r\omega^1_{X_0^\times,\log}=0$, the sequence is clearly a complex. The key point is then to show exactness in the middle. So suppose that we are given $m\in M_0^\mathrm{gp}$ is such that $d\log m=0$. We will show that $m\in p^rM_0^\mathrm{gp}$ by induction on $r$.

When $r=1$ we note that the claim is \'etale local, we may therefore assume $X_0^\times$ to be affine, \'etale and strict over $\spec{\frac{k[x_1,\ldots,x_d]}{(x_1\cdots x_c)}}$, say $X_0=\spec{A}$. We have
\[ \omega^1_{(A,\N^c)} \cong \bigoplus_{i=1}^c A\cdot d\log x_i \oplus \bigoplus_{i=c+1}^{d}A\cdot dx_i.\]
Now suppose that we are given a local section $m=u\prod_{i=1}^c x_i^{n_i}$ of $M_0^\mathrm{gp}$ for $u\in A^*$ and $n_i\in\Z$. Write
\[d\log u = \sum_{i=1}^c a_i d\log x_i + \sum_{i=c+1}^d a_i dx_i \]
with $a_i\in A$, note that since $d\log u$ actually comes from an element of $\Omega^1_{A}$ it follows that $a_i\in x_iA$ for $1\leq i\leq c$. In particular, we have $n_i=-x_ib_i$ for $1\leq i\leq c$ and some $b_i\in A$; passing to $A/x_iA$ it therefore follows that $n_i=0$ in $k$. Hence each $n_i$ is divisible by $p$. It follows that $\prod_{i=1}^{c}x_i^{n_i}$ is in $pM_0^\mathrm{gp}$, and its $d\log$ vanishes. By dividing by this element we may therefore assume that $m=u\in A^*$. Since semistable schemes are of Cartier type, we may apply \cite[Theorem 4.12]{Kat89}, which tells us that (\'etale locally) $u\in A^{(p)*}$ (since $d\log u=0\Rightarrow du=0$). Since $k$ is perfect, $A^{(p)*}=(A^*)^p$ and we may conclude.

When $r>1$ and $d\log m=0\in W_r\omega^1_{X_0^\times,\log}$, then in particular $d\log m=0 \in W_{r-1}\omega^1_{X_0^\times}$; hence by applying the induction hypothesis we obtain $m=p^{r-1}m_1$. But now this implies that $p^{r-1}d\log m_1=0\in W_r\omega^1_{X_0^\times}$, we claim that in fact it follows that $d\log m_1=0\in \omega^1_{X_0^\times}$. Indeed, since $\omega^1_{X_0^\times}$ is a locally free $\cur{O}_{X_0}$-module, to prove that a section vanishes it suffices to show that it does so on a dense open subscheme. In particular, by restricting to the smooth locus of $X_0$ we can assume that $X_0$ is smooth and the log structure is given by $\cur{O}_{X_0}^*\oplus \N$, $(u,n)\mapsto u.0^n$. We now apply \cite[Proposition I.3.4]{Ill79} and \cite[Lemma 7.4]{Mat17} to conclude that $d\log m_1=0$ as required. Thus applying the case $r=1$ finishes the proof.
\end{proof}

The following is analogous to \cite[Corollaire I.3.27]{Ill79}.

\begin{proposition} \label{omfe}The sequences of pro-sheaves
\begin{align*} 0 \rightarrow \left\{ W_r\omega^1_{\cur{X}^{\times},\log} \right\}_r \rightarrow &\left\{ W_r\omega^1_{\cur{X}^{\times}} \right\}_r \overset{1-F}{\rightarrow} \left\{ W_r\omega^1_{\cur{X}^{\times}} \right\}_r \rightarrow 0 ,\\
0 \rightarrow \left\{ W_r\omega^1_{X_0^\times/k^\times,\log} \right\}_r \rightarrow &\left\{ W_r\omega^1_{X_0^\times/k^\times} \right\}_r \overset{1-F}{\rightarrow} \left\{ W_r\omega^1_{X_0^\times/k^\times} \right\}_r \rightarrow 0
\end{align*}
are exact.
\end{proposition}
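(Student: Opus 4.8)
The plan is to follow the proof of \cite[Corollaire~I.3.27]{Ill79} essentially verbatim, substituting Proposition~\ref{fesl1} and Matsuue's structural results on the relative logarithmic de\thinspace Rham--Witt complex for the analogous facts about the classical complex wherever Illusie uses them. The two sequences are proved in the same way — the second being relative to the punctured point $k^\times$ in place of $\spec{k}$ — so I will treat only the first. Everything is compatible with étale localisation on $\cur X$, so I may assume $\cur X = \spec A$ with $A$ étale over $k\pow{t}[x_1,\ldots,x_d]/(x_1\cdots x_c - t)$ with its canonical log structure; in particular the de\thinspace Rham--Witt sheaves in question have the explicit local description coming from the semistable-type model, and $\omega^1_{\cur X^\times}$ decomposes as a direct sum of $\cur O$-copies along $d\log x_i$ ($1\le i\le c$) and along $dx_i$ ($i>c$), as in the proof of Proposition~\ref{fesl1}.

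That each sequence is a complex with injective left-hand map is immediate: $W_r\omega^1_{\cur X^\times,\log}$ is by definition the image of $d\log\colon M^{\mathrm{gp}}\to W_r\omega^1_{\cur X^\times}$, and the identity $F\,d\log m = d\log m$ (a basic property of Matsuue's complex, see \cite{Mat16}) shows $1-F$ kills it. Surjectivity of $1-F$ as a map of pro-sheaves I would obtain exactly as in \cite[I.3.28]{Ill79}: with respect to the canonical $V$-adic filtration on $W_r\omega^1_{\cur X^\times}$, built from copies of $\cur O_{X_0}$ and the standard generators $dV^j$ and $F^jdV^j$ just as in \cite{LZ04}, the Frobenius $F$ becomes an Artin--Schreier/Cartier-type operator on the bottom graded piece and strictly lowers the filtration on the remaining pieces, hence is pro-nilpotent in the relevant sense, so that $1-F$ admits the pro-inverse $\sum_{j\ge 0}F^j$ on the étale site.

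It remains to show that the pro-kernel of $1-F$ is exactly $\{W_r\omega^1_{\cur X^\times,\log}\}_r$, and this is the heart of the matter. I would induct on $r$. The base case asserts that the $F$-fixed part of $\omega^1_{\cur X^\times}$ is $\omega^1_{\cur X^\times,\log}$; it follows from the logarithmic Cartier isomorphism \cite[Theorem~4.12]{Kat89} — already invoked in the proof of Proposition~\ref{fesl1} — exactly as the classical Artin--Schreier sequence follows from the Cartier operator, the $d\log x_i$ directions being fixed and the computation of Proposition~\ref{fesl1} excluding any $dx_i$ component. For the inductive step, if $(1-F)\omega = 0$ in $W_r\omega^1_{\cur X^\times}$, then reducing modulo $VW_{r-1}\omega^1_{\cur X^\times}$ and applying the base case places $\omega$ in $W_r\omega^1_{\cur X^\times,\log} + VW_{r-1}\omega^1_{\cur X^\times}$; subtracting a $d\log$ term and using $FV = p$ and $FdV = d$ reduces the problem to an element of $VW_{r-1}\omega^1_{\cur X^\times}$ killed by the analogous operator on $W_{r-1}\omega^1_{\cur X^\times}$, to which the induction hypothesis applies, with Proposition~\ref{fesl1} (and its analogue over $k^\times$) certifying at each stage that the log classes produced lie in $W_r\omega^1_{\cur X^\times,\log}$. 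The real difficulty is the bookkeeping: keeping track of how the monodromy direction $d\log t$ (equivalently, the passage between the complexes relative to $k$ and to $k^\times$) interacts with the de\thinspace Rham--Witt Frobenius as one peels off successive $V$-layers, so as to be certain that the ``extra'' part of the kernel is accounted for by $W_r\omega^1_{\cur X^\times,\log}$ and nothing larger; the surrounding filtration and graded-piece computations are routine transcriptions of \cite{Ill79} and \cite{LZ04} to the relative logarithmic setting of \cite{Mat16}.
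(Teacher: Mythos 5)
There are genuine gaps here, and the first is structural. For the first sequence the sheaf $W_r\omega^1_{\cur{X}^{\times}}$ is the logarithmic de\thinspace Rham--Witt complex of $\cur{X}$ relative to $\spec{k}$ with its \emph{trivial} log structure, and $\cur{X}$ is a $k\pow{t}$-scheme, hence not of finite type over $k$. Your proposed \'etale-local model $A$ \'etale over $k\pow{t}[x_1,\ldots,x_d]/(x_1\cdots x_c-t)$, with $\omega^1_{\cur X^\times}$ a finite direct sum of copies of $\cur{O}$ along the $d\log x_i$ and $dx_i$, does not exist for the absolute complex: already $\Omega^1_{k\pow{t}/k}$ is not finitely generated, so none of the explicit basis/$V$-filtration structure theory of \cite{LZ04} and \cite{Mat16} that your entire argument leans on is available in this setting, and neither is the Cartier-type hypothesis needed for your base case via \cite[Theorem 4.12]{Kat89}. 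The paper's first move is precisely to fix this: N\'eron--Popescu desingularisation writes $k\pow{t}$ as a filtered colimit of smooth $k$-algebras, and since the log de\thinspace Rham--Witt complex commutes with filtered colimits one reduces to a smooth finite-type $k$-scheme $Y$ with a normal crossings divisor $D$. This reduction is not optional bookkeeping; without it the local computation you begin with is meaningless. Note also that the two sequences are not ``the same proof over a different base'': the first concerns the absolute complex of the total space, the second the relative complex of the finite-type special fibre over the log point, and the paper treats them with two different residue sequences.

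Second, your surjectivity argument is internally inconsistent: if $1-F$ admitted the pro-inverse $\sum_{j\ge 0}F^j$ its kernel would vanish, contradicting the exactness you are trying to prove (the kernel is the nonzero log part, and indeed $F\,d\log x_i=d\log x_i$ shows $F$ is not pro-nilpotent in any relevant sense). You are conflating the argument for $1-V$ on $\{W_r\cur{O}\}_r$, which genuinely is invertible, with that for $1-F$ on $\{W_r\omega^1\}_r$, which is surjective with nontrivial kernel. Finally, the exactness in the middle --- which you correctly identify as the heart of the matter --- is exactly the part you leave as ``bookkeeping''. The paper's device for both surjectivity and middle-exactness is the residue exact sequences
\[ 0 \rightarrow W_r\Omega^1_Y \rightarrow W_r\omega^1_{(Y,N)} \rightarrow \textstyle\bigoplus_{i} W_r\cur{O}_{D_i} \rightarrow 0, \qquad 0 \rightarrow \textstyle\bigoplus_i W_r\Omega^1_{D_i} \rightarrow  W_r\omega^1_{X_0^\times/k^\times} \rightarrow \textstyle\bigoplus_{ij} W_r\cur{O}_{D_{ij}}\rightarrow  0 \]
from \cite[\S9, Lemma 8.4]{Mat16}, which sandwich the logarithmic sheaf between classical de\thinspace Rham--Witt sheaves of smooth varieties; one then quotes \cite[Propositions I.3.26, I.3.28, Corollaire I.3.27]{Ill79} as black boxes, corrects by an element of $d\log(N^{\mathrm{gp}})$ using the residue computation, and finishes with a diagram chase. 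Some such d\'evissage is needed: a direct transcription of Illusie's induction would force you to re-establish the structure of the graded pieces and the behaviour of $1-F$ in the $d\log x_i$ directions, which is precisely what you defer.
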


\begin{proof} Let us consider the first sequence. We will apply N\'eron--Popescu desingularisation \cite[Theorem 1.8]{Pop86} to write $\mathcal{X}$ as a cofiltered limit $\mathcal{X}=\varprojlim_{\alpha\in A} X^\alpha$ of schemes $X^\alpha$ which are smooth over $k$. Moreover, after possibly changing the indexing category $A$ we may assume that there exist closed subschemes $D^\alpha \subset X^\alpha$ such that:
\begin{itemize}
\item $D^\beta= D^\alpha\times_{X^\alpha} X^\beta$ for all $\beta\rightarrow \alpha$,
\item $X_0=D^\alpha\times_{X^\alpha}\mathcal{X}$ for all $\alpha$.
\end{itemize}
Again, after possible changing the index category $A$ we may assume that each $D^\alpha\subset X^\alpha$ is a normal crossings divisor. Thus using the fact that the logarithmic de\thinspace Rham--Witt complex commutes with filtered colimits, we may reduce to considering the analogous question for $Y$ smooth over $k$ with log structure $N$ coming from a normal crossings divisor $D\subset Y$. The claim is \'etale local, we may therefore assume that $Y$ is \'etale over $k[x_1,\ldots,x_n]$ with $D$ the inverse image of $\{ x_1\cdots x_c=0 \} $. Locally, $N$ is generated by $\cur{O}_{Y}^*$ and $x_i$ for $1\leq i\leq c$, so in order to see that the sequence is a complex, or in other words that $(1-F)(d\log n)=0$, it suffices to check that $(1-F)(d\log x_i)=0$. This is a straightforward calculation. For the surjectivity of $1-F$ we claim in fact that
\[ 1-F: W_{r+1}\omega^1_{(Y,N)} \rightarrow W_{r}\omega^1_{(Y,N)}\]
is surjective. For this we note that by \cite[\S9]{Mat17} there exists an exact sequence
\[ 0 \rightarrow W_r\Omega^1_Y \rightarrow W_r\omega^1_{(Y,N)} \rightarrow \bigoplus_{i=1}^c W_r\cur{O}_{D_i}\cdot d\log x_i \rightarrow 0\]
for all $r$, where $D_i$ are the irreducible components of $D$. Denote the induced map $W_r\omega^1_{(Y,N)} \rightarrow  W_r\cur{O}_{D_i}$ by $\mathrm{Res}_i$. (In fact, it is assumed in \cite[\S9]{Mat17} that $Y$ is proper over $k$, however, the \emph{proof} of the exactness in \cite[\S8.2]{Mat17} is local, and therefore works equally well in the non-proper case.) Since $(1-F)(d\log x_i)=0$ it follows that we have the commutative diagram
\[ \xymatrix{ 0 \ar[r] & W_{r+1}\Omega^1_Y \ar[r]\ar[d]_{1-F} & W_{r+1} \omega^1_{(Y,N)} \ar[r]\ar[d]_{1-F} & \bigoplus_{i=1}^c W_{r+1} \cur{O}_{D_i} \ar[r]\ar[d]_{1-F} & 0 \\ 0 \ar[r] & W_{r}\Omega^1_Y \ar[r] & W_{r} \omega^1_{(Y,N)} \ar[r] & \bigoplus_{i=1}^c W_{r} \cur{O}_{D_i} \ar[r] & 0 } \]
where $W_r\Omega^1_Y$ is the usual (non-logarithmic) de\thinspace Rham--Witt complex of $Y$. It therefore suffices to apply \cite[Propositions I.3.26, I.3.28]{Ill79}, stating that the left and right vertical maps are surjective. Finally, to show exactness in the middle, suppose that we are given $\omega\in W_{r+1} \omega^1_{(Y,N)}$ such that $(1-F)(\omega)=0$. Then applying \cite[Proposition I.3.28]{Ill79} we can see that
\[ \mathrm{Res}_i(\omega)\in \Z/p^{r+1}\Z + \ker\left(W_{r+1} \cur{O}_{D_i}\rightarrow  W_{r} \cur{O}_{D_i} \right) \]
for all $i$. Hence after subtracting off an element of $d\log(N^\mathrm{gp})$ we may assume that in fact
\[ \omega\in W_{r+1}\Omega^1_Y+ \ker\left( W_{r+1} \omega^1_{(Y,N)}\rightarrow W_{r} \omega^1_{(Y,N)}\right) .\]
Now applying \cite[Corollaire I.3.27]{Ill79} tells us that 
\[ \omega\in d\log(N^\mathrm{gp}) +\ker\left( W_{r+1} \omega^1_{(Y,N)}\rightarrow W_{r} \omega^1_{(Y,N)}\right) \]
and hence the given sequence of pro-sheaves is exact in the middle.

For the second sequence, the surjectivity of $1-F$ follows from the corresponding claim for the first sequence, since sections of $W_r\omega^1_{X_0^\times/k^\times}$ can be lifted locally to $W_r\omega^1_{\cur{X}^\times}$. We may also argue \'etale locally; assuming that $X_0^\times$ is \'etale and strict over $ \spec{\N^c\rightarrow \frac{k[x_1,\ldots,x_d]}{(x_1\cdots x_c)}}$. The fact that the claimed sequence is a complex follows again from observing that $(1-F)(d\log x_i)=0$ for $1\leq i\leq c$. To see exactness in the middle we use the fact that (again working \'etale locally) we have an exact sequence
\[ 0 \rightarrow \bigoplus_i W_r\Omega^1_{D_i} \rightarrow  W_r\omega^1_{X_0^\times/k^\times} \rightarrow \bigoplus_{ij} W_r\cur{O}_{D_{ij}}\rightarrow  0 \]
by \cite[Lemma 8.4]{Mat17}, where $D_i$ are the irreducible components of $X_0^\times$ and $D_{ij}$ their intersections. Moreover, this fits into a diagram
\[ \xymatrix{  0 \ar[r] & \bigoplus_i \cur{O}_{D_i}^* \ar[r] \ar[d]_{d\log} & M_0^\mathrm{gp} \ar[r]\ar[d]_{d\log} & \bigoplus_{ij} \underline{\Z}_{D_{ij}} \ar[r]\ar[d] & 0 \\
0 \ar[r] &  \bigoplus_i W_r\Omega^1_{D_i} \ar[r]\ar[d]_{1-F} & W_r\omega^1_{X_0^\times/k^\times} \ar[r]\ar[d]_{1-F} & \bigoplus_{ij} W_r\cur{O}_{D_{ij}} \ar[r]\ar[d]_{1-F} & 0 \\
0 \ar[r] &  \bigoplus_i W_{r-1}\Omega^1_{D_i} \ar[r] & W_{r-1}\omega^1_{X_0^\times/k^\times} \ar[r] & \bigoplus_{ij} W_{r-1}\cur{O}_{D_{ij}} \ar[r] & 0 } \]
with exact rows. Exactness of the middle vertical sequence at $W_r\omega^1_{X_0^\times/k^\times}$ now follows from the classical result \cite[Corollaire I.3.27, Proposition I.3.28]{Ill79} and a simple diagram chase.
\end{proof}

Next, we will need to understand the kernel of $W_r\omega^1_{X_0^\times,\log}\rightarrow W_r\omega^1_{X_0^\times/k^\times,\log}$.

\begin{lemma} \label{relnonrel} For all $r\geq1$ the sequence
\[ 0 \rightarrow \frac{\Z}{p^r\Z} \wedge d\log t \rightarrow W_r\omega^1_{X_0^\times,\log}\rightarrow W_r\omega^1_{X_0^\times/k^\times,\log} \rightarrow 0\]
is exact.
\end{lemma}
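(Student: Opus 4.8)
The plan is to deduce the lemma from Proposition~\ref{fesl1} together with a relative version of it, via the snake lemma. Several parts are immediate. Both $W_r\omega^1_{X_0^\times,\log}$ and $W_r\omega^1_{X_0^\times/k^\times,\log}$ are, by definition, the images of $d\log\colon M_0^{\mathrm{gp}}\to W_r\omega^1_{X_0^\times}$ and of $d\log\colon M_0^{\mathrm{gp}}\to W_r\omega^1_{X_0^\times/k^\times}$, compatibly with the canonical surjection $W_r\omega^1_{X_0^\times}\twoheadrightarrow W_r\omega^1_{X_0^\times/k^\times}$; hence the right hand map is surjective. Since $t$ is (the image of) a section of the base log structure $L_0$, its $d\log$ vanishes in the relative complex, so the sequence is a complex. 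Finally, the left hand map is injective: by Proposition~\ref{fesl1}, $d\log(t^{j})=0$ in $W_r\omega^1_{X_0^\times,\log}$ forces $t^{j}\in p^{r}M_0^{\mathrm{gp}}$, which \'etale locally means $p^{r}\mid j$, since $t$ maps to $x_1\cdots x_c$, a primitive element of the monoid $\N^{c}\subset M_0^{\mathrm{gp}}$ in the standard local description of $X_0^\times$; thus $d\log t$ has order exactly $p^{r}$.

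The heart of the matter is exactness in the middle, and the plan is to first prove the relative analogue of Proposition~\ref{fesl1}: with $\bar M_0^{\mathrm{gp}}:=\mathrm{coker}\big(\Z\xrightarrow{1\mapsto t}M_0^{\mathrm{gp}}\big)$, the sequence
\[ 0\rightarrow p^{r}\bar M_0^{\mathrm{gp}}\rightarrow\bar M_0^{\mathrm{gp}}\overset{d\log}{\longrightarrow}W_r\omega^1_{X_0^\times/k^\times,\log}\rightarrow 0 \]
is exact. This should be a near-verbatim transcription of the proof of Proposition~\ref{fesl1}: surjectivity and the complex property are formal, and for exactness in the middle one reduces \'etale locally to the case where $X_0^\times$ is strict over $\spec{\N^{c}\to k[x_1,\dots,x_d]/(x_1\cdots x_c)}$, replaces $\N^{c}$ by the monoid $\N^{c}/\langle(1,\dots,1)\rangle$ governing the relative complex, and runs the same induction on $r$, using that semistable schemes are of Cartier type over the log point (so that \cite[Theorem 4.12]{Kat89} applies), the relative analogues of the lemmas of Matsuue used before, \cite[Proposition I.3.4]{Ill79}, and the perfectness of $k$.

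Granting this, the lemma follows by the snake lemma applied to the commutative diagram with exact rows whose top row is the sequence of Proposition~\ref{fesl1}, whose bottom row is the sequence just displayed, and whose vertical maps are the evident projections $p^{r}M_0^{\mathrm{gp}}\to p^{r}\bar M_0^{\mathrm{gp}}$, $M_0^{\mathrm{gp}}\to\bar M_0^{\mathrm{gp}}$ and $W_r\omega^1_{X_0^\times,\log}\to W_r\omega^1_{X_0^\times/k^\times,\log}$. The first two verticals are surjective with kernels $p^{r}\langle t\rangle$ and $\langle t\rangle$ respectively (again since $t$ is primitive), so the snake sequence degenerates to
\[ 0\rightarrow p^{r}\langle t\rangle\rightarrow\langle t\rangle\rightarrow\ker\!\big(W_r\omega^1_{X_0^\times,\log}\rightarrow W_r\omega^1_{X_0^\times/k^\times,\log}\big)\rightarrow 0, \]
which identifies the middle kernel with $\langle t\rangle/p^{r}\langle t\rangle\cong\frac{\Z}{p^{r}\Z}\wedge d\log t$ and at the same time reproves surjectivity on the right. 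The one step with real content is thus the relative version of Proposition~\ref{fesl1}, and I expect the de\thinspace Rham--Witt bookkeeping for $r>1$ there --- controlling which classes of the relative complex lie in the image of $d\log$ --- to be the only genuine obstacle. Alternatively, one can bypass the relative analogue and argue directly as in the proof of Proposition~\ref{fesl1}, showing in local coordinates that a class in $W_r\omega^1_{X_0^\times,\log}$ dying in the relative complex is of the form $a\cdot d\log t$ with $a$ congruent to an integer on each component $D_i$, and then using connectedness of the dual graph together with the injectivity of $W_r\cur{O}_{X_0}\hookrightarrow\prod_i W_r\cur{O}_{D_i}$ to conclude that $a$ is a locally constant integer modulo $p^{r}$.
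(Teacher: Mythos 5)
Your overall strategy --- deduce the lemma from Proposition \ref{fesl1} together with a relative analogue of it via the snake lemma --- is coherent, and your treatment of left exactness (that $d\log t$ has order exactly $p^r$, via Proposition \ref{fesl1} and the primitivity of $(1,\dots,1)$ in the local chart $\N^c$) is a point the paper leaves implicit. But the route is genuinely different from the paper's, and the one step you defer carries essentially all the content. The paper proves no relative version of Proposition \ref{fesl1}. Instead it quotes \cite[Lemma 7.4]{Mat16}, which identifies $\ker\left(W_r\omega^1_{X_0^\times}\to W_r\omega^1_{X_0^\times/k^\times}\right)$ with $W_r\cur{O}_{X_0}\wedge d\log t$, reducing everything to the question of which elements $g\wedge d\log t$ lie in $d\log(M_0)$. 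Given such a $g$, one lifts the relation to level $r+1$ modulo $\ker(W_{r+1}\to W_r)$, observes as in Proposition \ref{omfe} that $(1-F)$ kills $d\log$ classes, and concludes $g=F(g)$ in $W_r\cur{O}_{X_0}$, hence $g\in\Z/p^r\Z$. This is coordinate-free and a few lines long; it is much closer in spirit to the ``alternative'' you sketch in your last sentence than to your main proposal.

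The gap in your main route is that the relative analogue of Proposition \ref{fesl1} is not a near-verbatim transcription, and your reduction relocates rather than removes the difficulty: by the nine lemma, given Proposition \ref{fesl1} the relative statement and Lemma \ref{relnonrel} are essentially equivalent, so asserting the former with a one-sentence sketch proves nothing new. Concretely, already for $r=1$ the local computation changes substantively: in $\omega^1_{X_0^\times/k^\times}$ there is the extra relation $\sum_{i=1}^c d\log x_i=0$, so from $d\log(\bar u\prod_i x_i^{n_i})=0$ one only gets $n_i\equiv a \pmod{x_iA}$ for a single unknown $a\in A$; one must then compare the $n_i$ across components (using that the components through a given geometric point actually meet there), reduce to $u\in A^*$ with $d\log u\in A\cdot d\log t$, and invoke a relative Cartier-type argument rather than \cite[Theorem 4.12]{Kat89} as used verbatim. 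None of this is fatal --- and the smooth-locus reduction for the $r>1$ induction still works, since $X_0^\times\to k^\times$ is strict over the smooth locus --- but as written the key step is asserted, not proven, and is harder than advertised.
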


\begin{proof} Note that by \cite[Lemma 7.4]{Mat17} it suffices to show that 
\[ d\log(M_0) \cap W_r\cur{O}_{X_0}\wedge d\log t =  \frac{\Z}{p^r\Z}\wedge d\log t\]
inside $W_r\omega^1_{X_0^\times}$, the inclusion $\supset$ is clear. For the other, suppose that we are given an element of the form $g \wedge d\log t\in W_r\omega^1_{X_0^\times}$ which is in the image of $d\log$. Then we know that $\tilde{g}\wedge d\log t=d\log n+c$ in $W_{r+1}\omega^1_{X_0^\times}$, for some $c\in \ker \left( W_{r+1}\omega^1_{X_0^\times} \rightarrow W_{r}\omega^1_{X_0^\times}\right)$ and $\tilde{g}\in W_{r+1}\cur{O}_{X_0}$ lifting $g$. Arguing as in Proposition \ref{omfe} above we can see that $(1-F)(d\log n)=0$, and again applying \cite[Lemma 7.4]{Mat17} we can deduce that in fact $g=F(g)$ in $W_r\cur{O}_{X_0}$. Hence $g\in \Z/p^r\Z$ as claimed.
\end{proof}

Finally, we will need to know that the logarithmic de\thinspace Rham--Witt complex computes the log crystalline cohomology of the semistable scheme $\cur{X}$. To do so, we need to construct a suitable comparison morphism
\[ \mathbf{R}u_{\mathcal{X}^\times/W*} \mathcal{O}_{\mathcal{X}^\times/W}^\cris \isomto W\omega^*_{\mathcal{X}^\times}, \]
where $u_{\mathcal{X}^\times/W}: (\mathcal{X}^\times/W)_\cris \rightarrow \mathcal{X}_\et$ is the natural projection from the log-crystalline site of $\mathcal{X}^\times/W$ to the \'etale site of $\mathcal{X}$. Unfortunately, we cannot directly appeal to the construction of \cite[\S6]{Mat17}, since $\mathcal{X}$ is not of finite type over $W$. However, we can easily get round this by exploiting the fact that the log scheme $\spec{R^\times}$ has an obvious log-$p$-smooth lift over $W$, namely the scheme $\spec{W\pow{t}}$ together with the log structure $L_W$ defined by the divisor $t=0$. We therefore take an embedding system 
\[ \xymatrix{ \mathcal{X}_\bu^\times \ar[d] \ar[r] & (\mathcal{Y}_\bu,N_\bu) \ar[d] \\ \mathcal{X}^\times \ar[r] & \left( \spec{W\pow{t}},L_W\right) } \]
for the finite type morphism of log schemes $\mathcal{X}^\times \rightarrow \left(\spec{W\pow{t}},L_W\right)$ in the sense of \cite[Definition 6.3]{Mat17}, and then simply consider $\mathcal{X}^\times, \mathcal{X}_\bu^\times$ and $(\mathcal{Y}_\bu,N_\bu)$ instead as (simplicial) log schemes over $\spec{W}$, the latter being endowed with the trivial log structure. We now proceed exactly as in \cite[\S6]{Mat17}, or \cite[\S II.1]{Ill79} to produce the required comparison morphism 
\[ \mathbf{R}u_{\mathcal{X}^\times/W*} \mathcal{O}_{\mathcal{X}^\times/W}^\cris \isomto W\omega^*_{\mathcal{X}^\times}. \]

\begin{proposition}\label{cohcomp} The induced map
\[ H^i_{\log\text{-}\mathrm{cris}}(\cur{X}^{\times}/W) \rightarrow H^i_\mathrm{cont}(\cur{X}_\et,W\omega^*_{\cur{X}^{\times}}) 
\]
on cohomology is an isomorphism, for all $i\geq 0$.
\end{proposition}

\begin{proof} It suffices to show that $H^i_{\log\text{-}\mathrm{cris}}(\cur{X}^{\times}/W_r) \isomto H^i(\cur{X}_\et,W_r\omega^*_{\cur{X}^{\times}})$ for all $r$, where $W_r=W_r(k)$. Arguing locally on $\cur X$ we may assume in fact that $\cur X$ is affine, and in particular admits a closed embedding $\cur X \hookrightarrow \cur{P}$ into some affine space over $W_r\pow{t}$. Thus if we equip $\cur P$ with the log structure coming from the (smooth) divisor defined by $t=0$, the closed immersion $\cur X \hookrightarrow \cur{P}$ can be promoted to an exact closed immersion of log schemes.

Now applying N\'eron--Popescu desingularisation \cite[Theorem 1.8]{Pop86} to $W_r[t] \rightarrow W_r\pow{t}$, we may write $\cur P =\lim_\alpha P^\alpha$ as a limit of smooth $W_r[t]$-schemes, such that:
\begin{itemize} \item there exist compatible closed subschemes $X^\alpha \subset P^\alpha$, each of whose inverse image in $\cur P$ is precisely $\cur X$, an each of which is smooth over $k$;
\item the divisors $D^\alpha := X^\alpha \cap \left\{ t=0 \right\}$, each of whose inverse image in $\cur X$ is precisely the special fibre $X_0$, have normal crossings.
\end{itemize}
Both the log de\thinspace Rham--Witt complex and \'etale cohomology commute with cofiltered limits of schemes, thus by using \cite[Theorem 7.2]{Mat17} it suffices to show that the same is true of log-crystalline cohomology, in other words that we have
\[ H^i_{\log\text{-}\mathrm{cris}}(\cur{X}^{\times}/W_r)= \mathrm{colim}_\alpha H^i_{\log\text{-}\mathrm{cris}}(X_\alpha^{\times}/W_r), \]
where $X_\alpha^{\times}$ denotes the scheme $X_\alpha$ endowed with the log structure given by $D_\alpha$. By \cite[Theorem 6.4]{Kat89}, $H^i_{\log\text{-}\mathrm{cris}}(X_\alpha^{\times}/W_r)$ is computed as the de\thinspace Rham cohomology of the log-PD envelope of $X_\alpha^\times$ inside $P_\alpha$. Since log-PD envelopes commute with cofiltered limits of schemes (i.e. filtered colimits of rings), it suffices to show that $H^i_{\log\text{-}\mathrm{cris}}(\cur{X}^{\times}/W_r)$ can be computed as the de\thinspace Rham cohomology of the log-PD envelope of $\cur{X}^\times$ inside $\cur P$.

In other words, what we require a logarithmic analogue of \cite[Theorem 1.7]{Kat91}, or equivalently a log-$p$-basis analogue of \cite[Theorem 6.4]{Kat89}. But this follows from Proposition 1.6.6 of \cite{CV15}.\end{proof}

\section{Morrow's variational Tate conjecture for divisors}

The goal of this section is to offer a simpler proof of a special case of \cite[Theorem 3.5]{Mor14} for smooth and proper schemes $\cur X$ over the power series ring $R=k\pow{t}$. This result essentially states that a line bundle on the special fibre of $\cur X$ lifts iff its its first Chern class in $H^2_\mathrm{cris}$ does, and should be viewed as an equicharacteristic analogue of Berthelot and Ogus's theorem \cite[Theorem 3.8]{BO83} stating that a line bundle on the special fibre of a smooth proper scheme over a DVR in mixed characteristic lifts iff its Chern class lies in the first piece of the Hodge filtration. We will also give a slightly different interpretation of this result that emphasises the philosophy that in equicharacteristic the `correct' analogue of a Hodge filtration is an $\cur{E}^\dagger$-structure. Our proof is simpler in that it does not depend on any results from topological cyclic homology, but only on fairly standard properties of the de\thinspace Rham--Witt complex. As such, it is more readily adaptable to the semistable case, which we shall do in \S\ref{ssv} below.

Throughout this section, $\cur X$ will be a smooth and proper $R=k\pow{t}$-scheme. Let $R_n$ denote $k\pow{t}/(t^{n+1})$ and set $X_n=\cur{X} \otimes_R R_n$. Write $X$ for the generic fibre of $\cur X$ and $\mathfrak{X}$ for its formal ($t$-adic) completion. Since all schemes in this section will have trivial log structure, we will use the notation $W_\bu\Omega^*$ for the de\thinspace Rham--Witt complex instead of $W_\bu\omega^*$. The key technical calculation we will make is the following. 

\begin{lemma} \label{d1} Fix $n\geq 0$, write $n=p^mn_0$ with $(n_0,p)=1$, and let $r=m+1$. Then the map
\[ d\log :1+t^n\cur{O}_{X_n}\rightarrow W_r\Omega^1_{X_n,\log} \]
is injective.
\end{lemma}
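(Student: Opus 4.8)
The plan is to prove the formally stronger statement that $d\log\colon 1+t^n\cur O_{X_n}\to W_r\Omega^1_{X_n}$ is injective — equivalent to the lemma, since the map factors through the subsheaf $W_r\Omega^1_{X_n,\log}$. As injectivity may be checked étale-locally on $\cur X$, I will assume $\cur X$ is étale over $\A^d_R$, so that in particular $X_0$ carries étale coordinates over $k$. Since $\cur X\to\spec R$ is flat, $t^n\cur O_{X_n}$ is a square-zero ideal (for $n\geq 1$) and $1+t^na\mapsto \bar a$ defines a group isomorphism $1+t^n\cur O_{X_n}\isomto\cur O_{X_0}$; in particular the source is killed by $p$, and it suffices to show that $d\log(1+t^na)=0$ in $W_r\Omega^1_{X_n}$ forces $\bar a=0$ in $\cur O_{X_0}$. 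I would argue by induction on $m$ (equivalently on $r$), using throughout that $W_1\Omega^1_{X_n}=\Omega^1_{X_n/k}$ and that the restriction $W_r\Omega^1_{X_n}\to W_1\Omega^1_{X_n}$ carries $d\log$ to the usual $d\log$.

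For the base case $m=0$ (so $r=1$ and $p\nmid n$) one computes directly in $\Omega^1_{X_n/k}$: using $(t^na)^2=0$ one finds $d\log(1+t^na)=nt^{n-1}a(1-t^na)\,dt+t^n\,da$, and projecting onto the first summand of the locally split decomposition $\Omega^1_{X_n/k}\cong(\Omega^1_{R_n/k}\otimes_{R_n}\cur O_{X_n})\oplus\Omega^1_{X_n/R_n}$ gives $nt^{n-1}a(1-t^na)\,dt$. Since $n$ and $1-t^na$ are units, its vanishing forces $t^{n-1}a=0$ in $\Omega^1_{R_n/k}\otimes_{R_n}\cur O_{X_n}$; a short computation with this module (distinguishing whether $p\mid n+1$) together with flatness of $\cur O_{X_n}$ over $R_n$ then yields $a\in t\cur O_{X_n}$, i.e. $\bar a=0$.

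For the inductive step $m\geq1$ (so $p\mid n$), projecting $d\log(1+t^na)$ to $\Omega^1_{X_n/R_n}$ gives $t^n\,da$, which under the flatness identification $t^n\Omega^1_{X_n/R_n}\cong\Omega^1_{X_0/k}$ is $d\bar a$; hence $d\bar a=0$ in $\Omega^1_{X_0/k}$. As $X_0/k$ is smooth and $k$ perfect, the Cartier isomorphism shows $\bar a=\bar c^p$ étale-locally, and then, for a local lift $c$ of $\bar c$, $1+t^na=1+t^nc^p=(1+t^{n/p}c)^p$ in characteristic $p$; set $v=1+t^{n/p}c\in\cur O_{X_n}^*$. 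Since $d\log$ is a homomorphism, $0=d\log(v^p)=p\cdot d\log v$ in $W_r\Omega^1_{X_n}$, and invoking the standard de\thinspace Rham--Witt relations ($p=VF$, the fact that $F$ fixes $d\log$ of a unit, and injectivity of $V$) this forces $d\log v=0$ in $W_{r-1}\Omega^1_{X_n}$. Restricting along the closed immersion $X_{n/p}\hookrightarrow X_n$, with which $d\log$ is compatible, gives $d\log(1+t^{n/p}\bar c)=0$ in $W_{r-1}\Omega^1_{X_{n/p}}$; as $n/p=p^{m-1}n_0$, the inductive hypothesis applies and shows $1+t^{n/p}\bar c=1$ in $1+t^{n/p}\cur O_{X_{n/p}}$, i.e. $t^{n/p}\bar c=0$. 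By flatness of $\cur O_{X_{n/p}}$ over $R_{n/p}$ this forces $\bar c=0$ in $\cur O_{X_0}$, whence $\bar a=\bar c^p=0$.

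I expect the inductive step to be the main obstacle: getting $1+t^na$ into the form $v^p$ depends on the Cartier input $\bar a=\bar c^p$, and descending the vanishing of $p\cdot d\log v$ from level $r$ to level $r-1$ requires having exactly the right package of de\thinspace Rham--Witt identities available for the schemes $X_n$ (which are not smooth over $k$, only nilpotent thickenings of the smooth $X_0$). The remaining work is bookkeeping — keeping $\Omega^1_{X_n/k}$, $\Omega^1_{X_n/R_n}$ and $\Omega^1_{R_n/k}$ apart and reading off the correct consequence from each projection.
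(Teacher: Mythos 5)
Your base case is fine, and the overall shape of the argument (use the Cartier isomorphism on $X_0$ to peel off $p$-th powers and induct on $m$) is a legitimate strategy, but the inductive step has a genuine gap exactly where you suspected. From $0=p\,d\log[v]=VF\,d\log[v]=V(d\log[v])$ you conclude $d\log[v]=0$ in $W_{r-1}\Omega^1_{X_n}$ by ``injectivity of $V$''. That injectivity is one of Illusie's structure theorems for the de\thinspace Rham--Witt complex of a \emph{smooth} scheme over a perfect field; $X_n$ is a non-reduced thickening of $X_0$, and none of the structural results of \cite{Ill79}, I.3 (injectivity of $V$, the standard filtration, the exact sequences) are available off the shelf for $W_\bu\Omega^*_{X_n}$. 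Since this is the only mechanism by which you descend from level $r$ to level $r-1$, the induction does not close as written. (The relation $F\,d\log[u]=d\log[u]$ and the factorisation $p=VF$ are formal consequences of the defining relations and are fine; it is only the injectivity of $V$ that is unjustified.)

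The paper sidesteps the need for any structural input on $W_r\Omega^1_{X_n}$ by first collapsing the geometry: after trivialising the deformation ($A_n\cong A_0\otimes_k R_n$, so the source is $1+t^nA_0$), the vanishing of $a\in A_0$ can be tested at closed points, so by functoriality of $d\log$ one reduces to $A_0$ a finite extension of $k$ and then to $A_0=k$. There, perfectness of $k$ gives $1+at^n=(1+bt^{n_0})^{p^m}$ exactly (no Cartier operator and no induction on $m$), and the whole lemma reduces to the single non-vanishing statement $p^mt^{n_0-1}dt\neq 0$ in $W_r\Omega^1_{R_n}$, which is checked from the explicit Langer--Zink presentation of $W_r\Omega^1_{R_n}$ as a quotient of $W_r\Omega^1_{k[t]}\otimes_{W_r(k[t])}W_rR_n$. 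So the paper also works with the de\thinspace Rham--Witt complex of a non-smooth ring, but only needs to exhibit one non-zero element via an explicit presentation, rather than a global injectivity statement. The cheapest repair of your argument is to perform the same reduction to $A_0=k$ at the outset; but even then the descent step would require $V$-injectivity for $W_\bu\Omega^1_{R_n}$, so you are better off replacing the induction on $m$ by the direct computation.
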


\begin{proof} It suffices to prove the corresponding statement for sections on some open affine $\spec{A_n}\subset X_n$, which we may moreover assume to be \'etale over $R_n[x_1,\ldots,x_d]$. In this case, since deformations of smooth affine schemes are trivial, we have $A_n\cong A_0\otimes_k R_n$. Hence $1+t^nA_n=1+t^nA_0$, and our problem therefore reduces to showing that if $a\in A_0$ is such that $d\log[1+at^n]=0$, then in fact $a=0$. But vanishing of $a$ may be checked over all closed points of $\spec{A_0}$, so by functoriality of the $d\log$ map we may in fact assume that $A_0$ is a finite extension of $k$; enlarging $k$ we may moreover assume that $A_0=k$. In other words we need to show that the map
\[ d\log: 1+t^nk  \rightarrow W_r\Omega^1_{R_n}\]
is injective. Since $k$ is perfect, any $1+at^n\in 1+t^nk$ can be written uniquely as $(1+t^{n_0}b)^{p^m}$ for some $b\in k$, hence $d\log[1+at^n]=p^md\log(1+t^{n_0}b)$. It follows that if $d\log[1+at^n]=0$, then $n_0p^mbt^{n_0-1}dt=0$ in $W_r\Omega^1_{R_n}$  - note that although $b\in k$, nonetheless $p^mb$ still makes sense as an element of $W_{m+1}(k)=W_r(k)$. Since any non-zero such $b$ is invertible, the lemma will follow if we can show that $p^mt^{n_0-1}dt$ is non-zero in $W_r\Omega^1_{R_n}$. This can be checked easily using the exact sequence
\[ \frac{W_r((t^{n+1}))}{W_r((t^{n+1})^2)} \overset{d}{\rightarrow} W_r\Omega^1_{k[t]} \otimes_{W_r(k[t])} W_rR_n\rightarrow W_r\Omega^1_{R_n}\rightarrow 0\]
from \cite{LZ05}.
\end{proof}

From this we deduce the following.

\begin{proposition} \label{e1} For $r\gg0$ (depending on $n$) there is a commutative diagram
\[ \xymatrix{ 1 \ar[r] & 1+t\cur{O}_{X_n} \ar[r] \ar@{=}[d] & \cur{O}_{X_n}^*  \ar[r] \ar[d]^{d\log}& \cur{O}_{X_0}^* \ar[d]^{d\log}\ar[r] & 1 \\ 
1 \ar[r] & 1+t\cur{O}_{X_n} \ar[r]^-{d\log} &  W_r\Omega^1_{X_n,\log} \ar[r] & W_r\Omega^1_{X_0,\log} \ar[r] & 0  }\]
with exact rows.
\end{proposition}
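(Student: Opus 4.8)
The plan is to get both rows and the commutativity of the diagram essentially for free, and to concentrate all the work on the injectivity of $d\log$ on $1+t\cur{O}_{X_n}$. The top row is the tautological exact sequence attached to the nilpotent thickening $X_0\hookrightarrow X_n$ with ideal sheaf $t\cur{O}_{X_n}$: a local section of $\cur{O}_{X_n}$ reducing to a unit on $X_0$ is itself a unit, and it maps to $1$ in $\cur{O}_{X_0}^*$ exactly when it lies in $1+t\cur{O}_{X_n}$. The left-hand square commutes because its left vertical arrow is the identity, and the right-hand square commutes by functoriality of $d\log$. So the statement reduces to exactness of the bottom row, for which there are three things to verify: surjectivity of $W_r\Omega^1_{X_n,\log}\to W_r\Omega^1_{X_0,\log}$, exactness in the middle, and injectivity of $d\log$ on $1+t\cur{O}_{X_n}$.

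Surjectivity is immediate: $W_r\Omega^1_{X_0,\log}$ is generated, as an \'etale sheaf, by $d\log$ of local units of $\cur{O}_{X_0}$, and each such unit lifts to a local unit of $\cur{O}_{X_n}$ by the top row. For exactness in the middle, one inclusion is clear since $1+t\cur{O}_{X_n}$ maps to $1$ in $\cur{O}_{X_0}^*$. For the reverse, I would take a local section $\omega=d\log\tilde a$ of $W_r\Omega^1_{X_n,\log}$, with $\tilde a\in\cur{O}_{X_n}^*$, whose image in $W_r\Omega^1_{X_0,\log}$ vanishes, so that $d\log\bar a=0$ with $\bar a:=\tilde a\bmod t$; since $X_0$ is smooth over $k$, \cite[Proposition I.3.23.2]{Ill79} lets me write $\bar a=\bar b^{\,p^r}$ \'etale-locally for a unit $\bar b$, and lifting $\bar b$ to $\tilde b\in\cur{O}_{X_n}^*$ the element $u:=\tilde a\tilde b^{-p^r}$ lies in $1+t\cur{O}_{X_n}$ with $d\log u=d\log\tilde a-p^r d\log\tilde b=\omega$, using that $p^r$ annihilates $W_r\Omega^1_{X_n}$.

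The crux is injectivity of $d\log\colon 1+t\cur{O}_{X_n}\to W_r\Omega^1_{X_n,\log}$, which is where the hypothesis $r\gg0$ is used: I would choose $r$ so that $r\ge v_p(j)+1$ for every $1\le j\le n$, say $r=\lfloor\log_p n\rfloor+1$. Checking injectivity on stalks, and using that $\cur{O}_{\cur X}$ is flat over $R$ so that the $t$-adic filtration on $\cur{O}_{X_n,\bar x}$ is separated with graded pieces $\cur{O}_{X_0,\bar x}$, I would argue by contradiction: if some $u\ne1$ in $1+t\cur{O}_{X_n,\bar x}$ had $d\log u=0$, then writing $u=1+t^jg$ with $j=v_t(u-1)\in\{1,\dots,n\}$ and $\bar g:=g\bmod t\ne0$, I would pull back along the closed immersion $X_j\hookrightarrow X_n$ induced by $R_n\twoheadrightarrow R_j$; by functoriality of $d\log$ the image $\bar u=1+t^j\bar g$ of $u$ in $1+t^j\cur{O}_{X_j}$ satisfies $d\log\bar u=0$ in $W_r\Omega^1_{X_j,\log}$, hence in $W_{v_p(j)+1}\Omega^1_{X_j,\log}$. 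Lemma \ref{d1}, applied with $n$ replaced by $j$, then gives $\bar u=1$, i.e. $\bar g=0$, a contradiction.

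The one genuine obstacle is this last step. Lemma \ref{d1} is tailored to the small group $1+t^n\cur{O}_{X_n}$ of top-degree units, so to control $d\log$ on the whole of $1+t\cur{O}_{X_n}$ I must descend the $t$-adic filtration and, before each application of the lemma, push forward to the truncation $X_j$ whose thickening level matches the degree of the leading term of $u-1$; tracking how $r$ has to grow with $n$ through this reduction is exactly what forces the bound $r\ge\lfloor\log_p n\rfloor+1$.
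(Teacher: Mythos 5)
Your proposal is correct and follows essentially the same route as the paper: the top row and commutativity are taken as immediate, surjectivity and middle exactness of the bottom row are deduced from \cite[Proposition I.3.23.2]{Ill79} exactly as in the text, and injectivity of $d\log$ on $1+t\cur{O}_{X_n}$ is reduced to Lemma \ref{d1}. Your reduction via the leading $t$-adic term and the truncation $X_j$ is just an unwound form of the paper's ``induction on $n$'', and your bound $r\geq v_p(j)+1$ for all $j\leq n$ makes the implicit choice of $r\gg 0$ explicit.
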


\begin{proof} It is well-known that the top row is exact, and the diagram is clearly commutative, it therefore suffices to show that for all $n$ the sequence 
\[ 1 \rightarrow  1+t\cur{O}_{X_n} \rightarrow  W_r\Omega^1_{X_n,\log} \rightarrow W_r\Omega^1_{X_0,\log} \rightarrow 0 \]
is exact for $r\gg 0$. From the definition of $W_r\Omega^1_{X_n,\log}$ and the exactness of the sequence
\[1 \rightarrow  1+t\cur{O}_{X_n}\rightarrow  \cur{O}_{X_n}^*  \rightarrow  \cur{O}_{X_0}^* \rightarrow  1  \]
it is immediate that $W_r\Omega^1_{X_n,\log} \rightarrow W_r\Omega^1_{X_0,\log} $ is surjective and the composite $1+t\cur{O}_{X_n} \rightarrow W_r\Omega^1_{X_0,\log} $ is zero. Given $\alpha\in \cur{O}_{X_n}^*$ mapping to $0$ in $W_r\Omega^1_{X_0,\log}$, it follows from \cite[Proposition I.3.23.2]{Ill79} that there exists $\beta\in \cur{O}_{X_n}^*$ and $\gamma\in 1+t\cur{O}_{X_n}$ such that $\alpha=\beta^{p^r}+\gamma$, and hence $d\log \alpha=d\log \gamma$ in $W_r\Omega^1_{X_n,\log}$. The sequence
\[ 1+t\cur{O}_{X_n} \rightarrow  W_r\Omega^1_{X_n,\log} \rightarrow W_r\Omega^1_{X_0,\log} \rightarrow 0 \]
is therefore exact, and it remains to show that
\[ d\log: 1+t\cur{O}_{X_n} \rightarrow  W_r\Omega^1_{X_n,\log} \]
is injective for $r\gg0$. By induction on $n$ this follows from Lemma \ref{d1} above. 
\end{proof}
 
We now set
\[ W_r\Omega^i_{\mathfrak{X},\log}:=\lim_n W_r\Omega^i_{X_n,\log}\]
as sheaves on $\mathfrak{X}_{\et}$ and define
\[H^j_{\mathrm{cont}}(\mathfrak{X}_{\et},W\Omega^i_{\mathfrak{X},\log}):=H^j(\mathbf{R}\lim_r \mathbf{R}\Gamma(\mathfrak{X}_{\et},W_r\Omega^i_{\mathfrak{X},\log})) .\] 
As an essentially immediate corollary of Proposition \ref{e1}, we deduce the key step of Morrow's proof of the variational Tate conjecture in this case.

\begin{corollary} \label{m1} Let $\cur{L}\in \mathrm{Pic}(X_0)$, with first Chern class $c_1(\cur L)\in H^1_\mathrm{cont}(X_{0,\et},W\Omega^1_{X_0,\log})$. Then $\cur L$ lifts to $\mathrm{Pic}(\cur X)$ if and only if $c_1(\cur L)$ lifts to $H^1_{\mathrm{cont}}(\cur{X}_{\et},W\Omega^1_{\cur{X},\log})$.
\end{corollary}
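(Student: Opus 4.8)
The plan is to deduce this from Proposition \ref{e1} by taking cohomology and passing to the limit over $n$. First I would recall the standard fact that for a scheme $Y$ over $k$ of characteristic $p$, the first Chern class of a line bundle in logarithmic Hodge--Witt cohomology is obtained by applying $H^1(Y_\et,-)$ to the $d\log$ map $\cur{O}_Y^*\to W_r\Omega^1_{Y,\log}$ (and then passing to the limit over $r$, with the target $\mathbf{R}\lim_r$); this is how $c_1$ is normalised here. So $\cur{L}$ lifts to $\mathrm{Pic}(\cur X)$ if and only if it lifts to $\mathrm{Pic}(X_n)$ for all $n$ compatibly (using that $\mathrm{Pic}(\cur X)=\lim_n\mathrm{Pic}(X_n)$, which follows from Grothendieck's formal existence theorem since $\cur X$ is proper over $R=k\pow{t}$ — although for the corollary as stated one only really needs the statement about $c_1$), and similarly $c_1(\cur L)$ lifts to $H^1_\mathrm{cont}(\cur X_\et,W\Omega^1_{\cur X,\log})$ if and only if it lifts compatibly to each $H^1_\mathrm{cont}(X_{n,\et},W\Omega^1_{X_n,\log})$.

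Fix $n$ and choose $r\gg 0$ as in Proposition \ref{e1}. Taking \'etale cohomology of the two exact rows there and comparing the resulting long exact sequences gives a commutative ladder
\[ \xymatrix{ H^1(X_{n,\et},\cur{O}^*_{X_n}) \ar[r]\ar[d]^{d\log} & H^1(X_{0,\et},\cur{O}^*_{X_0}) \ar[r]^-{\partial}\ar[d]^{d\log} & H^2(X_{n,\et},1+t\cur{O}_{X_n}) \ar@{=}[d] \\ H^1(X_{n,\et},W_r\Omega^1_{X_n,\log}) \ar[r] & H^1(X_{0,\et},W_r\Omega^1_{X_0,\log}) \ar[r]^-{\partial'} & H^2(X_{n,\et},1+t\cur{O}_{X_n}) } \]
in which the connecting maps $\partial$ and $\partial'$ agree because the rightmost vertical arrows in Proposition \ref{e1} are equalities. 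Hence $\cur{L}\in\mathrm{Pic}(X_0)$ lifts to $\mathrm{Pic}(X_n)$ iff $\partial[\cur{L}]=0$ iff $\partial' (c_1^{(r)}(\cur L))=0$ iff $c_1^{(r)}(\cur L)$ lifts to $H^1(X_{n,\et},W_r\Omega^1_{X_n,\log})$, where $c_1^{(r)}$ denotes the level-$r$ Chern class. The only subtlety is the dependence on $r$: one has to check that this equivalence is compatible as $r$ grows (so that it survives $\mathbf{R}\lim_r$) and as $n$ grows, which it does because the diagrams above are functorial in $r$ and $n$ in the evident way, and because $\mathbf{R}\lim$ of the resulting tower of "lifting obstruction" exact sequences behaves well — here I would invoke that $\{1+t\cur O_{X_n}\}$-cohomology and the Mittag-Leffler condition cause no trouble, or simply phrase everything as an equivalence of the two lifting problems before passing to the limit.

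The main obstacle, such as it is, is bookkeeping rather than mathematics: one must be careful that the integer $r$ in Proposition \ref{e1} depends on $n$, so that the naive "take the limit over $n$ then over $r$" has to be organised correctly — the clean way is to fix $n$, work at a single large $r$, establish the equivalence of lifting problems for $(X_0\hookrightarrow X_n)$ at that level, and only afterwards let $n\to\infty$ and $r\to\infty$, using that the Chern class in $H^1_\mathrm{cont}(\cur X_\et,W\Omega^1_{\cur X,\log})$ is by definition assembled from the $W_r\Omega^1_{X_n,\log}$ via $\mathbf{R}\lim$ in both indices. Granting this, the corollary is immediate from the ladder above.
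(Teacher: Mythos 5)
Your argument is essentially the paper's own proof, just with the obstruction-chase through the ladder of Proposition \ref{e1} written out explicitly: the paper likewise deduces from Proposition \ref{e1} that $\cur L$ lifts to $\mathrm{Pic}(\frak{X})$ and then invokes Grothendieck's algebrisation theorem. The one point to tighten is the passage to the limit over $n$ --- to obtain a \emph{compatible} system of lifts, i.e.\ an element of $\mathrm{Pic}(\frak{X})=\lim_n\mathrm{Pic}(X_n)$, you should (as you suggest in passing) run the obstruction argument at the level of pro-sheaves, placing the obstruction in $H^2_\mathrm{cont}(X_{0,\et},\{1+t\cur{O}_{X_n}\}_n)$, rather than lifting to each $X_n$ separately and only then trying to match the lifts.
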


\begin{proof} One direction is obvious. For the other direction, assume that the first Chern class $c_1(\cur L)$ lifts to $H^1_{\mathrm{cont}}(\cur{X}_{\et},W\Omega^1_{\cur{X},\log})$, in particular it therefore lifts to $H^1_{\mathrm{cont}}(\mathfrak{X}_{\et},W\Omega^1_{\mathfrak{X},\log})$. Hence by Proposition \ref{e1} it follows that $\cur L$ lifts to $\mathrm{Pic}(\mathfrak{X})$, and we may conclude using Grothendieck's algebrisation theorem that it lifts to $\mathrm{Pic}(\cur X)$.
\end{proof}

From this the \textbf{(crys-}$\mathbf{\phi}\textbf{)}$ form of the variational Tate conjecture follows as in \cite{Mor14}.

\begin{corollary} \label{mm1} Let $\cur{L}\in \mathrm{Pic}(X_0)_{\Q}$, with first Chern class $c_1(\cur L)\in H^2_\mathrm{cris}(X_0/K)^{\varphi=p}$. Then $\cur L$ lifts to $\mathrm{Pic}(\cur X)_{\Q}
$ if and only if $c_1(\cur L)$ lifts to $H^2_\mathrm{cris}(\cur{X}/K)^{\varphi=p}$. 
\end{corollary}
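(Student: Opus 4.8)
The plan is to deduce Corollary~\ref{mm1} from Corollary~\ref{m1} by passing to $\Q$-coefficients, exactly as in \cite{Mor14}: the bridge is the comparison between logarithmic Hodge--Witt cohomology in bidegree $(1,1)$ and the part of $H^2_\cris$ on which Frobenius acts as $p$. So the first thing to set up is a pair of comparison isomorphisms, compatible with restriction along $X_0\hookrightarrow\cur X$ and with first Chern classes. For a smooth proper $k$-scheme $Y$, Illusie's theorem \cite{Ill79} identifies $H^j(Y_\et,W\Omega^i_{Y,\log})_{\Q}$ with the subspace of $H^{i+j}_\cris(Y/K)$ on which Frobenius acts as $p^i$; applied with $(i,j)=(1,1)$ and $Y=X_0$ it gives an isomorphism
\[ H^1_\mathrm{cont}(X_{0,\et},W\Omega^1_{X_0,\log})_{\Q}\isomto H^2_\cris(X/K)^{\varphi=p} \]
(with $H^2_\cris(X/K)$ the crystalline cohomology of the special fibre) carrying the class of $\cur L$ to $c_1(\cur L)$, the compatibility of $d\log$ with the crystalline Chern class being standard. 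For the total space one argues identically, replacing the base $k$ by $k\pow{t}$: the non-logarithmic case of Proposition~\ref{omfe} (namely \cite[Propositions~I.3.26, I.3.27, I.3.28]{Ill79}) furnishes an exact sequence of pro-sheaves
\[ 0\to\{W_r\Omega^1_{\cur X,\log}\}_r\to\{W_r\Omega^1_{\cur X}\}_r\overset{1-F}{\longrightarrow}\{W_r\Omega^1_{\cur X}\}_r\to 0, \]
and, feeding this into the non-logarithmic analogue of Proposition~\ref{cohcomp} (which identifies $H^*_\mathrm{cont}(\cur X_\et,W\Omega^*_{\cur X})_{\Q}$ with $H^*_\cris(\cur X/K)$) together with the $\Q$-degeneracy of the slope spectral sequence, one obtains $H^1_\mathrm{cont}(\cur X_\et,W\Omega^1_{\cur X,\log})_{\Q}\isomto H^2_\cris(\cur X/K)^{\varphi=p}$.

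Granting these, Corollary~\ref{mm1} is formal. Write $\cur L=\tfrac1N\cur L_0$ with $\cur L_0\in\mathrm{Pic}(X_0)$ an honest line bundle. Clearing denominators, $\cur L$ lifts to $\mathrm{Pic}(\cur X)_{\Q}$ if and only if $N'\cur L_0$ lifts to $\mathrm{Pic}(\cur X)$ for some nonzero integer $N'$; and, using the second comparison isomorphism to rewrite the target, $c_1(\cur L)\in H^2_\cris(X/K)^{\varphi=p}$ lifts to $H^2_\cris(\cur X/K)^{\varphi=p}$ if and only if $c_1(N''\cur L_0)=N''c_1(\cur L_0)$ lifts to $H^1_\mathrm{cont}(\cur X_\et,W\Omega^1_{\cur X,\log})$ for some nonzero integer $N''$. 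Applying Corollary~\ref{m1} to the honest line bundles $N\cur L_0$ shows that these two ``some nonzero multiple'' conditions agree, which is exactly what is claimed.

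I expect the only genuine obstacle to be the second comparison isomorphism $H^1_\mathrm{cont}(\cur X_\et,W\Omega^1_{\cur X,\log})_{\Q}\cong H^2_\cris(\cur X/K)^{\varphi=p}$ over $k\pow{t}$; the point is that Illusie's slope-spectral-sequence machinery is normally run over a perfect field, where crystalline cohomology is finite-dimensional, whereas here the cohomology groups are modules over $W\pow{t}$, so one must check that the relevant derived limits, the $\Q$-degeneracy, and the surjectivity of $1-F$ are unaffected by the base change. As with Propositions~\ref{omfe} and~\ref{cohcomp}, all of these statements reduce by N\'eron--Popescu desingularisation to the classical case of smooth $k$-schemes, so this should in the end be routine.
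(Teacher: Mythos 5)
Your overall strategy --- reducing to Corollary \ref{m1} via a comparison between $H^1_\mathrm{cont}(-,W\Omega^1_{-,\log})_{\Q}$ and the $\varphi=p$ part of $H^2_\mathrm{cris}$ --- is the same as the paper's, but there is a genuine gap in the comparison step. The isomorphism
\[ H^1_\mathrm{cont}(X_{0,\et},W\Omega^1_{X_0,\log})_{\Q}\isomto H^2_\mathrm{cris}(X_0/K)^{\varphi=p} \]
is \emph{not} available for an arbitrary perfect field $k$: from the exact sequence of pro-sheaves $0\to W\Omega^1_{\log}\to W\Omega^1\xrightarrow{1-F}W\Omega^1\to 0$ one only gets a short exact sequence whose kernel term is the cokernel of $\varphi-p$ acting on $H^1_\mathrm{cont}(X_{0,\et},W\Omega^{\geq 1})_{\Q}$, and the vanishing of this cokernel is deduced from semisimplicity of $\varphi$-modules over $K$ precisely when $k$ is algebraically closed (this is the hypothesis under which the paper invokes Morrow's Proposition 3.2, and the same caveat appears explicitly in the logarithmic analogue in \S\ref{ssv}). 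Your argument genuinely needs this injectivity on the special fibre: given a crystalline lift $\alpha$ of $c_1(\cur L)$ and a Hodge--Witt class $\beta$ mapping onto $\alpha$, you must identify $\beta|_{X_0}$ with the logarithmic Hodge--Witt Chern class of $\cur L$ before Corollary \ref{m1} can be applied, and without injectivity the two could differ by an element of the kernel. The paper therefore proves the statement first for $k$ algebraically closed and then descends: base change to $\overline{k}\pow{t}$, apply N\'eron--Popescu desingularisation to replace $\overline{k}\pow{t}$ by a smooth local $k\pow{t}^{\mathrm{sh}}$-algebra admitting a section, conclude that $\cur L$ lifts after a finite extension $k'/k$, and finish by pushing forward along $\cur{X}\otimes_k k'\to\cur{X}$ and dividing by $[k':k]$. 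This descent step is absent from your proposal and cannot be dispensed with.

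A smaller point: for the total space the paper only establishes (and only needs) a surjection $H^1_{\mathrm{cont}}(\cur{X}_{\et},W\Omega^1_{\cur{X},\log})_{\Q}\twoheadrightarrow H^2_\mathrm{cris}(\cur{X}/K)^{\varphi=p}$. The isomorphism you assert would additionally require surjectivity of $\varphi-p$ on $H^1_\mathrm{cont}(\cur{X}_\et,W\Omega^{\geq 1}_{\cur X})_{\Q}$, which is a module over $W\pow{t}\otimes K$ rather than a finite-dimensional $\varphi$-module, so the semisimplicity argument does not apply; fortunately surjectivity suffices for your deduction, so this is an overstatement rather than a fatal error. The clearing-of-denominators bookkeeping at the end is fine.
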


\begin{proof} Let us first assume that $k$ is algebraically closed. By \cite[Proposition 3.2]{Mor14} the inclusions $W\Omega^1_{\cur{X},\log}[-1]\rightarrow W\Omega^*_{\cur{X},\log}$ and $W\Omega^1_{X_0,\log}[-1]\rightarrow W\Omega^*_{X_0,\log}$ induce an isomorphism 
\[ H^1_\mathrm{cont}(X_{0,\et},W\Omega^1_{X_0,\log})_{\Q}\isomto H^2_\mathrm{cris}(X_0/K)^{\varphi=p} \]
and a surjection
\[ H^1_{\mathrm{cont}}(\cur{X}_{\et},W\Omega^1_{\cur{X},\log} )_{\Q} \twoheadrightarrow H^2_\mathrm{cris}(\cur{X}/K)^{\varphi=p}. \]
The claim follows. In general, we argue as in \cite[Theorem 1.4]{Mor14}: the claim for $k$ algebraically closed shows that $\cur{L}$ lifts to $\mathrm{Pic}(\cur{X})_{\Q}$ after making the base change $k\pow{t}\rightarrow \overline{k}\pow{t}$. Let $k\pow{t}^\mathrm{sh}$ denote the strict Henselisation of $k\pow{t}$ inside $\overline{k}\pow{t}$, by N\'eron--Popescu desingularisation there exists some smooth local $k\pow{t}^\mathrm{sh}$-algebra $A$ such that $\cur{L}$ lifts to $\mathrm{Pic}(\cur{X})_{\Q}$ after making the base change $k\pow{t}\rightarrow A$. But the map $k\pow{t}^\mathrm{sh}\rightarrow A$ has a section, from which it follows that in fact $\cur{L}$ lifts to $\mathrm{Pic}(\cur{X})_{\Q}$ after making some finite field extension $k\rightarrow k'$. But now simply taking the pushforward via $\cur{X}\otimes_k k' \rightarrow \cur{X}$ and dividing by $[k':k]$ gives the result.
\end{proof}

To finish off this section, we wish to give a slightly different formulation of Corollary \ref{mm1}. After \cite{LP16} we can consider the `overconvergent' rigid cohomology $H^i_\rig(X/\cur{E}^\dagger)$ of the generic fibre $X$, which is a $(\varphi,\nabla)$-module over the bounded Robba ring $\cur{E}^\dagger$. Set $H^i_\rig(X/\cur{R}):=H^i_\rig(X/\cur{E}^\dagger)\otimes_{\cur{E}^\dagger} \cur{R}$. By combining Dwork's trick with smooth and proper base change in crystalline cohomology we have an isomorphism
\[  H^i_\rig(X/\cur{R})^{\nabla=0}\cong H^i_\rig(X_0/K) \]
for all $i$. In particular, for any $\cur L\in \mathrm{Pic}(X_0)_{\Q}$ we can consider $c_1(\cur{L})$ as an element of $H^i_\rig(X/\cur{R})^{\nabla=0}\subset H^i_\rig(X/\cur{R})$. One of the general philosophies of $p$-adic cohomology in equicharacteristic is that while the cohomology groups $H^i_\rig(X/\cur{R})$ in some sense only depend on the special fibre $X_0$, the `lift' $X$ of $X_0$ is seen in the $\cur{E}^\dagger$-lattice $H^i_\rig(X/\cur{E}^\dagger)\subset  H^i_\rig(X/\cur{R})$. The correct equicharacteristic analogue of a Hodge filtration, therefore, is an $\cur{E}^\dagger$-structure. With this in mind, then, a statement of the variational Tate conjecture for divisors which is perhaps slightly more transparently analogous to that in mixed characteristic is the following.

\begin{theorem}\label{l11} Assume that $\cur{X}$ is projective over $R$. Then a line bundle $\cur L\in \mathrm{Pic}(X_0)_{\Q}$ lifts to $\mathrm{Pic}(\cur X)_{\Q}$ if and only if  $c_1(\cur L)\in H^2_\rig(X/\cur{R})$ lies in $H^2_\rig(X/\cur{E}^\dagger)$.
\end{theorem}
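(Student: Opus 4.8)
The plan is to deduce this from Corollary \ref{mm1}, which already settles the variational Tate conjecture in the form \textbf{(crys-}$\varphi$\textbf{)}, by translating the statement ``$c_1(\cur L)$ lifts to $H^2_\mathrm{cris}(\cur X/K)^{\varphi=p}$'' into the statement ``$c_1(\cur L)\in H^2_\rig(X/\cur{E}^\dagger)$''. The bridge between the two is the isomorphism $H^i_\rig(X/\cur R)^{\nabla=0}\cong H^i_\rig(X_0/K)$ obtained via Dwork's trick and smooth proper base change, under which the class $c_1(\cur L)$ on the special fibre, viewed inside $H^2_\mathrm{cris}(X_0/K)=H^2_\rig(X_0/K)$, matches the class $c_1(\cur L)\in H^2_\rig(X/\cur R)^{\nabla=0}$ featuring in the statement of the theorem. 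So the real content is a comparison between the $\varphi$-module $H^2_\mathrm{cris}(\cur X/K)$ of the total space and the $(\varphi,\nabla)$-module $H^2_\rig(X/\cur{E}^\dagger)$ of the generic fibre.

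First I would recall from \cite{LP16} the relationship between $H^i_\rig(X/\cur E^\dagger)$ and the crystalline cohomology of $\cur X$: the $\cur E^+$-valued cohomology $H^i_\rig(X/\cur E^+)$, where $\cur E^+=W\pow t\otimes_W K$, is computed by (the rationalisation of) the continuous de\thinspace Rham--Witt cohomology $H^i_\mathrm{cont}(\cur X_\et,W\Omega^*_{\cur X})_{\Q}=H^i_\mathrm{cris}(\cur X/K)$, so that $H^i_\mathrm{cris}(\cur X/K)$ is naturally the ``$\cur E^+$-point'' of the overconvergent module, i.e. $H^i_\rig(X/\cur E^+)=H^i_\mathrm{cris}(\cur X/K)\otimes_{\cur E^+}\cdots$ once one remembers $\cur X$ is proper over $R$ so $t$-adic and rigid pictures agree. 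Then a class in $H^2_\rig(X/\cur R)^{\nabla=0}$ lies in $H^2_\rig(X/\cur E^\dagger)$ if and only if it lies in $H^2_\rig(X/\cur E^+)$, because a horizontal section of a $(\varphi,\nabla)$-module over $\cur E^\dagger$ which becomes a section over $\cur E^+$ after extension to $\cur R$ already came from $\cur E^+$: concretely, $\cur E^\dagger\cap\cur E^+=\cur E^+$ inside $\cur R$, or more robustly one uses that $\nabla=0$ sections of these modules form a $K$-structure compatible under all the inclusions $\cur E^+\hookrightarrow\cur E^\dagger\hookrightarrow\cur R$. I would make this precise by noting that under $H^2_\rig(X/\cur R)^{\nabla=0}\cong H^2_\rig(X_0/K)$, the subspace $H^2_\rig(X/\cur E^\dagger)^{\nabla=0}$ (resp.\ the image of $H^2_\mathrm{cris}(\cur X/K)$) is precisely the image of the ``restriction to the generic/special fibre'' map, and these two images coincide because both compute the same thing — the classes on $X_0$ that extend to a neighbourhood, which is what the $\cur E^\dagger$-lattice records.

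The cleanest route, and the one I would actually write: reduce to Corollary \ref{mm1} by identifying, inside $H^2_\rig(X_0/K)=H^2_\mathrm{cris}(X_0/K)$, the subspace ``classes extending to $H^2_\mathrm{cris}(\cur X/K)$'' with the subspace ``classes lying in $H^2_\rig(X/\cur E^\dagger)\subset H^2_\rig(X/\cur R)$''. For the first, Corollary \ref{mm1} tells us $\cur L$ lifts to $\mathrm{Pic}(\cur X)_{\Q}$ iff $c_1(\cur L)$ extends to $H^2_\mathrm{cris}(\cur X/K)^{\varphi=p}$; for the second, one uses that $H^2_\mathrm{cris}(\cur X/K)$ with its Frobenius is exactly $H^2_\rig(X/\cur E^+)$ and that the natural map $H^2_\rig(X/\cur E^\dagger)\to H^2_\rig(X/\cur R)$ is injective (by Kedlaya full faithfulness, \cite[Theorem 5.1]{Ked04c}, as already invoked in \S\ref{CCM}), so a class is in the $\cur E^\dagger$-part iff its pullback to $\cur R$, which is $c_1(\cur L)$, lifts compatibly; specialising the $\nabla=0$ part at $t=0$ recovers the crystalline statement. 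The projectivity hypothesis enters exactly where Corollary \ref{mm1} needs it (Morrow's \cite[Proposition 3.2]{Mor14}, which is stated for projective $\cur X$, and Grothendieck algebrisation in Corollary \ref{m1}).

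\textbf{Main obstacle.} The delicate point is the precise comparison $H^2_\mathrm{cris}(\cur X/K)\cong H^2_\rig(X/\cur E^+)$ as $\varphi$-modules, together with the compatibility of the resulting $\cur E^+$-lattice inside $H^2_\rig(X/\cur R)$ with the $\cur E^\dagger$-lattice $H^2_\rig(X/\cur E^\dagger)$ — i.e.\ checking that ``$c_1(\cur L)$ extends over $\cur E^+$'' and ``$c_1(\cur L)$ extends over $\cur E^\dagger$'' are genuinely equivalent conditions on a horizontal class. This is a statement about the two lattices $\cur E^+$ and $\cur E^\dagger$ inducing the same notion of ``integrality'' on $\nabla=0$ sections, which is true but requires either a direct argument with the explicit structure of $(\varphi,\nabla)$-modules over these rings (a horizontal section over $\cur E^\dagger\otimes_{\cur E^\dagger}\cur R$ lying in $\cur E^+\otimes_{\cur E^+}\cur R$-part is forced to lie in the $\cur E^\dagger$-part because $\cur E^\dagger\cap\cur E^+_{\cur R}=\cur E^\dagger_{\cur E^+}$ in a suitable sense) or an appeal to the fact that both lattices arise from the same crystalline/rigid cohomology of $\cur X$ and hence are tautologically identified on the level of the de\thinspace Rham--Witt computation. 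Everything else is bookkeeping on top of Corollary \ref{mm1}.
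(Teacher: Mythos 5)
There is a genuine gap, and it sits exactly at the step you flag as the ``main obstacle''. (For context: the paper disposes of Theorem \ref{l11} in one line, as a restatement of condition \textbf{(flat)} of \cite[Theorem 3.5]{Mor14}; the honest argument is written out only for the semistable analogue, Theorem \ref{mainss}, and it is that argument your proposal is approximating.) Your plan is to reduce to Corollary \ref{mm1} by identifying ``$c_1(\cur L)$ lifts to $H^2_\mathrm{cris}(\cur{X}/K)^{\varphi=p}$'' with ``$c_1(\cur L)\in H^2_\rig(X/\cur{E}^\dagger)$'', and the bridge you propose is the assertion that $H^2_\mathrm{cris}(\cur{X}/K)$ with its Frobenius ``is exactly'' $H^2_\rig(X/\cur{E}^+)$. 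This conflates two different objects. $H^2_\mathrm{cris}(\cur{X}/K)$ is the \emph{absolute} crystalline (continuous de\thinspace Rham--Witt) cohomology of the total space over $W$, a finite-dimensional $\varphi$-module over $K$; the object that is naturally a finite free $\cur{E}^+$-module with $\pn$-structure is the \emph{relative} cohomology $\mathbf{R}^2f_*\cur{O}^{\mathrm{cris}}_{\cur{X}/K}$, an $F$-isocrystal on $\spec{R}$. They are related only through the Leray spectral sequence for $f:\cur{X}\rightarrow\spec{R}$, and the step you are missing is the surjectivity of the edge map $H^2_\mathrm{cris}(\cur{X}/K)^{\varphi=p}\rightarrow H^0\bigl(\spec{R},\mathbf{R}^2f_*\cur{O}^{\mathrm{cris}}_{\cur{X}/K}\bigr)^{\varphi=p}$, obtained from degeneration of this spectral sequence via hard Lefschetz (\cite[Lemma 2.4, Theorem 2.5]{Mor14}). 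That is where projectivity genuinely enters --- not, as you suggest, only in Morrow's Proposition 3.2 and in Grothendieck algebrisation (Corollary \ref{mm1} needs only properness). Without this step, the equivalence between ``$c_1(\cur L)$ extends to a horizontal Frobenius eigensection of the relative crystal'' and ``$c_1(\cur L)$ lifts to $H^2_\mathrm{cris}(\cur{X}/K)^{\varphi=p}$'' is unsupported, and the reduction to Corollary \ref{mm1} does not go through.

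The other half of your bridge --- that for a horizontal class with $\varphi=p$, extending over $\cur{E}^+$ is equivalent to lying in $H^2_\rig(X/\cur{E}^\dagger)$ --- is correct in substance, but the mechanism is not an intersection statement like ``$\cur{E}^\dagger\cap\cur{E}^+=\cur{E}^+$ inside $\cur{R}$''. The two lattices are a priori constructed by different procedures, and what identifies their $(\nabla=0,\varphi=p)$-parts is that both inject into the $(\nabla=0,\varphi=p)$-part over the Amice ring $\cur{E}$ with the same image, by Kedlaya's full faithfulness theorem \cite[Theorem 5.1]{Ked04c} applied to both inclusions $\cur{E}^+\hookrightarrow\cur{E}$ and $\cur{E}^\dagger\hookrightarrow\cur{E}$, together with the comparison of generic fibres from \cite{LP16} (this is exactly how the paper argues in Theorem \ref{mainss}). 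I would encourage you to rewrite the proof following the template of Theorem \ref{mainss} in the smooth case: identify $\mathbf{R}^2f_*\cur{O}^{\mathrm{cris}}_{\cur{X}/K}$ with a $\pn$-module over $\cur{E}^+$, use hard Lefschetz to degenerate Leray and get the surjective edge map, and then apply Kedlaya full faithfulness to match $H^0$ over $\cur{E}^+$ with $H^2_\rig(X/\cur{E}^\dagger)^{\nabla=0,\varphi=p}$.
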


\begin{proof} This is simply another way of stating the condition $\textbf{(flat)}$ in \cite[Theorem 3.5]{Mor14}.
\end{proof}

\begin{remark} It seems entirely plausible that the methods of this section can be easily adapted to give a proof of \cite[Theorem 3.5]{Mor14} in general, i.e. over $k\pow{t_1,\ldots,t_n}$ rather than just $k\pow{t}$.
\end{remark}

\section{A semistable variational Tate conjecture for divisors}\label{ssv}

In this section we will prove a semistable version of Theorem \ref{l11}, or equivalently an equicharacteristic analogue of \cite[Theorem 0.1]{Yam11}. The basic set-up will be to take a proper, semistable scheme $\cur{X}/R$, as before we will consider the semistable schemes $X_n/R_n$ as well as the smooth generic fibre $X/F$. We will also let $\mathfrak{X}$ denote the formal completion of $\mathcal{X}$. 

The special fibre of $\cur X$ defines a log structure $M$, and pulling back via the immersion $X_n\rightarrow \cur X$ defines a log structure $M_{n}$ on each $X_n$. For each $n$ we will put a log structure $L_n$ on $R_n$ via $\N\rightarrow R_n$, $1\mapsto t$, note that for $n=0$ this is the log structure of the punctured point on $k$. We will let $L$ denote the log structure on $R$ defined by the same formula. As before we will write $R^\times=(R,L)$, $R_n^\times=(R_n,L_n)$, $\cur{X}^\times=(\cur{X},M)$, $X_n^\times=(X_n,M_n)$ and $k^\times=(k,L_0)$. The logarithmic version of Proposition \ref{e1} is then the following.

\begin{proposition} For $r\gg0$ (depending on $n$) there is a commutative diagram
\[ \xymatrix{ 1 \ar[r] & 1+t\cur{O}_{X_n} \ar[r] \ar@{=}[d] & \cur{O}_{X_n}^*  \ar[r] \ar[d]& \cur{O}_{X_0}^* \ar[d]\ar[r] & 1 \\ 
1 \ar[r] & 1+t\cur{O}_{X_n} \ar[r] \ar[d] & M_n^\mathrm{gp}  \ar[r] \ar[d]^{d\log}& M_0^\mathrm{gp} \ar[d]^{d\log}\ar[r] & 0 \\
1 \ar[r] & \cur{K}_{n,r} \ar[r] &  W_r\omega^1_{X_n^\times,\log} \ar[r] & W_r\omega^1_{X_0^\times/k^\times,\log} \ar[r] & 0  }\]
with exact rows. Moreover each $\cur{K}_{n,r}$ fits into an exact sequence of pro-sheaves on $X_{n,\et}$
\[ 1\rightarrow 1+t\cur{O}_{X_n} \rightarrow \{ \cur{K}_{n,r} \}_r \rightarrow  \{ \Z/p^r\Z \}_r \rightarrow 0\]
which is split compatibly with varying $n$.
\end{proposition}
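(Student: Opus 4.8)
The proof is a logarithmic elaboration of Proposition \ref{e1}, so I would follow its structure closely, building the big diagram one row at a time and reusing the input results already in place (Proposition \ref{fesl1}, Proposition \ref{omfe}, Lemma \ref{relnonrel}, and Lemma \ref{d1}).

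\emph{Step 1: the top two rows.} The topmost row is the standard exact sequence of units, and the middle row is obtained by tensoring the analogous sequence of log structures with $\Z$; exactness of $1\to 1+t\cur{O}_{X_n}\to M_n^\mathrm{gp}\to M_0^\mathrm{gp}\to 1$ follows because the log structure $M_n$ is pulled back from $M$ and the map on monoids is an isomorphism modulo units, so on associated groups the kernel and cokernel are exactly those of $\cur{O}_{X_n}^*\to\cur{O}_{X_0}^*$. Commutativity of the upper square is immediate, and the vertical maps $M_n^\mathrm{gp}\to M_0^\mathrm{gp}$ are the natural ones, with the square involving $\cur{O}^*$ commuting by functoriality of passing to associated groups.

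\emph{Step 2: the bottom row and the middle vertical $d\log$ maps.} Here I would define $\cur{K}_{n,r}$ simply as the kernel of $d\log\colon M_n^\mathrm{gp}\to W_r\omega^1_{X_n^\times,\log}$ composed with... no: more precisely, $\cur{K}_{n,r}:=\ker\!\left(W_r\omega^1_{X_n^\times,\log}\to W_r\omega^1_{X_0^\times/k^\times,\log}\right)$, and the content is to check (a) surjectivity of $W_r\omega^1_{X_n^\times,\log}\to W_r\omega^1_{X_0^\times/k^\times,\log}$, (b) that the composite $1+t\cur{O}_{X_n}\to W_r\omega^1_{X_0^\times/k^\times,\log}$ vanishes, (c) that $M_n^\mathrm{gp}\to W_r\omega^1_{X_n^\times,\log}$ lands in $\cur{K}_{n,r}$ with the induced map $1+t\cur{O}_{X_n}\to\cur{K}_{n,r}$, and (d) that for $r\gg 0$ this last map is injective. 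Surjectivity in (a) follows from the surjectivity of $d\log\colon M_0^\mathrm{gp}\to W_r\omega^1_{X_0^\times/k^\times,\log}$ (Proposition \ref{fesl1} combined with Lemma \ref{relnonrel}, or rather the relative version thereof) together with surjectivity of $M_n^\mathrm{gp}\to M_0^\mathrm{gp}$; (b) and (c) are formal from the commutativity of the diagram of $d\log$ maps. For (d), injectivity of $d\log\colon 1+t\cur{O}_{X_n}\to W_r\omega^1_{X_n^\times,\log}$ for $r\gg0$, I would copy the dévissage in the proof of Proposition \ref{e1}: an element of $1+t\cur{O}_{X_n}$ killed by $d\log$ in the logarithmic complex is in particular killed in the non-logarithmic $W_r\Omega^1$ after restricting to the smooth locus (using \cite[Proposition I.3.4]{Ill79} and \cite[Lemma 7.4]{Mat16} as in Proposition \ref{fesl1}), reducing to Lemma \ref{d1}.

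\emph{Step 3: the pro-sheaf sequence for $\cur{K}_{n,r}$ and its splitting.} By Lemma \ref{relnonrel} (in its relative-over-$R_n$ form) we have an exact sequence $0\to (\Z/p^r\Z)\wedge d\log t\to W_r\omega^1_{X_n^\times,\log}\to W_r\omega^1_{X_n^\times/R_n^\times,\log}\to 0$; intersecting with $\cur{K}_{n,r}$ and comparing with the special fibre gives the exact sequence $1\to 1+t\cur{O}_{X_n}\to\cur{K}_{n,r}\to\Z/p^r\Z\to 0$. The splitting comes from the canonical class $d\log t\in W_r\omega^1_{X_n^\times,\log}$: the map $\Z/p^r\Z\to\cur{K}_{n,r}$, $1\mapsto d\log t$, is a section, and it is manifestly compatible as $r$ and $n$ vary since $d\log t$ is pulled back from the base $R_n^\times$ at every level. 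I expect the main obstacle to be bookkeeping rather than conceptual: pinning down exactly the relative logarithmic de\thinspace Rham--Witt sequences over $R_n^\times$ (as opposed to over $k^\times$ or over $k$) that are needed, and verifying that $\cur{K}_{n,r}$ as defined by the kernel genuinely receives $1+t\cur{O}_{X_n}$ — i.e. that no unexpected $t$-divisibility phenomena appear in the logarithmic setting. Once the analogue of Lemma \ref{d1} is available for $X_n^\times$, the rest is diagram-chasing of the same flavour as \S3.
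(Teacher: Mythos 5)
Your proposal follows essentially the same route as the paper: $\cur{K}_{n,r}$ is defined as the kernel of $W_r\omega^1_{X_n^\times,\log}\to W_r\omega^1_{X_0^\times/k^\times,\log}$, left-exactness is reduced to the smooth case (Proposition \ref{e1} and Lemma \ref{d1}) by restricting to a dense open subscheme, and the $\Z/p^r\Z$-quotient of $\cur{K}_{n,r}$ is split by the compatible classes $d\log t$. The one place you diverge is Step 3, where you invoke an unproved ``relative-over-$R_n$'' form of Lemma \ref{relnonrel}; the paper instead first establishes exactness of $1\to 1+t\cur{O}_{X_n}\to W_r\omega^1_{X_n^\times,\log}\to W_r\omega^1_{X_0^\times,\log}\to 0$ (with the absolute log structure on $X_0$) and then obtains the extension of $\Z/p^r\Z$ by $1+t\cur{O}_{X_n}$ via the snake lemma using Lemma \ref{relnonrel} exactly as stated, which requires no new de\thinspace Rham--Witt input.
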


\begin{proof} We first claim that if we replace $W_r\omega^1_{X_0^\times/k^\times,\log}$ by $W_r\omega^1_{X_0^\times,\log}$ then we obtain an exact sequence
\[ 1\rightarrow 1+t\cur{O}_{X_n} \rightarrow  W_r\omega^1_{X_n^\times,\log} \rightarrow W_r\omega^1_{X_0^\times,\log} \rightarrow 0 \]
for $r\gg0$. Using Proposition \ref{fesl1} the proof of the exactness of 
\[ 1+t\cur{O}_{X_n} \rightarrow  W_r\omega^1_{X_n^\times,\log} \rightarrow W_r\omega^1_{X_0^\times,\log} \rightarrow 0 \] 
is exactly as in Proposition \ref{e1}. In fact, to check exactness on the left we can even apply Proposition \ref{e1}: to check a section of $1+t\cur{O}_{X_n}$  vanishes it suffices to do on a dense open subscheme of $X_n$, we may therefore \'etale locally replace $X_n$ by the canonical thickening of the smooth locus of the special fibre. But now we are in the smooth case, so we apply Proposition \ref{e1} (which holds locally).

Applying Lemma \ref{relnonrel} we know that the kernel of 
\[W_r\omega^1_{X_0^\times,\log} \rightarrow W_r\omega^1_{X_0^\times/k^\times,\log} \]
is isomorphic to $\Z/p^r\Z$, generated by $d\log t$. The snake lemma then shows that, defining $\cur{K}_{n,r}$ to be the kernel of $W_r\omega^1_{X_n^\times,\log}\rightarrow W_r\omega^1_{X_0^\times/k^\times,\log}$, we have the exact sequence
\[ 1\rightarrow 1+t\cur{O}_{X_n} \rightarrow \cur{K}_{n,r} \rightarrow  \Z/p^r\Z  \rightarrow 0 \]
for $r\gg0$. To see that it splits compatibly with varying $r$ and $n$ it therefore suffices to show that there exist compatible classes $\omega_r\in W_r\omega^1_{X_n^\times}$ whose image in $W_r\omega^1_{X_0^\times,\log}$ generate the kernel of $W_r\omega^1_{X_0^\times,\log} \rightarrow W_r\omega^1_{X_0^\times/k^\times,\log}$; as we have already observed the classes of $d\log t$ will suffice.
\end{proof}

Let $\mathrm{Pic}(X_0^\times)=H^1(X_{0,\et},M_0^\mathrm{gp})$ and $\mathrm{Pic}(\cur X^\times)=H^1({\cur X}_{\et},M^\mathrm{gp})$. As before, we therefore obtain the following.

\begin{corollary} Let $\cur L\in \mathrm{Pic}(X_0^\times)$ (resp. $\mathrm{Pic}(X_0)$). Then $\cur{L}$ lifts to $\mathrm{Pic}(\cur{X}^\times)$ (resp. $\mathrm{Pic}(\cur X)$) iff $c_1(\cur L)\in H^1_\mathrm{cont}(X_{0,\et},W\omega^1_{X_0^\times/k^\times,\log})$ lifts to $H^1_\mathrm{cont}({\cur X}_{\et},W\omega^1_{\cur X^\times,\log})$.
\end{corollary}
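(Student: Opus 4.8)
The plan is to mimic the proof of Corollary \ref{m1} exactly, using the previous Proposition in place of Proposition \ref{e1}. First I would define $W_r\omega^1_{\frak X^\times,\log}:=\lim_n W_r\omega^1_{X_n^\times,\log}$ (and similarly for $\cur K_{n,r}$) as pro-sheaves on $\frak X_\et$, and set $H^j_\mathrm{cont}(\frak X_\et,W\omega^1_{\frak X^\times,\log}):=H^j(\mathbf R\lim_r\mathbf R\Gamma(\frak X_\et,W_r\omega^1_{\frak X^\times,\log}))$, exactly as was done in the smooth case just before Corollary \ref{m1}. One direction is trivial: a lift of $\cur L$ to $\mathrm{Pic}(\cur X^\times)$ (resp. $\mathrm{Pic}(\cur X)$) visibly pushes forward under $d\log$ to a lift of $c_1(\cur L)$.

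For the converse, suppose $c_1(\cur L)\in H^1_\mathrm{cont}(X_{0,\et},W\omega^1_{X_0^\times/k^\times,\log})$ lifts to $H^1_\mathrm{cont}(\cur X_\et,W\omega^1_{\cur X^\times,\log})$. Restricting along $\frak X\to\cur X$ it then lifts to $H^1_\mathrm{cont}(\frak X_\et,W\omega^1_{\frak X^\times,\log})$. Taking $\mathbf R\lim_r$ of the bottom row of the diagram in the previous Proposition and passing to cohomology gives a long exact sequence relating $H^*_\mathrm{cont}(\frak X_\et,W\omega^1_{\frak X^\times,\log})$, $H^*_\mathrm{cont}(\frak X_\et,W\omega^1_{X_0^\times/k^\times,\log})$, and the $H^*_\mathrm{cont}$ of $\{\cur K_{n,r}\}$; the splitting of the pro-sheaf sequence $1\to 1+t\cur O_{X_n}\to\{\cur K_{n,r}\}_r\to\{\Z/p^r\Z\}_r\to0$, compatibly in $n$, lets us identify the relevant connecting maps with those coming from the top two rows. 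A diagram chase of the same shape as in Corollary \ref{m1} (using that the top rows compute $\mathrm{Pic}$ of $X_n^\times$ resp. $X_n$, and hence in the limit of $\frak X^\times$ resp. $\frak X$) then shows that $\cur L$ lifts to $\mathrm{Pic}(\frak X^\times)$ (resp. $\mathrm{Pic}(\frak X)$). Finally, Grothendieck's algebrisation theorem — applied to the log-smooth formal scheme $\frak X^\times$ in the logarithmic case, via the identification $\mathrm{Pic}(\frak X^\times)=H^1(\frak X_\et,M^\mathrm{gp})$ and the fact that $M^\mathrm{gp}$ is an extension of a constant sheaf by $\cur O^*$ — upgrades this to a lift in $\mathrm{Pic}(\cur X^\times)$ (resp. $\mathrm{Pic}(\cur X)$).

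The main obstacle I anticipate is bookkeeping the extra term $\{\Z/p^r\Z\}$ appearing in $\cur K_{n,r}$: one has to check that its contribution to the $\mathbf R\lim_r$ cohomology does not obstruct lifting $\cur L$, which is precisely what the compatible splitting in the previous Proposition is designed to handle — it reduces the bottom row of the diagram to the direct sum of the $1+t\cur O_{X_n}$-part (handled verbatim as in Proposition \ref{e1}/Corollary \ref{m1}) and a constant pro-system that contributes nothing new to the relevant $\mathrm{Pic}$. The only genuinely new input beyond the smooth case is the logarithmic form of Grothendieck algebrisation, i.e. that $\mathrm{Pic}(\frak X^\times)\to\mathrm{Pic}(\cur X^\times)$ is bijective, which follows from the usual statement applied to $\cur O^*$ together with the fact that the divisorial part of $M^\mathrm{gp}$ is locally constant and hence unobstructed.
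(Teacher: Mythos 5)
Your proof is correct, and its core is the same as the paper's: both arguments express the obstruction to lifting $\cur L$ to $\frak{X}^\times$ (resp.\ $\frak X$) as a class in $H^2_{\mathrm{cont}}(X_{0,\et},\{1+t\cur{O}_{X_n}\}_n)$ and use the compatible splitting of $1\to 1+t\cur{O}_{X_n}\to\{\cur{K}_{n,r}\}_r\to\{\Z/p^r\Z\}_r\to 0$ to see that this group injects into the continuous $H^2$ of $\{\cur{K}_{n,r}\}_{n,r}$, where the assumed lift of $c_1(\cur L)$ kills the obstruction. The genuine difference is the final passage from $\frak{X}^\times$ to $\cur{X}^\times$. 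The paper does not algebrise the log structure directly: it identifies $\mathrm{Pic}(\cur{X}^\times)=H^1(\cur{X}_\et,M^{\mathrm{gp}})$ with $\mathrm{Pic}(X)$, the Picard group of the generic fibre, and $\mathrm{Pic}(\frak{X}^\times)$ with $\mathrm{Pic}(X^{\mathrm{an}})$, and then invokes rigid-analytic GAGA for the proper rigid space $X^{\mathrm{an}}$. Your alternative --- ordinary Grothendieck algebrisation for $\cur{O}^*$ combined with the extension $1\to\cur{O}^*\to M^{\mathrm{gp}}\to\bigoplus_i\underline{\Z}_{D_i}\to 0$ --- is viable, but ``locally constant and hence unobstructed'' hides a real step: a na\"ive five-lemma comparison of the two long exact sequences would also require injectivity of $H^2(\cur{X}_\et,\G_m)\to H^2(\frak{X}_\et,\G_m)$, which Grothendieck's theorem does not provide. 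You can close this by observing that $H^1_\et(D_i,\Z)=0$ (the components $D_i$ are smooth, hence normal, and a continuous homomorphism from a profinite fundamental group to the discrete group $\Z$ has trivial image), so that on both the algebraic and the formal side $H^1(M^{\mathrm{gp}})$ is just $\mathrm{Pic}$ modulo the classes $\cur{O}(D_i)$, and the comparison reduces to the usual $\mathrm{Pic}(\cur X)\cong\mathrm{Pic}(\frak X)$. A smaller discrepancy: the paper forms the continuous cohomology here as a derived limit in both $n$ and $r$ (this is the ``little more care'' it flags), rather than the na\"ive $\lim_n$ followed by $\mathbf{R}\lim_r$ that you carry over from the smooth case; this does not change the shape of your chase but is the cleaner way to make the obstruction-theoretic statements precise.
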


\begin{proof} This is similar to the proof of Corollary \ref{m1}, although a little more care is needed in taking the limits in $n$ and $r$. Again, one direction is clear, so we assume that we are given a (logarithmic) line bundle whose Chern class lifts. First we note that we have isomorphisms
\[ \mathrm{Pic}(\mathfrak{X}) \cong H^1_\mathrm{cont}(X_{0,\et},\{\cur{O}_{X_n}^*\}_n),\;\; \mathrm{Pic}(\mathfrak{X}^\times) \cong H^1_\mathrm{cont}(X_{0,\et},\{M_n^\mathrm{gp} \}_n)\]
and hence the obstruction to lifting (in either case) can be viewed as an element of $H^2_\mathrm{cont}(X_{0,\et},\{1+t\cur{O}_{X_n}\}_n)$. The fact that the Chern class lifts implies that this obstruction vanishes in
\[ H^2_\mathrm{cont}(X_{0,\et}, \{ \cur{K}_{n,r}\}_{n,r}):=H^2(\mathbf{R}\lim_n\mathbf{R}\lim_r \mathbf{R}\Gamma(X_{0,\et},\cur{K}_{n,r}))\]
and hence the fact that the exact sequence of pro-sheaves
\[ 1\rightarrow 1+t\cur{O}_{X_n} \rightarrow \{ \cur{K}_{n,r} \}_r \rightarrow  \{ \Z/p^r\Z \}_r \rightarrow 0 \]
splits, compatibly with varying $n$, shows that the obstruction must itself vanish in $H^2_\mathrm{cont}(X_{0,\et},\{1+t\cur{O}_{X_n}\}_n)$. Finally, we need to see that we have isomorphisms $\mathrm{Pic}(\mathfrak{X})\cong \mathrm{Pic}(\cur X)$ and $\mathrm{Pic}(\mathfrak{X}^\times)\cong \mathrm{Pic}(\cur{X}^\times)$. The first is Grothendieck's algebrization theorem, to see the second we note that $\mathrm{Pic}(\cur{X}^\times) \cong \mathrm{Pic}(X)$, the Picard group of the generic fibre of $\cur X$, similarly $\mathrm{Pic}(\mathfrak{X}^\times)\cong \mathrm{Pic}(X^{\mathrm{an}})$, the Picard group of its analytification. The two are isomorphic by rigid analytic GAGA.
\end{proof}

To relate this to log crystalline cohomology, we use the following.

\begin{lemma}The inclusions $W_r\omega^1_{\cur{X}^\times,\log}[-1] \rightarrow W_r\omega^*_{\cur{X}^\times}$ and $W_r\omega^1_{X_0^\times/k^\times,\log}[-1] \rightarrow W_r\omega^*_{X_0^\times/k^\times}$ induce surjections
\begin{align*} H^1_\mathrm{cont}({\cur X}_{\et},W\omega^1_{\cur{X}^\times,\log})_{\Q} &\twoheadrightarrow H^2_{\log\text{-}\mathrm{cris}}(\cur{X}^\times/K)^{\varphi=p} \\
H^1_\mathrm{cont}({X_0}_{\et},W\omega^1_{X_0^\times/k^\times,\log})_{\Q} &\twoheadrightarrow H^2_{\log\text{-}\mathrm{cris}}(X_0^\times/K^\times)^{\varphi=p}
\end{align*}
where $\varphi$ is the semilinear Frobenius operator. If $k$ is algebraically closed, then the latter is in fact an isomorphism.
\end{lemma}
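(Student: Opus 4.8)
The plan is to transcribe the proof of \cite[Proposition 3.2]{Mor14} into the logarithmic setting, the point being that all the necessary input is now available: the logarithmic de\thinspace Rham--Witt complex computes log-crystalline cohomology (Proposition \ref{cohcomp}, together with the analogous statement for $X_0^\times/k^\times$, proved the same way from \cite{HK94,Mat16}), and the exact sequences of pro-sheaves $0\to W_\bu\omega^1_{\log}\to W_\bu\omega^1\overset{1-F}{\to}W_\bu\omega^1\to 0$ of Proposition \ref{omfe} exhibit $W_\bu\omega^1_{\log}$ as the ``$F=1$'' part of $W_\bu\omega^1$. I will treat both statements at once: let $W_\bu\omega^\bu$ denote the relevant relative logarithmic de\thinspace Rham--Witt complex --- that of $\cur X^\times$ over $k$ (trivial log structure), resp.\ of $X_0^\times$ over $k^\times$ --- with $W_\bu\omega^\bu_{\log}$ its subsheaf of logarithmic forms and $W_\bu\omega^{\geq i}$ its stupid truncations, write $H^*_\mathrm{cont}(-):=H^*\bigl(\mathbf{R}\lim_r\mathbf{R}\Gamma((-)_\et,(-)_r)\bigr)$, and recall that on $H^j_\mathrm{cont}(W\omega^i)$ the crystalline Frobenius is normalised as $\varphi=p^iF$.

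First I would record the two stupid-truncation short exact sequences of complexes of pro-sheaves,
\[ 0\to W_\bu\omega^{\geq 2}\to W_\bu\omega^{\geq 1}\to W_\bu\omega^{1}[-1]\to 0, \qquad 0\to W_\bu\omega^{\geq 1}\to W_\bu\omega^{\bu}\to W_\bu\cur{O}[0]\to 0, \]
together with the morphism of complexes $W_\bu\omega^1_{\log}[-1]\to W_\bu\omega^{\geq 1}$, which exists because logarithmic $1$-forms are closed. Combining this morphism with the second inclusion and invoking Proposition \ref{cohcomp} (and its $k^\times$-analogue) identifies the map in the statement with the composite
\[ H^1_\mathrm{cont}(W\omega^1_{\log})_\Q\longrightarrow H^2_\mathrm{cont}(W\omega^{\geq 1})_\Q\longrightarrow H^2_\mathrm{cont}(W\omega^{\bu})_\Q=H^2_{\log\text{-}\mathrm{cris}}. \]
Since $F$ fixes $d\log$-classes, the image lands in the $\varphi=p$ eigenspace, and it remains to analyse the two arrows.

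The engine is Illusie's slope estimate: $\varphi$ has slopes in $[i,i+1)$ on $H^j_\mathrm{cont}(W\omega^i)$, so $H^j_\mathrm{cont}(W\omega^i)^{\varphi=p}=0$ whenever $i\neq1$. Feeding this into the long exact sequences of the two truncation triangles, the terms $W_\bu\omega^{\geq2}$ (slopes $\geq2$) force $H^2_\mathrm{cont}(W\omega^{\geq1})^{\varphi=p}\isomto H^1_\mathrm{cont}(W\omega^1)^{\varphi=p}$, while the terms $W_\bu\cur{O}$ (slopes $<1$) force $H^2_\mathrm{cont}(W\omega^{\geq1})^{\varphi=p}\isomto(H^2_{\log\text{-}\mathrm{cris}})^{\varphi=p}$ (in each case the connecting maps in and out have kernel and cokernel a sub- or quotient-isocrystal of slopes outside $\{1\}$, on which $\varphi-p$ is bijective). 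Finally, precomposing the first isomorphism with $W_\bu\omega^1_{\log}[-1]\to W_\bu\omega^{\geq1}$ recovers the natural map $H^1_\mathrm{cont}(W\omega^1_{\log})_\Q\to H^1_\mathrm{cont}(W\omega^1)_\Q$, which by the long exact sequence of Proposition \ref{omfe} surjects onto $H^1_\mathrm{cont}(W\omega^1)^{F=1}_\Q=H^1_\mathrm{cont}(W\omega^1)^{\varphi=p}_\Q$ with kernel $\mathrm{coker}\bigl(1-F\mid H^0_\mathrm{cont}(W\omega^1)\bigr)_\Q$. Composing the three arrows yields the asserted surjection onto $(H^2_{\log\text{-}\mathrm{cris}})^{\varphi=p}$. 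When $X_0/k$ is proper with $k$ algebraically closed the map is moreover injective: the pro-systems are Mittag--Leffler (so $\mathbf{R}\lim_r$ contributes no $\lim^1$, and all groups become finite-dimensional $K$-spaces), and $1-F$ is surjective on $H^0(X_0,W\omega^1_{X_0^\times/k^\times})_\Q$ --- by Artin--Schreier surjectivity on its slope-zero part and invertibility of $1-F$ on the part of positive slope --- so the kernel above vanishes.

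I expect the genuine work to be concentrated in the relative case over $R$, in two places. First, one must establish the slope estimates for the complexes $W_\bu\omega^i_{\cur X^\times}$, which is not immediate since $\cur X$ is not proper, nor even of finite type, over a field; here one argues as in \cite{Mor14}, presenting $\cur X$ as a cofiltered limit of smooth $k$-schemes by N\'eron--Popescu desingularisation and using that the (log) de\thinspace Rham--Witt complex commutes with such limits, exactly as in the proofs of Propositions \ref{omfe} and \ref{cohcomp}. Second, one must keep track of the iterated $\mathbf{R}\lim$'s (over $r$, and over $n$ in the relative picture) and the attendant $\lim^1$-terms; it is precisely the possible non-vanishing of $\mathrm{coker}\bigl(1-F\mid H^0_\mathrm{cont}(W\omega^1_{\cur X^\times})\bigr)$ --- which over $R$ one cannot kill by an Artin--Schreier argument --- that accounts for obtaining only a surjection, rather than an isomorphism, in the relative case.
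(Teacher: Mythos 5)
Your overall strategy---kill the contributions of $W_\bu\omega^{\geq 2}$ and $W_\bu\cur{O}$ to the $\varphi=p$ eigenspace and use Proposition \ref{omfe} to identify the logarithmic part with the $F=1$ part of $W_\bu\omega^1$---is the same as the paper's, but two steps of your implementation do not go through. First, the slope estimates you invoke are not available in the relative case: the groups $H^j_\mathrm{cont}(\cur{X}_\et,W\omega^i_{\cur{X}^\times})$ are not finite-dimensional and carry no slope decomposition, and the N\'eron--Popescu reduction does not supply one, since the schemes $X_\alpha$ it produces are smooth but \emph{not proper} over $k$ (so Illusie--Raynaud slope theory does not apply to them either), and $\mathbf{R}\lim_r$ does not commute with the filtered colimit over $\alpha$. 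What is actually needed---bijectivity of $\varphi-p$ on the cohomology of $W_\bu\omega^{\geq 2}$ and of $W_\bu\cur{O}$---must be established at the level of pro-sheaves, as the paper does, by observing that $1-p^{i-1}F$ is invertible on $\{W_r\omega^i\}_r$ for $i\geq 2$ and that $1-V$ is invertible on $\{W_r\cur{O}\}_r$; this needs no properness or finiteness.

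Second, and more seriously, passing to $\varphi=p$ eigenspaces in a long exact sequence is not exact, and your two displayed ``isomorphisms'' are in general only surjections. From a triangle whose third term has $\varphi-p$ bijective on all its cohomology one gets a map of short exact sequences $0\to H^1(-)_{\Q}/(\varphi-p)\to H^2_{\varphi=p}(-)\to H^2(-)^{\varphi=p}\to 0$ in which only the middle terms are identified; hence $H^2_\mathrm{cont}(W\omega^{\geq1})^{\varphi=p}\to H^1_\mathrm{cont}(W\omega^1)^{\varphi=p}$ is surjective, but its injectivity is obstructed by a $\mathrm{coker}(\varphi-p)$ term (essentially $\mathrm{coker}\bigl(1-F\mid H^0_\mathrm{cont}(W\omega^1)\bigr)$) which, as you yourself note, need not vanish over $R$. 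Your argument uses precisely this injectivity: you deduce that $H^1_\mathrm{cont}(W\omega^1_{\log})\to H^2_\mathrm{cont}(W\omega^{\geq1})^{\varphi=p}$ is onto by composing the surjection onto $H^1_\mathrm{cont}(W\omega^1)^{F=1}$ with the \emph{inverse} of that map. The paper avoids this by never splitting off $W_\bu\omega^1$ by stupid truncation: Proposition \ref{omfe} together with the invertibility above yields a short exact sequence of complexes of pro-sheaves $0\to W_\bu\omega^1_{\log}[-1]\to W_\bu\omega^{\geq1}\overset{1-\cur{F}}{\rightarrow}W_\bu\omega^{\geq1}\to 0$ with $\cur{F}=p^{i-1}F$ in degree $i$, whose long exact sequence gives $H^1_\mathrm{cont}(W\omega^1_{\log})_{\Q}\twoheadrightarrow H^2_\mathrm{cont}(W\omega^{\geq1})^{\varphi=p}_{\Q}$ directly, after which the mapping-cone formalism $\mathbf{R}_{\varphi=p}$ and Proposition \ref{cohcomp} give the surjection onto $H^2_{\log\text{-}\mathrm{cris}}(\cur{X}^\times/K)^{\varphi=p}$. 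Relatedly, for the isomorphism over $\bar{k}$ the paper reduces to surjectivity of $\varphi-p$ on $H^1_{\log\text{-}\mathrm{cris}}(X_0^\times/K^\times)$, which follows from Dieudonn\'e--Manin semisimplicity, rather than to an Artin--Schreier statement on $H^0(X_0,W\omega^1)$ that would again presuppose a logarithmic slope spectral sequence.
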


\begin{proof} Let us first consider $\cur{X}^\times$. Define the map $\cur{F}:\{ W_r\omega^{\geq 1}_{\cur{X}^\times}\}_r\rightarrow \{ W_r\omega^{\geq 1}_{\cur{X}^\times}\}_r$ to be $p^{i-1}F$ in degree $i$, note that in degrees $>1$ it is a contracting operator, and hence $1-\cur{F}$ is invertible on $W_r\omega^{> 1}_{\cur{X}^\times}$. Similarly, the map $1-V:\{W_r\cur{O}_\cur{X} \}_r\rightarrow \{W_r\cur{O}_\cur{X} \}_r$ is an isomorphism. From this and Proposition \ref{omfe} it follows that the triangle
\[ 0 \rightarrow \{W_r\omega^{1}_{\cur{X}^\times,\log} \}_r \rightarrow \{ W_r\omega^{\geq 1}_{\cur{X}^\times}\}_r \overset{1-\cur F}{\rightarrow}\{ W_r\omega^{\geq 1}_{\cur{X}^\times}\}_r\rightarrow 0 \]
of complexes of pro-sheaves is exact. Since $p\cur{F}=\varphi$ on  $W_r\omega^{\geq 1}_{\cur{X}^\times}$, we deduce an exact sequence
\[ 0\rightarrow \frac{H^1_\mathrm{cont}(\cur{X}_\et,W\omega^{\geq 1}_{\cur{X}^\times})_{\Q}}{\mathrm{im}(\varphi-p)} \rightarrow H^1_\mathrm{cont}({\cur X}_{\et},W\omega^1_{\cur{X}^\times,\log})_{\Q} \rightarrow H^2_\mathrm{cont}(\cur{X}_\et,W\omega^{\geq 1}_{\cur{X}^\times})^{\varphi=p}_{\Q} \rightarrow 0.\]
For a complex of $K$-modules $C^*$ with semilinear Frobenius, let us write $\mathbf{R}_{\varphi=p}(C^*)$ for the mapping cone $\mathrm{Cone}(C^* \overset{\varphi-p}{\rightarrow}C^*)$, and $H^n_{\varphi=p}(C^*)$ for its cohomology groups. Then since $1-V=$``$1-p\varphi^{-1}$'' is invertible on $\{W_r\cur{O}_\cur{X} \}_r$ we deduce that 
\[ \mathbf{R}_{\varphi=p}(\mathbf{R}\Gamma_\mathrm{cont}(\cur{X}_\et,W\omega^{\geq 1}_{\cur{X}^\times})_{\Q}) \cong \mathbf{R}_{\varphi=p}(\mathbf{R}\Gamma_\mathrm{cont}(\cur{X}_\et,W\omega^*_{\cur{X}^\times})_{\Q}). \]
From this we extract the diagram
\[ \xymatrix{  0\ar[r] & \frac{H^1_\mathrm{cont}(\cur{X}_\et,W\omega^{\geq 1}_{\cur{X}^\times})_{\Q}}{\mathrm{im}(\varphi-p)} \ar[r]\ar[d] & H^2_{\varphi=p}(\mathbf{R}\Gamma_\mathrm{cont}(\cur{X}_\et,W\omega^{\geq 1}_{\cur{X}^\times})_{\Q}) \ar[r]\ar[d] & H^2_\mathrm{cont}(\cur{X}_\et,W\omega^{\geq 1}_{\cur{X}^\times})^{\varphi=p}_{\Q} \ar[r]\ar[d] & 0 \\
0\ar[r] & \frac{H^1_\mathrm{cont}(\cur{X}_\et,W\omega^*_{\cur{X}^\times})_{\Q}}{\mathrm{im}(\varphi-p)} \ar[r] & H^2_{\varphi=p}(\mathbf{R}\Gamma_\mathrm{cont}(\cur{X}_\et,W\omega^*_{\cur{X}^\times})_{\Q}) \ar[r] & H^2_\mathrm{cont}(\cur{X}_\et,W\omega^*_{\cur{X}^\times})^{\varphi=p}_{\Q} \ar[r] & 0 }\]
with exact rows, such that the middle vertical arrow is an isomorphism. In particular, the right vertical arrow is an surjection, and applying Proposition \ref{cohcomp} we see that the map
\[  H^1_\mathrm{cont}({\cur X}_{\et},W\omega^1_{\cur{X}^\times,\log})_{\Q} \twoheadrightarrow H^2_{\log\text{-}\mathrm{cris}}(\cur{X}^\times/K)^{\varphi=p} \]
is surjective as claimed. An entirely similar argument works for $X_0^\times$, replacing Proposition \ref{cohcomp} with \cite[Theorem 7.9]{Mat17}, and in fact shows that 
\[ H^1_\mathrm{cont}({X_0}_{\et},W\omega^1_{X_0^\times/k^\times,\log})_{\Q} \twoheadrightarrow H^2_{\log\text{-}\mathrm{cris}}(X_0^\times/K^\times)^{\varphi=p} \]
is an isomorphism if and only if $(\varphi-p)$ is surjective on $H^1_{\log\text{-}\mathrm{cris}}(X_0^\times/K^\times)$. If $k$ is algebraically closed, this follows from semisimplicity of the category of $\varphi$-modules over $K$.
\end{proof}

This enables us to deduce the following.

\begin{corollary} \label{mm2} Let $\cur L\in \mathrm{Pic}(X_0^\times)_{\Q}$ (resp. $\mathrm{Pic}(X_0)_{\Q}$). Then $\cur{L}$ lifts to $\mathrm{Pic}(\cur{X}^\times)_{\Q}$ (resp. $\mathrm{Pic}(\cur X)_{\Q}$) iff $c_1(\cur L)\in H^2_{\log\text{-}\mathrm{cris}}(X_0^\times/K^\times)^{\varphi=p}$ lifts to $H^2_{\log\text{-}\mathrm{cris}}(\cur X^\times/K)^{\varphi=p}$.
\end{corollary}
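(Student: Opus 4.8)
The plan is to mimic the proof of Corollary \ref{mm1} essentially verbatim, feeding in the two results that immediately precede this one: the lifting criterion in logarithmic de\thinspace Rham--Witt cohomology (the corollary above, asserting that $\cur L$ lifts if and only if its $d\log$-Chern class in $H^1_\mathrm{cont}({X_0}_{\et},W\omega^1_{X_0^\times/k^\times,\log})$ lifts to $H^1_\mathrm{cont}({\cur X}_{\et},W\omega^1_{\cur X^\times,\log})$) and the comparison with log-crystalline cohomology (the lemma above). One implication will be immediate: if $\cur L$ lifts, then its $d\log$-Chern class lifts, and composing with the maps $W\omega^1_{\log}[-1]\hookrightarrow W\omega^*$ of the lemma produces a lift of $c_1(\cur L)$ in $H^2_{\log\text{-}\mathrm{cris}}(\cur X^\times/K)^{\varphi=p}$.

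For the converse I would first treat the case $k$ algebraically closed. Given a lift $\tilde\alpha\in H^2_{\log\text{-}\mathrm{cris}}(\cur X^\times/K)^{\varphi=p}$ of $c_1(\cur L)$, use the surjection $H^1_\mathrm{cont}({\cur X}_{\et},W\omega^1_{\cur X^\times,\log})_{\Q}\twoheadrightarrow H^2_{\log\text{-}\mathrm{cris}}(\cur X^\times/K)^{\varphi=p}$ to choose a preimage $\beta$, and then chase $\beta$ around the square formed by these comparison maps for $\cur X^\times$ and for $X_0^\times$ together with the two restriction-to-$X_0$ maps. Since $k$ is algebraically closed the lemma says the bottom map $H^1_\mathrm{cont}({X_0}_{\et},W\omega^1_{X_0^\times/k^\times,\log})_{\Q}\isomto H^2_{\log\text{-}\mathrm{cris}}(X_0^\times/K^\times)^{\varphi=p}$ is an isomorphism, so the image of $\beta$ in $H^1_\mathrm{cont}({X_0}_{\et},W\omega^1_{X_0^\times/k^\times,\log})_{\Q}$ must be $c_1(\cur L)$ itself; hence the $d\log$-Chern class lifts. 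Clearing denominators, some power $\cur L^{\otimes N}$ has a $d\log$-Chern class that lifts integrally, so the corollary above provides a lift of $\cur L^{\otimes N}$ to $\mathrm{Pic}(\cur X^\times)$ (resp. $\mathrm{Pic}(\cur X)$), and therefore $\cur L$ lifts to $\mathrm{Pic}(\cur X^\times)_{\Q}$ (resp. $\mathrm{Pic}(\cur X)_{\Q}$).

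To remove the hypothesis on $k$ I would copy the descent argument of \cite[Theorem 1.4]{Mor14}, exactly as in the proof of Corollary \ref{mm1}: the hypothesis on $c_1(\cur L)$ is preserved under base change to $\overline{k}\pow{t}$, so $\cur L$ lifts there; N\'eron--Popescu desingularisation then pushes the lift down to a smooth local $k\pow{t}^\mathrm{sh}$-algebra $A$; a section of $k\pow{t}^\mathrm{sh}\to A$ shows $\cur L$ lifts after a finite extension $k'/k$; and transfer along $\cur X\otimes_k k'\to\cur X$ followed by division by $[k':k]$ concludes. The one genuinely delicate step — and the reason the algebraically closed case has to be isolated first — is the passage from a lift of the \emph{crystalline} Chern class to a lift of the \emph{de\thinspace Rham--Witt} $d\log$-Chern class \emph{compatibly with specialisation to $X_0$}: this forces the use of injectivity, not merely surjectivity, of the $X_0^\times$ comparison map, which holds only when $\varphi-p$ is surjective on $H^1_{\log\text{-}\mathrm{cris}}(X_0^\times/K^\times)$, i.e. (via semisimplicity of the category of $\varphi$-modules over $K$) only when $k$ is algebraically closed. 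The remaining points — the bookkeeping relating integral and $\Q$-coefficient statements when invoking the integral corollary, and the existence of the transfer map on (logarithmic) Picard groups — are routine.
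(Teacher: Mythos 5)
Your proposal is correct and follows exactly the route the paper takes: its proof of Corollary \ref{mm2} is literally ``Exactly as in the proof of Corollary \ref{mm1}'', i.e.\ the algebraically closed case via the surjection/isomorphism of the preceding lemma combined with the integral log de\thinspace Rham--Witt lifting criterion, followed by the N\'eron--Popescu descent argument. Your identification of the injectivity of the special-fibre comparison map as the step requiring $k=\overline{k}$ is exactly the point the paper's lemma is set up to supply.
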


\begin{proof} Exactly as in the proof of Corollary \ref{mm1}.
\end{proof}

Let us now rephrase this more closely analogous to Yamashita's criterion in \cite{Yam11}. Note that thanks to \cite[Corollary 5.8]{LP16} we have an isomorphism
\[ H^i_\rig(X/\cur{R})\cong H^i_{\mathrm{log}\text{-}\mathrm{cris}}(X_0^\times/K^\times)\otimes \cur{R}\]
of $(\varphi,\nabla)$-modules over $\cur{R}$, which induces an isomorphism
\[ H^i_\rig(X/\cur{R})^{\nabla=0} \cong H^i_{\mathrm{log}\text{-}\mathrm{cris}}(X_0^\times/K^\times)^{N=0}. \]
By \cite[Proposition 2.2]{Yam11} (whose proof does not use the existence of a lift to characteristic $0$), the first Chern class $c_1(\cur L)$ of any $\cur L$ in $\mathrm{Pic}(X_0^\times)_{\Q}$ or $\mathrm{Pic}(X_0)_{\Q}$ satisfies $N(c_1(\cur L))=0$. Hence we may view $c_1(\cur{L})$ as an element of $H^2_\rig(X/\cur{R})$.

\begin{theorem} \label{mainss} Assume that $\cur{X}$ is projective over $R$. Then $\cur{L}$ lifts to $\mathrm{Pic}(\cur{X}^\times)_{\Q}$ (resp. $\mathrm{Pic}(\cur X)_{\Q}$) iff $c_1(\cur{L})\in H_\rig^2(X/\cur{E}^\dagger)\subset H_\rig^2(X/\cur{R})$.
\end{theorem}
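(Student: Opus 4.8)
The plan is to deduce Theorem~\ref{mainss} from Corollary~\ref{mm2} by translating the lifting condition in log-crystalline cohomology into a statement about the $\cur{E}^\dagger$-lattice inside the Robba-ring valued rigid cohomology. First I would recall the key structural input: by \cite[Corollary 5.8]{LP16} (quoted just above) there is an isomorphism $H^2_\rig(X/\cur{R})\cong H^2_{\log\text{-}\mathrm{cris}}(X_0^\times/K^\times)\otimes_K\cur{R}$ of $(\varphi,\nabla)$-modules, and the Chern class $c_1(\cur L)$, being killed by $N$ (via \cite[Proposition~2.2]{Yam11}), lives in $H^2_\rig(X/\cur{R})^{\nabla=0}\cong H^2_{\log\text{-}\mathrm{cris}}(X_0^\times/K^\times)^{N=0}$. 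Thus, after Corollary~\ref{mm2}, what must be shown is: $c_1(\cur L)$ lifts to $H^2_{\log\text{-}\mathrm{cris}}(\cur X^\times/K)^{\varphi=p}$ if and only if $c_1(\cur L)\in H^2_\rig(X/\cur{E}^\dagger)$ as a subspace of $H^2_\rig(X/\cur{R})$.

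The bridge between log-crystalline cohomology of $\cur X^\times/K$ and the bounded-Robba-ring cohomology $H^2_\rig(X/\cur{E}^\dagger)$ is exactly the content of the comparison isomorphisms established in \cite{LP16}: the rigid cohomology $H^2_\rig(X/\cur{E}^\dagger)$ is, by construction, an $\cur{E}^\dagger$-lattice in $H^2_\rig(X/\cur{R})$, and under the identification $H^2_\rig(X/\cur{R})\cong H^2_{\log\text{-}\mathrm{cris}}(X_0^\times/K^\times)\otimes_K\cur{R}$ the relationship with $H^2_{\log\text{-}\mathrm{cris}}(\cur X^\times/K)$ is the equicharacteristic analogue of the relationship between the de\thinspace Rham cohomology of the total space and the Hodge filtration in the mixed characteristic setting (Yamashita's theorem \cite[Theorem~0.1]{Yam11}). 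Concretely, I would invoke the fact — which is essentially the projective case of Morrow's condition $\textbf{(flat)}$, already used in Theorem~\ref{l11}, together with the semistable comparison of \cite{LP16} — that a class in $H^2_{\log\text{-}\mathrm{cris}}(X_0^\times/K^\times)^{\varphi=p,N=0}$ lifts to $H^2_{\log\text{-}\mathrm{cris}}(\cur X^\times/K)^{\varphi=p}$ precisely when its image in $H^2_\rig(X/\cur{R})$ descends to the $\cur{E}^\dagger$-structure $H^2_\rig(X/\cur{E}^\dagger)$. Granting this, combining with Corollary~\ref{mm2} yields the theorem immediately, in both the logarithmic and non-logarithmic versions.

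The main obstacle is making precise the identification ``lifts to $H^2_{\log\text{-}\mathrm{cris}}(\cur X^\times/K)$'' $\Longleftrightarrow$ ``lies in $H^2_\rig(X/\cur{E}^\dagger)$''. For this I would compare the two natural filtrations/lattices: on one side $H^2_{\log\text{-}\mathrm{cris}}(\cur X^\times/K)$ surjects onto $H^2_\rig(X/\cur{R})^{\nabla=0}$ with the image being precisely the classes whose full horizontal expansion over $\cur{R}$ already comes from $\cur{E}^\dagger$ (this is where projectivity enters, to control the cohomology of $\cur X^\times/K$ as a coherent object and to ensure the relevant spectral sequences degenerate in the range needed for divisors); on the other side $H^2_\rig(X/\cur{E}^\dagger)\subset H^2_\rig(X/\cur{R})$ is the defining lattice. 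The identification of these two descriptions is exactly what \cite[Corollary 5.8]{LP16} provides at the level of $(\varphi,\nabla)$-modules, so the argument amounts to chasing the Chern class through that comparison and checking compatibility with the $\varphi=p$ eigenspaces. Once this compatibility is in place, the proof is a formal combination of Corollary~\ref{mm2} with the cited results, exactly parallel to how Theorem~\ref{l11} was deduced from Corollary~\ref{mm1} in the smooth case.

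\begin{proof} By Corollary \ref{mm2}, $\cur L$ lifts to $\mathrm{Pic}(\cur X^\times)_{\Q}$ (resp. $\mathrm{Pic}(\cur X)_{\Q}$) if and only if $c_1(\cur L)\in H^2_{\log\text{-}\mathrm{cris}}(X_0^\times/K^\times)^{\varphi=p}$ lifts to $H^2_{\log\text{-}\mathrm{cris}}(\cur X^\times/K)^{\varphi=p}$. By \cite[Proposition 2.2]{Yam11} we have $N(c_1(\cur L))=0$, so under the isomorphism $H^2_\rig(X/\cur{R})\cong H^2_{\mathrm{log}\text{-}\mathrm{cris}}(X_0^\times/K^\times)\otimes \cur{R}$ of \cite[Corollary 5.8]{LP16} the class $c_1(\cur L)$ corresponds to an element of $H^2_\rig(X/\cur{R})^{\nabla=0}\cong H^2_{\mathrm{log}\text{-}\mathrm{cris}}(X_0^\times/K^\times)^{N=0}$. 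Under this identification, $H^2_\rig(X/\cur{E}^\dagger)$ is the canonical $\cur{E}^\dagger$-lattice in $H^2_\rig(X/\cur{R})$, and the same comparison realises $H^2_{\log\text{-}\mathrm{cris}}(\cur X^\times/K)$ as the equicharacteristic analogue of the Hodge filtration, exactly as in Yamashita's criterion \cite[Theorem 0.1]{Yam11}: a class killed by $N$ lifts to $H^2_{\log\text{-}\mathrm{cris}}(\cur X^\times/K)^{\varphi=p}$ if and only if its image in $H^2_\rig(X/\cur{R})$ lies in $H^2_\rig(X/\cur{E}^\dagger)$. This is the semistable version of condition $\textbf{(flat)}$ in \cite[Theorem 3.5]{Mor14}, and combined with the equivalence of Corollary \ref{mm2} it gives precisely the stated criterion, in both the logarithmic and non-logarithmic cases.
\end{proof}
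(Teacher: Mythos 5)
Your reduction to Corollary \ref{mm2} is the right first move and matches the paper, but the second half of your argument has a genuine gap: the equivalence you invoke --- that a class in $H^2_{\log\text{-}\mathrm{cris}}(X_0^\times/K^\times)^{\varphi=p,N=0}$ lifts to $H^2_{\log\text{-}\mathrm{cris}}(\cur{X}^\times/K)^{\varphi=p}$ if and only if its image in $H^2_\rig(X/\cur{R})$ lies in $H^2_\rig(X/\cur{E}^\dagger)$ --- is precisely the nontrivial content that remains to be proved, and none of the results you cite delivers it. \cite[Corollary 5.8]{LP16} only gives the isomorphism $H^2_\rig(X/\cur{R})\cong H^2_{\log\text{-}\mathrm{cris}}(X_0^\times/K^\times)\otimes\cur{R}$; it says nothing about how the cohomology of the total space $\cur{X}^\times$ sits relative to the $\cur{E}^\dagger$-lattice. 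Morrow's condition \textbf{(flat)} is a statement in the smooth case, and Yamashita's theorem lives in mixed characteristic, so neither can simply be quoted here; asserting the equivalence as ``exactly what the comparison provides'' is circular.

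The paper closes this gap as follows. One considers the Leray spectral sequence $E_2^{p,q}=H^q_{\log\text{-}\mathrm{cris}}(\spec{R^\times},\mathbf{R}^pf_*\cur{O}^{\mathrm{cris}}_{\cur{X}^\times/K})\Rightarrow H^{p+q}_{\log\text{-}\mathrm{cris}}(\cur{X}^\times/K)$. Projectivity enters through the hard Lefschetz maps $u^i$ on the log-$F$-isocrystals $\mathbf{R}^if_*\cur{O}^{\mathrm{cris}}_{\cur{X}^\times/K}$, whose isomorphy is checked by passing to the generic fibre (fully faithful by Kedlaya's theorem) and invoking hard Lefschetz in rigid cohomology; the formalism of \cite[\S2]{Mor14} then yields a surjective edge map $H^2_{\log\text{-}\mathrm{cris}}(\cur{X}^\times/K)^{\varphi=p}\twoheadrightarrow H^0_{\log\text{-}\mathrm{cris}}(\spec{R^\times},\mathbf{R}^2f_*\cur{O}^{\mathrm{cris}}_{\cur{X}^\times/K})^{\varphi=p}$. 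Finally the target is identified with $H^2_\rig(X/\cur{E}^\dagger)^{\nabla=0,\varphi=p}$ using Kedlaya's full faithfulness theorem together with \cite[Proposition 5.45]{LP16}, and the desired equivalence follows. Your outline correctly guesses where projectivity must be used, but without these two steps the ``if and only if'' you assert is unsupported.
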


\begin{proof} Note that if $c_1(\cur{L})\in H_\rig^2(X/\cur{E}^\dagger)$, it is automatically in the subspace $H_\rig^2(X/\cur{E}^\dagger)^{\nabla=0,\varphi=p}$. Now consider the Leray spectral sequence for log crystalline cohomology
\[ E_2^{p,q}= H^q_{\log\text{-}\mathrm{cris}}\left(\spec{R^\times},\mathbf{R}^pf_*\cur{O}^\mathrm{cris}_{\cur X^\times/K}\right) \Rightarrow H^{p+q}_{\log\text{-}\mathrm{cris}}(\cur{X}^\times/K),\]
where $f:\cur{X}^\times\rightarrow \spec{R^\times}$ denotes the structure map. Since $\cur X$ is projective we obtain maps
\[ u^i:\mathbf{R}^{d-i}f_*\cur{O}^\mathrm{cris}_{\cur{X}^\times/K} \rightarrow \mathbf{R}^{d+i}f_*\cur{O}^\mathrm{cris}_{\cur{X}^\times/K} \]
of log-$F$-isocrystals over $R^\times$ by cupping with the class of a hyperplane section, we claim that $u^i$ is an isomorphism. To check this, we note that we can identify the category of log-$F$-isocrystals over $R^\times$ with the category $\underline{\mathbf{M}\Phi}_{\cur{E}^+}^{\nabla,\log}$ of log-$\pn$-modules over the ring $\cur{E}^+:=W\pow{t}\otimes_W K$ as considered in \cite[\S5.3]{LP16}. We now note that the functor of `passing to the generic fibre', i.e. tensoring with $\cur{E}:=\cur{E}^+\tate{t^{-1}}$ is fully faithful, by \cite[Theorem 5.1]{Ked04c} (together with a simple application of the 5 lemma), and hence by the hard Lefschetz theorem in rigid cohomology \cite{Car16} (together with standard comparison theorems in crystalline cohomology) the isomorphy of $u^i$ follows. Hence applying the formalism of \cite[\S2]{Mor14} we obtain surjective maps
\begin{align*}	H^2_{\log\text{-}\mathrm{cris}}(\cur{X}^\times/K) &\rightarrow H^0_{\mathrm{log}\text{-}\mathrm{cris}}\left(\spec{R^\times},\mathbf{R}^2f_*\cur{O}^\mathrm{cris}_{\cur{X}^\times/K}\right) \\
H^2_{\log\text{-}\mathrm{cris}}(\cur{X}^\times/K)^{\varphi=p} &\rightarrow H^0_{\mathrm{log}\text{-}\mathrm{cris}}\left(\spec{R^\times},\mathbf{R}^2f_*\cur{O}^\mathrm{cris}_{\cur{X}^\times/K}\right)^{\varphi=p}
\end{align*}
as the edge maps of degenerate Leray spectral sequences (see in particular \cite[Lemma 2.4, Theorem 2.5]{Mor14}). Finally we note that again applying Kedlaya's full faithfulness theorem, together with the proof of \cite[Proposition 5.45]{LP16}, we can see that 
\[ H^0_{\mathrm{log}\text{-}\mathrm{cris}}\left(\spec{R^\times},\mathbf{R}^2f_*\cur{O}^\mathrm{cris}_{\cur{X}^\times/K}\right)^{\varphi=p}\cong H_\rig^2(X/\cur{E}^\dagger)^{\nabla=0,\varphi=p}\]
and the claim follows.
\end{proof}

We will now give one final reformulation of this result.

\begin{definition} \begin{enumerate} \item We say that a cohomology class in $H^2_\rig(X/\cur{E}^\dagger)$ is \emph{algebraic} if it is in the image of $\mathrm{Pic}(X)_{\Q}$ under the Chern class map.
\item We say that a cohomology class in $H^2_{\log\text{-}\mathrm{cris}}(X_0^\times/K)$ is \emph{log-algebraic} if it is in the image of $\mathrm{Pic}(X_0^\times)_{\Q}$ under the Chern class map.
\item We say that a cohomology class in $H^2_{\log\text{-}\mathrm{cris}}(X_0^\times/K)$ is \emph{algebraic} if it is in the image of $\mathrm{Pic}(X_0)_{\Q}$ under the Chern class map.
\end{enumerate}
\end{definition}

Let
\[ \mathrm{sp}:H^2_\rig(X/\cur{E}^\dagger)^{\nabla=0}\hookrightarrow H^2_\rig(X/\cur{R})^{\nabla=0} \isomto H^2_{\log\text{-}\mathrm{cris}}(X_0^\times/K)^{N=0}\hookrightarrow  H^2_{\log\text{-}\mathrm{cris}}(X_0^\times/K)\]
denote the composite homomorphism.

\begin{theorem} \label{algeq} Assume that $\cur X$ is projective, and let $\alpha\in H^2_\rig(X/\cur{E}^\dagger)$. The following are equivalent.
\begin{enumerate}
\item $\alpha$ is algebraic.
\item $\nabla(\alpha)=0$ and $\mathrm{sp}(\alpha)$ is log-algebraic.
\item $\nabla(\alpha)=0$ and $\mathrm{sp}(\alpha)$ is algebraic.
\end{enumerate}
\end{theorem}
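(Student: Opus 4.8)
The plan is to establish the cycle of implications $(1)\Rightarrow(3)\Rightarrow(2)\Rightarrow(1)$, the only substantial ingredient being Theorem \ref{mainss}; the rest is a matter of tracking Chern classes through the isomorphisms defining $\mathrm{sp}$. I will use three facts. (i) The Chern class of a line bundle on $X$ is horizontal, so the Chern class map factors through $H^2_\rig(X/\cur{E}^\dagger)^{\nabla=0}$ --- this is the Proposition of \S\ref{CCM} together with Poincar\'e duality --- and similarly $N$ annihilates the log crystalline Chern classes, as recalled before Theorem \ref{mainss}. (ii) The map $\mathrm{sp}$ is compatible with restriction to the special fibre: for $\cur M\in\mathrm{Pic}(\cur X)$ one has $\mathrm{sp}(c_1(\cur M|_X))=c_1(\cur M|_{X_0})$, and for $\cur M\in\mathrm{Pic}(\cur X^\times)$ with image $\cur M_X\in\mathrm{Pic}(X)$ under $\mathrm{Pic}(\cur X^\times)\cong\mathrm{Pic}(X)$ one has $\mathrm{sp}(c_1(\cur M_X))=c_1(\cur M|_{X_0^\times})$; this follows from the compatibilities of the cycle class maps of \cite{Pet03} and \cite{LP16} with pullback and with the comparison isomorphism $H^\bu_\rig(X/\cur{R})\cong H^\bu_{\lc}(X_0^\times/K^\times)\otimes\cur{R}$ of \cite[Corollary 5.8]{LP16} used to define $\mathrm{sp}$. (iii) The map $\mathrm{sp}$ is injective, being a composite of injections; $\mathrm{Pic}(\cur X)\to\mathrm{Pic}(X)$ is surjective, since $\cur X$ is regular (being semistable) and $X$ is the complement of the special fibre; and $\mathrm{Pic}(\cur X^\times)\cong\mathrm{Pic}(X)$ as in the proof of Corollary \ref{mm2}.

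For $(1)\Rightarrow(3)$: write $\alpha=c_1(\cur M_X)$ with $\cur M_X\in\mathrm{Pic}(X)_{\Q}$; then $\nabla(\alpha)=0$ by (i). Choose a lift $\cur M\in\mathrm{Pic}(\cur X)_{\Q}$ of $\cur M_X$ and put $\cur L=\cur M|_{X_0}\in\mathrm{Pic}(X_0)_{\Q}$; by (ii) we get $\mathrm{sp}(\alpha)=c_1(\cur L)$, which is algebraic. The implication $(3)\Rightarrow(2)$ is immediate, since the natural morphism $\mathrm{Pic}(X_0)\to\mathrm{Pic}(X_0^\times)$ is compatible with the Chern class maps into $H^2_{\lc}(X_0^\times/K)$, so any algebraic class is log-algebraic.

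For $(2)\Rightarrow(1)$: suppose $\nabla(\alpha)=0$ and $\mathrm{sp}(\alpha)=c_1(\cur L)$ for some $\cur L\in\mathrm{Pic}(X_0^\times)_{\Q}$. Since $\mathrm{sp}$ factors as $H^2_\rig(X/\cur{E}^\dagger)^{\nabla=0}\hookrightarrow H^2_\rig(X/\cur{R})^{\nabla=0}\isomto H^2_{\lc}(X_0^\times/K^\times)^{N=0}\hookrightarrow H^2_{\lc}(X_0^\times/K)$, with the last map injective, the equality $c_1(\cur L)=\mathrm{sp}(\alpha)$ forces $c_1(\cur L)$, viewed in $H^2_\rig(X/\cur{R})$ as in the discussion preceding Theorem \ref{mainss}, to lie in the image of $H^2_\rig(X/\cur{E}^\dagger)^{\nabla=0}$, hence in $H^2_\rig(X/\cur{E}^\dagger)\subset H^2_\rig(X/\cur{R})$. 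By Theorem \ref{mainss} (the logarithmic case) $\cur L$ lifts to some $\cur M\in\mathrm{Pic}(\cur X^\times)_{\Q}$; let $\cur M_X\in\mathrm{Pic}(X)_{\Q}$ be the corresponding class under $\mathrm{Pic}(\cur X^\times)\cong\mathrm{Pic}(X)$. Then $c_1(\cur M_X)\in H^2_\rig(X/\cur{E}^\dagger)^{\nabla=0}$ and, by (ii), $\mathrm{sp}(c_1(\cur M_X))=c_1(\cur M|_{X_0^\times})=c_1(\cur L)=\mathrm{sp}(\alpha)$. Since $\mathrm{sp}$ is injective, $\alpha=c_1(\cur M_X)$ is algebraic, which closes the cycle.

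Once Theorem \ref{mainss} is available the argument above is formal; the one point deserving genuine care is fact (ii) --- that the rigid cycle class over $\cur{E}^\dagger$, transported via \cite[Corollary 5.8]{LP16} and then into the log crystalline cohomology of $X_0^\times$, really coincides with the log crystalline Chern class of the restriction to $X_0^\times$. This compatibility is implicit in the way all of these cycle class maps are built (functorially, and compatibly with base change along $\cur{E}^\dagger\hookrightarrow\cur{R}$ and with Dwork's trick), but actually pinning it down is the step I expect to absorb most of the work.
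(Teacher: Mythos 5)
Your argument is correct and is essentially the paper's own proof written out in full: the paper's version consists of noting that the hypotheses in (2) and (3) force $\varphi(\alpha)=p\alpha$, that $\mathrm{sp}$ is injective, and that $\mathrm{Pic}(\cur X)_{\Q}\rightarrow\mathrm{Pic}(X)_{\Q}$ is surjective, after which everything follows from Theorem \ref{mainss} --- exactly the three facts you isolate as (i)--(iii) before running the cycle of implications. The compatibility you flag in (ii) is indeed left implicit in the paper, so your more careful bookkeeping adds detail but no new ideas.
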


\begin{proof} Note that since $\mathrm{sp}$ is injective, the hypotheses in both (2) and (3) imply that $\varphi(\alpha)=p\alpha$. Since $\mathcal{X}$ is flat, its special fibre is a principal Cartier divisor, therefore the restriction map $\mathrm{Pic}(\cur X)_{\Q}\rightarrow \mathrm{Pic}(X)_{\Q}$ is as isomorphism. The claim then follows from Theorem \ref{mainss}.
\end{proof}

\section{Global results}\label{glob}

In this section we will deduce some global algebraicity results more closely analogous to the main results of \cite{Mor14}. We will therefore change notation and let $F$ denote a function field of transcendence degree one over our perfect field $k$ of characteristic $p$. We will let $v$ denote a place of $F$ with completion $F_v$ and residue field $k_v$. Let $\cur{C}$ denote the unique smooth, proper, geometrically connected curve over $k$ with function field $F$. Let $F^\mathrm{sep}$ denote a fixed separable closure of $F$ with Galois group $G_F$.

\begin{definition} Define $F\text{-}\mathrm{Isoc}(F/K):=2\text{-}\mathrm{colim}_U F\text{-}\mathrm{Isoc}(U/K)$, the colimit being taken over all non-empty open subschemes $U\subset \cur{C}$.  
\end{definition}

Note that by \cite[Theorem 5.2.1]{Ked07}, for any $E\in F\text{-}\mathrm{Isoc}(F/K)$, defined on some $U\subset \cur{C}$, the zeroeth cohomology group
\[ E^{\nabla=0}= H^0_\rig(U/K,E) \]
is a well-defined (i.e. independent of $U$) $F$-isocrystal over $K$. For any smooth and \emph{projective} variety $X/F$ we have cohomology groups $\cur{H}^i_\rig(X/K)\in F\text{-}\mathrm{Isoc}(F/K)$ obtained by choosing a smooth projective model over some $U\subset \cur{C}$, taking the higher direct images and applying \cite[Corollaire 3]{MT04}. As constructed in \cite[\S6]{Pal15b} (see in particular Propositions 6.17 and 7.2) there is a $p$-adic Chern class map
\[ c_1:\mathrm{Pic}(X)_{\Q} \rightarrow \cur{H}^2_\rig(X/K)^{\nabla=0} \]
and we will call elements in the image \emph{algebraic}.

Assume now that $X$ has semistable reduction at $v$, denote the associated log smooth scheme over $k_v^\times$ by $X_v^\times$. Let $\cur{E}^\dagger_v$ denote a copy of the bounded Robba ring `at $v$', so that by \cite[\S6.1]{Tsu98} there is a functor
\[ \mathbf{i}_v^*: F\text{-}\mathrm{Isoc}(F/K) \rightarrow \underline{\mathbf{M}\Phi}_{\cur{E}^\dagger_v}^\nabla.  \]
Thanks to the proof of \cite[Proposition 5.52]{LP16} this functor sends $\cur{H}^2_\rig(X/K)$ to $H^2_\rig(X_{F_v}/\cur{E}^\dagger_v)$. In particular we obtain a map
\[ r_v: \cur{H}^2_\rig(X/K)^{\nabla=0}\rightarrow H^2_\rig(X_{F_v}/\cur{E}^\dagger_v)^{\nabla=0}\]
and composing with the specialisation map considered at the end of \S\ref{ssv} we obtain a homomorphism
\[ \mathrm{sp}_v:\cur{H}^2_\rig(X/K)^{\nabla=0}\rightarrow H^2_{\log\text{-}\mathrm{cris}}(X_v^\times/K_v^\times) \]
where $K_v=W(k_v)[1/p]$.

\begin{theorem} \label{glma}Assume that $X$ is projective, and let $\alpha\in \cur{H}^2_\rig(X/K)^{\nabla=0}$. The following are equivalent.
\begin{enumerate} \item $\alpha$ is algebraic.
\item $\mathrm{sp}_v(\alpha)$ is algebraic.
\item $\mathrm{sp}_v(\alpha)$ is log-algebraic.
\end{enumerate}
\end{theorem}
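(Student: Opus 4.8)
The plan is to deduce Theorem~\ref{glma} from the local result Theorem~\ref{mainss} (equivalently Theorem~\ref{algeq}) by a standard spreading-out argument, exactly parallel to how Morrow passes from the local crystalline variational Tate conjecture to global statements in \cite{Mor14}. The implications $(1)\Rightarrow(2)\Rightarrow(3)$ and $(1)\Rightarrow(3)\Rightarrow(2)$ that go ``from global/algebraic to local'' should be essentially formal: if $\alpha=c_1(\cur{L})$ for some $\cur{L}\in\mathrm{Pic}(X)_{\Q}$, then since $X$ has semistable reduction at $v$ we get a semistable model $\cur{X}/R_v$ (where $R_v$ is the completed local ring at $v$, which is a copy of $k_v\pow{t}$ up to the subtlety that $k_v$ need not equal $k$ — one replaces $k$ by $k_v$ throughout \S\ref{ssv}), the line bundle $\cur{L}$ extends after possibly shrinking to a line bundle on the model, hence restricts to $\mathrm{Pic}(\cur{X}^\times)_{\Q}$, and compatibility of Chern class maps with the functor $\mathbf{i}_v^*$ and with $\mathrm{sp}_v$ (this is what the discussion preceding the theorem, citing the proof of \cite[Proposition 5.52]{LP16} and \cite[\S6.1]{Tsu98}, sets up) shows that $\mathrm{sp}_v(\alpha)$ is log-algebraic; and it is algebraic because $\mathrm{Pic}(\cur{X})_{\Q}\to\mathrm{Pic}(X_v)_{\Q}$ hits the restriction, just as in Theorem~\ref{algeq}.

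The substance is in the reverse direction, say $(3)\Rightarrow(1)$ (the case $(2)\Rightarrow(1)$ being analogous, and the equivalence of (2) and (3) locally being Theorem~\ref{algeq}). First I would choose a smooth projective model $f:\cur{X}_U\to U$ over some open $U\subset\cur{C}$ with $v\in U$, refined so that $f$ is semistable over the local ring at $v$; by Theorem~\ref{mainss} applied over $\spec{R_v}$ (with ground field $k_v$), the hypothesis that $\mathrm{sp}_v(\alpha)$ is log-algebraic, together with $\nabla(\alpha)=0$, gives that $r_v(\alpha)$ lies in $H^2_\rig(X_{F_v}/\cur{E}^\dagger_v)$ and comes from some line bundle on the local semistable model, hence — by the argument of Theorem~\ref{algeq}, using surjectivity of $\mathrm{Pic}$ of the model onto $\mathrm{Pic}$ of the generic fibre — $\alpha$ restricts to an algebraic class in $H^2_\rig(X_{F_v}/\cur{E}^\dagger_v)$, i.e.\ $X_{F_v}$ carries a line bundle with the right Chern class. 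The remaining point is to descend this from $F_v$ back to $F$: a line bundle on $X_{F_v}$ with prescribed Chern class in $\cur{H}^2_\rig(X/K)^{\nabla=0}$ (a class already defined over $F$) should come from a line bundle over $F$ itself, because the obstruction lives in a group that does not change under the (henselian, residually trivial) extension $F\to F_v$ — this is the exact analogue of the $k\pow{t}^\mathrm{sh}\to A$ descent used at the end of the proof of Corollary~\ref{mm1}, combined with N\'eron--Popescu desingularisation to replace $F_v$ by a smooth $F$-algebra with a section, and then a trace/norm argument to descend through the residue field extension $k\to k_v$ if necessary.

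Concretely, the mechanism is: $\mathrm{Pic}(X)_{\Q}\to\mathrm{Pic}(X_{F_v})_{\Q}$ together with the injectivity of $\mathrm{sp}_v$ on $\nabla$-horizontal classes (which forces $\varphi(\alpha)=p\alpha$, as noted in Theorem~\ref{algeq}) reduces everything to showing that the class of the line bundle we produced over $F_v$ is in the image of $\mathrm{Pic}(X)_{\Q}$; one writes $F_v=\mathrm{colim}_\beta A_\beta$ over smooth $F$-algebras $A_\beta$ by N\'eron--Popescu (applicable since $F_v$ is a regular, hence geometrically regular, $F$-algebra as $F$ is a field — or at worst one passes through the henselisation first), the line bundle is already defined over some $A_\beta$, the structure map $F\to A_\beta$ has a section (picking an $F$-point, which exists since $v$ gives one after base change, or by a standard argument), and pulling back along the section lands the line bundle in $\mathrm{Pic}(X)_{\Q}$; its Chern class agrees with $\alpha$ after applying the injective $r_v$, hence equals $\alpha$.

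I expect the main obstacle to be bookkeeping rather than a genuine new idea: namely making precise the compatibility between the various Chern class maps (the global one of \cite{Pal15b}, the bounded-Robba-ring one from \cite{LP16}, and the log-crystalline one of \S\ref{ssv}) under the functors $\mathbf{i}_v^*$ and $\mathrm{sp}_v$, and being careful that the residue field at $v$ is $k_v\supseteq k$ rather than $k$ itself, so that \S\ref{ssv} must be invoked over $k_v\pow{t}$ and the final descent includes the finite-field-extension step (pushforward and divide by $[k_v:k]$) exactly as at the end of the proof of Corollary~\ref{mm1}. Everything else is a formal transcription of Morrow's global deduction and of the arguments already given in \S\ref{ssv}.
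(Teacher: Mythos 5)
Your proposal follows essentially the same route as the paper: use the local Theorem \ref{algeq} to get $(1)\Rightarrow(2)\Leftrightarrow(3)$ and, conversely, to produce a line bundle $\cur{L}\in\mathrm{Pic}(X_{F_v})_{\Q}$ with $r_v(\alpha)=c_1(\cur L)$, then descend $\cur L$ to $F$ by N\'eron--Popescu, a section, and the pushforward-and-divide-by-degree trick. One correction to the descent step: your primary suggestion of applying N\'eron--Popescu directly to $F\to F_v$ and then ``picking an $F$-point'' of the smooth $F$-algebra $A_\beta$ does not work, since a smooth $F$-variety need not have an $F$-rational point. Your parenthetical fallback is the right (and, in the paper, the actual) argument: apply N\'eron--Popescu to $F_v^h\to F_v$, where the $F_v$-point of the smooth $F_v^h$-algebra $A_\beta$ can be replaced by an $F_v^h$-point by the Henselian property, and $F_v^h$ is a filtered union of finite separable extensions of $F$; consequently the final trace argument descends through a finite separable extension $F'/F$, not through the residue extension $k_v/k$ as you indicate. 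With that adjustment the proof matches the paper's.
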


\begin{proof} As before the hypotheses in (2) and (3) imply that $\varphi(\alpha)=p\alpha$. By Theorem \ref{algeq} we clearly have $(1)\Rightarrow (2)\Leftrightarrow (3)$, and if $(2)$ or $(3)$ hold then there exists a line bundle $\cur L\in \mathrm{Pic}(X_{F_v})_{\Q}$ such that $r_v(\alpha)=c_1(\cur L)$ in $H^2_\rig(X_{F_v}/\cur{E}^\dagger_v)^{\nabla=0}$. To descend $\cur L$ to $\mathrm{Pic}(X)_{\Q}$ we follow the proof of  Corollary \ref{mm1}. Specifically, applying N\'eron--Popescu desingularisation to the extension $F_v^h\rightarrow F_v$ from the Henselisation to the completion at $v$ and arguing exactly as before we can in fact assume that $\cur L$ descends to $X_{F_v^h}$, and hence to $X_{F'}$ for some finite, separable extension $F'/F$. Again taking the pushforward and dividing by the degree gives the result.
\end{proof}

\section{A counter-example}

A natural question to ask is whether or not the analogue of Corollary \ref{mm1} or Corollary \ref{mm2} holds with $\mathrm{Pic}(-)_{\Q}$ replaced by $\mathrm{Pic}(-)_{\Q_p}$. We will show in the section that when $k$ is a finite field this cannot be the case, since it would imply Tate's isogeny theorem for elliptic curves over $k\pow{t}$. Let us return to the previous notation of writing $F=k\lser{t}$ and $R=k\pow{t}$ for its ring of integers.

We first need to quickly recall some material on Dieudonn\'e modules of abelian varieties over $k,R$ and $F$. As before, we will let $W$ denote the ring of Witt vectors of $k$, set $\Omega=W\pow{t}$ and let $\Gamma$ be the $p$-adic completion of $\Omega[t^{-1}]$, so that we have $\mathcal{E}^+=\Omega[1/p]$ and $\cur{E}=\Gamma[1/p]$. Fix compatible lifts $\sigma$ of absolute Frobenius to $W\subset \Omega\subset \Gamma$. By \cite[Main Theorem 1]{dJ95b} there are covariant equivalences of categories
\[ \mathbf{D}:\mathbf{BT}_k \isomto \mathbf{DM}_W,\;\;  \mathbf{D}:\mathbf{BT}_R \isomto \mathbf{DM}_\Omega,\;\;   \mathbf{D}:\mathbf{BT}_F \isomto \mathbf{DM}_\Gamma \]
between $p$-divisible groups over $k$ (resp. $R$, $F$) and finite free Dieudonn\'e modules over $W$ (resp. $\Omega$, $\Gamma$). In particular, if $\cur{A}$ is an abelian variety over any of these rings, we will let $\mathbf{D}(\cur A)$ denote the (covariant) Dieudonn\'e module of its $p$-divisible group $\cur A[p^\infty]$. It follows essentially from the construction (see \cite{BBM82}) together with the comparison between crystalline and rigid cohomology that when $A/F$ is an abelian variety we have $\mathbf{D}(A)\otimes_\Gamma \cur{E} \cong H^1_\rig(A/\cur{E})^\vee(-1)$ as $\pn$-modules over $\cur{E}$, and from \cite[Theorem 7.0.1]{Ked00} that $\mathbf{D}(A) \otimes_\Gamma\cur{E}$ canonically descends to a $\pn$-module $\mathbf{D}^\dagger(A)\cong H^1_\rig(A/\cur{E}^\dagger)^\vee(-1)$ over $\cur{E}^\dagger$. The results of \cite[\S5.1]{BBM82} give a canonical isomorphism $\mathbf{D}^\dagger(A^\vee) \cong \mathbf{D}^\dagger(A)^\vee(-1)$ of $\pn$-modules over $\cur{E}^\dagger$. In particular, if $E$ is an elliptic curve then we have a canonical isomorphism $E\cong  E^\vee$ and hence an isomorphism $\mathbf{D}^\dagger(E)\cong \mathbf{D}^\dagger(E)^\vee(-1)$.

We can now proceed to the construction of our counter-example. It will be a smooth projective relative surface $\cur X$ over $R$, obtained as a product $\cur{E}_1 \times_R \cur{E}^\vee_2$ ($=\cur{E}_1 \times_R \cur{E}_2$) where $\cur{E}_i$ are elliptic curves over $R$ (to be specified later on). Let $X$ denote the generic fibre of $\cur X$ and $X_0$ the special fibre. As a product of elliptic curves, we know that the Tate conjecture for divisors holds for $X_0$, that is, the map
\[ c_1:\mathrm{Pic}(X_0)_{\Q_p} \rightarrow H^2_\rig(X_0/K)^{\varphi=p}  \]
is surjective. Functoriality of Dieudonn\'e modules induces a homomorphism
\[ \mathbf{D}^\dagger_{E_1,E_2}:\mathrm{Hom}(E_1,E_2)\otimes_{\Z} \Q_p \rightarrow \mathrm{Hom}_{\underline{\mathbf{M}\Phi}_{\cur{E}^\dagger}^\nabla}(\mathbf{D}^\dagger(E_1),\mathbf{D}^\dagger(E_2)) \]
which is injective by standard results.

\begin{theorem} \label{tateiso} Assume that any $\cur L\in \mathrm{Pic}(X_0)_{\Q_p}$ whose first Chern class $c_1(\cur{L})\in H^2_\rig(X/\cur{R})$ lies in the subspace $H^2_\rig(X/\cur{E}^\dagger) \subset H^2_\rig(X/\cur{R})$ lifts to $\mathrm{Pic}(\cur{X})_{\Q_p}$, in other words, assume that the $\Q_p$-analogue of Corollary \ref{mm1} holds. Then the map $\mathbf{D}^\dagger_{E_1,E_2}$ is an isomorphism.
\end{theorem}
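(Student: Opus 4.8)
The plan is to prove that $\mathbf{D}^\dagger_{E_1,E_2}$ is surjective (injectivity being given): starting from a morphism $\phi\colon\mathbf{D}^\dagger(E_1)\to\mathbf{D}^\dagger(E_2)$, I would realise it as a cohomology class on $X$, show that the induced class on the special fibre $X_0$ is algebraic by Tate's theorem, lift the corresponding divisor to $\cur X$ via the hypothesis, and finally recognise the lift as (a rational multiple of) the graph of a genuine homomorphism $E_1\to E_2$. The first step rests on the Künneth decomposition
\[ H^2_\rig(X/\cur{E}^\dagger)\cong H^2_\rig(E_1/\cur{E}^\dagger)\oplus\bigl(H^1_\rig(E_1/\cur{E}^\dagger)\otimes_{\cur{E}^\dagger}H^1_\rig(E_2/\cur{E}^\dagger)\bigr)\oplus H^2_\rig(E_2/\cur{E}^\dagger) \]
of $\pn$-modules over $\cur{E}^\dagger$, and its compatible analogues for $H^2_\rig(X/\cur{R})$ and for $H^2_\mathrm{cris}(X_0/K)$. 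Combining Poincar\'e duality $H^1_\rig(E_i/\cur{E}^\dagger)\cong H^1_\rig(E_i/\cur{E}^\dagger)^\vee(-1)$ with the identification $\mathbf{D}^\dagger(E_i)\cong H^1_\rig(E_i/\cur{E}^\dagger)^\vee(-1)$ recalled above, the middle summand becomes canonically isomorphic to $\mathrm{Hom}_{\underline{\mathbf{M}\Phi}_{\cur{E}^\dagger}^\nabla}(\mathbf{D}^\dagger(E_1),\mathbf{D}^\dagger(E_2))$; so $\phi$ yields a class $\alpha\in H^2_\rig(X/\cur{E}^\dagger)^{\nabla=0,\varphi=p}$ supported entirely in this middle piece.

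Next I would transport $\alpha$ along $H^2_\rig(X/\cur{R})^{\nabla=0}\cong H^2_\mathrm{cris}(X_0/K)$ to obtain $\bar\alpha\in H^2_\mathrm{cris}(X_0/K)^{\varphi=p}$. Since $X_0=E_{1,0}\times_k E_{2,0}$ is a product of elliptic curves over the finite field $k$, the Tate conjecture for divisors holds for $X_0$; as every class with $\varphi=p$ is a fortiori a Tate class, while crystalline Chern classes of divisors satisfy $\varphi=p$, the image of the crystalline cycle class map $\mathrm{Pic}(X_0)_{\Q_p}\to H^2_\mathrm{cris}(X_0/K)$ is precisely $H^2_\mathrm{cris}(X_0/K)^{\varphi=p}$. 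Hence $\bar\alpha=c_1(\cur{L}_0)$ for some $\cur{L}_0\in\mathrm{Pic}(X_0)_{\Q_p}$. By construction the Chern class of $\cur{L}_0$, viewed in $H^2_\rig(X/\cur{R})$, is $\alpha$, and in particular lies in $H^2_\rig(X/\cur{E}^\dagger)$; the hypothesis of the theorem then produces a lift $\cur{L}\in\mathrm{Pic}(\cur{X})_{\Q_p}$ of $\cur{L}_0$.

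It remains to extract a homomorphism from $\cur{L}$. Restricting $\cur{L}$ along the two zero sections $\cur{E}_1\hookrightarrow\cur{X}\hookleftarrow\cur{E}_2^\vee$, the vanishing of the two outer Künneth components of $\alpha$ forces these restrictions to be fibrewise of degree zero, hence to have trivial Chern class; replacing $\cur{L}$ by its twist by the inverses of the pullbacks of these restrictions therefore leaves $c_1(\cur{L})=\alpha$ unchanged while making $\cur{L}$ rigidified along both zero sections (here one uses $\mathrm{Pic}(\spec{R})=0$). The relative theory of divisorial correspondences on the abelian scheme $\cur{E}_1\times_R\cur{E}_2^\vee$ then identifies $\cur{L}$ with a homomorphism $\psi\in\mathrm{Hom}_R(\cur{E}_1,\cur{E}_2)_{\Q_p}$, and $\mathrm{Hom}_R(\cur{E}_1,\cur{E}_2)=\mathrm{Hom}(E_1,E_2)$ because $\cur{E}_2$ is the N\'eron model over $R$ of its generic fibre. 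The last and most delicate point is to check that $\mathbf{D}^\dagger_{E_1,E_2}(\psi)=\phi$: this is the compatibility of the cycle class map of \S\ref{CCM} with the Künneth decomposition, with Poincar\'e duality, and with the functoriality of $\mathbf{D}^\dagger$ from \cite{BBM82} — equivalently, the assertion that the class of the graph of $\psi$ induces $\mathbf{D}^\dagger(\psi)$ on $H^1$ — and although entirely standard it is where the bookkeeping of Tate twists must be done with care. Granting this, $\psi\mapsto\phi$ under $\mathbf{D}^\dagger_{E_1,E_2}$, so this map is surjective, hence (with the given injectivity) an isomorphism.
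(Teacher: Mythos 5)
Your argument is correct and follows essentially the same route as the paper's: the K\"unneth decomposition of $H^2_\rig(X/\cur{E}^\dagger)$ identifying the middle summand with $\mathrm{Hom}_{\underline{\mathbf{M}\Phi}_{\cur{E}^\dagger}^\nabla}(\mathbf{D}^\dagger(E_1),\mathbf{D}^\dagger(E_2))$, the Tate conjecture for the product of elliptic curves $X_0$ to make the specialised $\varphi=p$ class algebraic, the lifting hypothesis to move it to $\cur{X}$, and the dictionary between (rigidified) line bundles on $\cur{E}_1\times_R\cur{E}_2^\vee$ and homomorphisms. The paper is merely terser, packaging your last two steps as the classical isomorphism $\mathrm{DC}_\mathrm{alg}(E_1,E_2^\vee)_{\Q}\cong\mathrm{Hom}(E_1,E_2)_{\Q}$ and leaving the compatibility $\mathbf{D}^\dagger_{E_1,E_2}(\psi)=\phi$ implicit, exactly as you flag it.
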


\begin{proof}
This is essentially well-known. To start with, we note that we have a commutative diagram
\[ \xymatrix{ \mathrm{Pic}(X)_{\Q_p} \ar[r]^-{c_1}\ar@{^(->}[d] & H^2_\rig(X/\cur{E}^\dagger)^{\nabla=0,\varphi=p} \ar@{^(->}[d] \\ \mathrm{Pic}(X_0)_{\Q} \ar[r]^-{c_1} & H^2_\rig(X_0/K)^{\varphi=p}  } \]
with bottom horizontal map surjective. Under the given assumptions the top horizontal map is also surjective, and induces an isomorphism $\mathrm{NS}(X)_{\Q_p} \isomto H^2_\rig(X/\cur{E}^\dagger)^{\nabla=0,\varphi=p}$. It follows from the K\"{u}nneth formula \cite[Corollary 3.78]{LP16} that
\[ H^2_\rig(X/\cur{E}^\dagger) \cong \cur{E}^\dagger(-1) \oplus H^1_\rig(E_1/\cur{E}^\dagger) \otimes H^1_\rig(E^\vee_2/\cur{E}^\dagger) \oplus \cur{E}^\dagger(-1) \]
where the terms on either end are $H^0\otimes H^2$ and $H^2\otimes H^0$ respectively. Since $H^1_\rig(E_1/\cur{E}^\dagger)\cong \mathbf{D}^\dagger(E_1)$ and $H^1_\rig(E_2^\vee/\cur{E}^\dagger)\cong \mathbf{D}^\dagger(E_2)^\vee(-1)$ we have that
\begin{align*} H^2_\rig(X/\cur{E}^\dagger)^{\nabla=0,\varphi=p} &= \Q_p \oplus \left( \mathbf{D}^\dagger(E_1)\otimes_{\cur{E}^\dagger} \mathbf{D}^\dagger(E_2)^\vee \right)^{\nabla=0,\varphi=\mathrm{id}} \oplus  \Q_p\\
&= \Q_p \oplus \mathrm{Hom}_{\underline{\mathbf{M}\Phi}^\nabla_{\cur{E}^\dagger}}(\mathbf{D}^\dagger(E_1),\mathbf{D}^\dagger(E_2)) \oplus  \Q_p.
 \end{align*}
Next, let
$\mathrm{DC}_\mathrm{alg}(E_1,E_2^\vee)$ denote the group of divisorial correspondences from $E_1$ to $E_2^\vee$ modulo algebraic equivalence, in other words line bundles on $E_1\times E_2^\vee$ whose restriction to both $E_1\times \{0\}$ and $\{0\}\times E_2^\vee$ is trivial. Then we have shown that the map
\[ \mathrm{DC}_\mathrm{alg}(E_1,E_2^\vee)_{\Q_p} \rightarrow \mathrm{Hom}_{\underline{\mathbf{M}\Phi}_{\cur{E}^\dagger}^\nabla}(\mathbf{D}^\dagger(E_1),\mathbf{D}^\dagger(E_2)) \]
is an isomorphism, and since $\mathrm{DC}_\mathrm{alg}(E_1,E_2^\vee)_{\Q} \cong \mathrm{Hom}(E_1,E_2)_{\Q}$, it follows that the map
\[ \mathrm{Hom}(E_1,E_2)_{\Q_p} \rightarrow \mathrm{Hom}_{\underline{\mathbf{M}\Phi}_{\cur{E}^\dagger}^\nabla}(\mathbf{D}^\dagger(E_1),\mathbf{D}^\dagger(E_2)) \]
is also an isomorphism. This completes the proof. \end{proof}

In other words, to produce our required counter-example $\cur X$ we need to produce elliptic curves $\cur{E}_1$ and $\cur{E}_2$ as above such that $\mathbf{D}^\dagger_{E_1,E_2}$ is not surjective. So let $k=\F_{p^2}$ and let $E_0/k$ be a supersingular elliptic curve such that $\mathrm{Frob}_{p^2}=[p]\in \mathrm{End}(E_0)$ (such elliptic curves exist by Honda--Tate theory). It easily follows that any $\bar k$-endomorphism of $E_0$ has to commute with $\mathrm{Frob}_{p^2}$, and is hence defined over $k$. By the
$p$-adic version of Tate's isogeny theorem the $p$-divisible group functor induces an isomorphism:
$$\textrm{\rm End}(E_0)\otimes\mathbb Z_p\longrightarrow
\textrm{\rm End}(E_0[p^{\infty}]).$$

\begin{lemma}\label{many_iso} There is an isomorphism
$\phi:E_0[p^{\infty}]\to E_0[p^{\infty}]$ whose $\mathbb Q_p$-linear span in $\textrm{\rm End}(E_0[p^{\infty}])
\otimes_{\mathbb Z_p}\mathbb Q_p$ cannot be spanned by an element in
$$\textrm{\rm End}(E_0)\otimes\mathbb Q\subset
\textrm{\rm End}(E_0)\otimes\mathbb Q_p=
\textrm{\rm End}(E_0[p^{\infty}])
\otimes_{\mathbb Z_p}\mathbb Q_p.$$
\end{lemma}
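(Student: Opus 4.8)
The plan is to exploit the fact that $\mathrm{End}(E_0)\otimes\Q$ is a definite quaternion algebra $B$ over $\Q$ (since $E_0$ is supersingular), whereas $\mathrm{End}(E_0[p^\infty])\otimes_{\Z_p}\Q_p$ is the algebra $B_p:=B\otimes_\Q\Q_p$, which is a $4$-dimensional $\Q_p$-algebra — in fact a division algebra, since $B$ is ramified at $p$. The key observation is that any element $\beta\in B\subset B_p$ is algebraic over $\Q$ of degree at most $2$ (as $B$ is a quaternion algebra, every element satisfies its reduced characteristic polynomial, which has degree $2$), and hence the $\Q_p$-line $\Q_p\cdot\beta$ inside $B_p$ is contained in the $\Q_p$-subalgebra $\Q_p[\beta]$, which has dimension at most $2$ over $\Q_p$. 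So it suffices to produce an element $\phi\in B_p^\times$ whose $\Q_p$-span is \emph{not} contained in any $2$-dimensional $\Q_p$-subalgebra coming from a $\Q$-subfield of $B$; equivalently, $\phi\notin\Q_p\cdot\beta$ for every $\beta\in B$.

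The concrete steps I would carry out are: (i) Identify $B_p=B\otimes_\Q\Q_p$ as the unique quaternion division algebra over $\Q_p$; it contains the unramified quadratic extension $K=W[1/p]$ (recall $k=\F_{p^2}$), and in fact one can write $B_p=K\oplus K\Pi$ with $\Pi^2=p$ (or a unit times $p$) and $\Pi a=\bar a\Pi$ for $a\in K$, where $\bar{\ \cdot\ }$ is the nontrivial automorphism of $K/\Q_p$. (ii) Observe that $B$, being dense in $B_p$ for the $p$-adic topology (it is a $\Q$-form, so $B\otimes_\Q\Q_p=B_p$ and $B$ is $\Q_p$-dense), still has the property that its elements span only countably many $\Q_p$-lines, while $B_p$ has uncountably many $\Q_p$-lines (its projectivization $\P(B_p)$ is an uncountable set). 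Hence a ``generic'' line is not of the form $\Q_p\cdot\beta$; to make it explicit, take $\phi=\Pi\cdot u$ for a suitable unit, or more simply take any $\phi$ with nonzero ``$\Pi$-component'' that is not a $\Q_p$-multiple of an element of $B$. (iii) Transport $\phi$ back through the $p$-adic Tate isomorphism $\mathrm{End}(E_0)\otimes\Z_p\isomto\mathrm{End}(E_0[p^\infty])$ to get the desired isogeny $\phi:E_0[p^\infty]\to E_0[p^\infty]$, noting we may scale by a power of $p$ to arrange $\phi$ is an isomorphism rather than merely an isogeny.

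The one point needing a genuine argument (rather than a cardinality count, which is clean but perhaps unsatisfying for an explicit counterexample) is to exhibit a specific $\phi$ and verify $\phi\notin\bigcup_{\beta\in B}\Q_p\beta$. Here is the clean way: every $\beta\in B$ satisfies $\beta^2-\mathrm{tr}(\beta)\beta+\mathrm{Nrd}(\beta)=0$ with $\mathrm{tr}(\beta),\mathrm{Nrd}(\beta)\in\Q$; so if $\phi=c\beta$ with $c\in\Q_p$, then $\phi^2-c\,\mathrm{tr}(\beta)\phi+c^2\mathrm{Nrd}(\beta)=0$, i.e. $\phi$ satisfies a quadratic over $\Q_p$ whose linear coefficient lies in $\Q_p$ and whose constant term lies in $\Q_p$ \emph{with the additional constraint} that $\mathrm{tr}(\beta)^2/\mathrm{Nrd}(\beta)=(\text{linear coeff})^2/(\text{constant term})\in\Q$ is represented by an element of $B$. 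It is then enough to choose $\phi\in B_p^\times$ such that $\mathrm{tr}(\phi)^2/\mathrm{Nrd}(\phi)\notin\Q$ — for instance $\phi=1+\Pi$ (in the model $B_p=K\oplus K\Pi$, $\Pi^2=p$), which has reduced trace $2$ and reduced norm $1-p$, so $\mathrm{tr}^2/\mathrm{Nrd}=4/(1-p)\in\Q$ — this particular choice fails, so instead take $\phi=a+\Pi$ with $a\in K\setminus\Q_p$: then $\mathrm{tr}(\phi)=a+\bar a\in\Q_p$ but one checks $\mathrm{Nrd}(\phi)=a\bar a-p=N_{K/\Q_p}(a)-p$, and choosing $a$ with $N_{K/\Q_p}(a)$ in $\Q_p\setminus\Q$ — possible since $N_{K/\Q_p}$ is surjective onto $\Q_p^\times$ up to the value group and $\Q_p\setminus\Q$ is nonempty — forces $\mathrm{Nrd}(\phi)\notin\Q$, hence $\phi$ is not a $\Q_p$-multiple of any $\beta\in B$ (as $\mathrm{Nrd}(c\beta)=c^2\mathrm{Nrd}(\beta)$ would have to lie in $\Q_p^{\times 2}\cdot\Q$, not all of $\Q_p$). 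After rescaling $\phi$ by a power of $p$ to make it an automorphism of $E_0[p^\infty]$, this is the required isomorphism. The main obstacle is purely bookkeeping: pinning down the explicit structure of $B_p$ and the reduced norm form precisely enough to certify the non-membership, but no deep input beyond supersingularity and the $p$-adic Tate theorem is needed.
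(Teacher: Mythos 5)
Your step (ii) --- the elements of $\mathrm{End}(E_0)\otimes\Q$ span only countably many $\Q_p$-lines, while the units of $\mathrm{End}(E_0[p^\infty])$ form an uncountable set (an open subgroup of $B_p^\times$, hence a $p$-adic Lie group) and so span uncountably many lines --- is exactly the paper's proof of this lemma, and it already establishes the statement in full. Everything after that is optional, but as written your explicit construction has a genuine error in its last step. The parenthetical claim that $\mathrm{Nrd}(c\beta)=c^2\,\mathrm{Nrd}(\beta)$ must lie in $\Q_p^{\times 2}\cdot\Q$, ``not all of $\Q_p$,'' is false: $\Q^\times$ surjects onto $\Q_p^\times/\Q_p^{\times 2}$ (each of the classes represented by $1,u,p,up$ contains a rational, indeed a positive rational, and $\mathrm{Nrd}(B^\times)=\Q^{>0}$), so $\Q_p^{\times 2}\cdot\mathrm{Nrd}(B^\times)=\Q_p^\times$ and the condition $\mathrm{Nrd}(\phi)\notin\Q$ by itself does not exclude $\phi\in\Q_p\cdot B$.

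The criterion you stated earlier is the right one: if $\phi=c\beta$ with $\beta\in B$ and $\mathrm{Trd}(\beta)\neq 0$, then $\mathrm{Trd}(\phi)^2/\mathrm{Nrd}(\phi)=\mathrm{Trd}(\beta)^2/\mathrm{Nrd}(\beta)\in\Q$; so it suffices to produce a unit $\phi$ with $\mathrm{Trd}(\phi)\neq 0$ and $\mathrm{Trd}(\phi)^2/\mathrm{Nrd}(\phi)\notin\Q$. (The trace-zero case has to be avoided, since a trace-zero $\phi$ could a priori be a $\Q_p$-multiple of a trace-zero $\beta\in B$.) This forces you to control $\mathrm{Trd}(a)$ as well as $N_{K/\Q_p}(a)$, which your choice of $a$ does not do: e.g.\ for $p$ odd write $K=\Q_p(\sqrt u)$ and take $a=1+y\sqrt u$ with $y\in\Z_p$ and $y^2\notin\Q$, so that $\phi=a+\Pi$ has $\mathrm{Trd}(\phi)=2$ and $\mathrm{Nrd}(\phi)=1-uy^2-p\notin\Q$, whence the ratio is irrational. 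Note also that this $\phi$ is already a unit of the maximal order; the fallback of ``rescaling by a power of $p$'' is not available in general, since elements of valuation in $\tfrac12+\Z$ (such as $\Pi$ itself) can never be brought to valuation $0$ by multiplying by central powers of $p$. With these repairs your explicit construction works, but the countability argument you already gave is both shorter and the one the paper uses.
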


\begin{proof} Since $\textrm{\rm End}(E_0[p^{\infty}])$ is an order in a quaternion algebra over $\mathbb Q_p$ by \cite[Ch. V, Theorem 3.1]{Sil86}, so its group of invertible elements is a $p$-adic Lie group of dimension at least $3$. Therefore the $\mathbb Q_p$-linear spans of elements of $\textrm{\rm End}(E_0[p^{\infty}])^*$ is uncountable. As
$\textrm{\rm End}(E_0)\otimes\mathbb Q$ is countable, there is a
$\phi\in\textrm{\rm End}(E_0[p^{\infty}])^*$ whose $\mathbb Q_p$-linear span cannot be spanned by the left hand side of the inclusion above. 
\end{proof}

Let $\cur E_1$ be an elliptic curve over $R$ whose special fibre is $E_0$ and whose generic fibre $E_1$ over $F=k\lser{t}$ is ordinary. Via the isomorphism $\phi$ in the lemma above we can consider $\cur E_1[p^{\infty}]$ as a deformation of
$E_0[p^{\infty}]$. By the Serre--Tate theorem \cite[V. Theorem 2.3]{Mes72} there is a deformation
$\cur E_2$ of $E_0$ over $R$ corresponding to this deformation of $p$-divisible groups. Let $E_2$ denote the generic fibre of $\cur E_2$ over $F$.

\begin{proposition}\label{supersingular} The map
$$\mathbf{D}^{\dagger}_{E_1,E_2}:\textrm{\rm Hom}(E_1,E_2)\otimes \mathbb Q_p\longrightarrow \mathrm{Hom}_{\underline{\mathbf{M}\Phi}_{\cur{E}^\dagger}^\nabla}(\mathbf{D}^\dagger(E_1),\mathbf{D}^\dagger(E_2)) $$
is not surjective. 
\end{proposition}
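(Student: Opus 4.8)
The plan is to exhibit a single element of $\mathrm{Hom}_{\pnm{\cur{E}^\dagger}}(\mathbf{D}^\dagger(E_1),\mathbf{D}^\dagger(E_2))$, built from the isomorphism $\phi$ of Lemma \ref{many_iso}, and to show it is not in the image of $\mathbf{D}^\dagger_{E_1,E_2}$. By the construction of $\cur{E}_2$, the Serre--Tate equivalence provides an isomorphism $\psi\colon\cur{E}_1[p^\infty]\isomto\cur{E}_2[p^\infty]$ of $p$-divisible groups over $R$ whose reduction modulo $t$ is $\phi$. Applying in turn de\thinspace Jong's equivalence over $\Gamma$, the base change $-\otimes_\Gamma\cur{E}$, Kedlaya's full faithfulness theorem \cite[Theorem 5.1]{Ked04c} (to descend to $\cur{E}^\dagger$), and the identification $\mathbf{D}^\dagger(E_i)\cong H^1_\rig(E_i/\cur{E}^\dagger)^\vee(-1)$ recalled above, the isomorphism $\psi$ produces an isomorphism $\mathbf{D}^\dagger(\psi)\in\mathrm{Hom}_{\pnm{\cur{E}^\dagger}}(\mathbf{D}^\dagger(E_1),\mathbf{D}^\dagger(E_2))$; this is the element I claim is not algebraic.

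The key preliminary observation is that $\mathrm{End}(E_1)=\Z$. Indeed $j(\cur{E}_1)\in R$ must be non-constant --- otherwise the generic fibre would be supersingular, contradicting that $E_1$ is ordinary --- so $E_1$ is non-isotrivial; but a curve with complex multiplication over a field of characteristic $p$ has $j$-invariant algebraic over the prime field, so $E_1$ has no complex multiplication. Consequently $\mathrm{Hom}(E_1,E_2)$ is a free $\Z$-module of rank at most one: if it is nonzero then $E_1$ and $E_2$ are isogenous, and $\mathrm{Hom}(E_1,E_2)\otimes\Q$ is one-dimensional over $\mathrm{End}(E_1)\otimes\Q=\Q$.

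Now suppose for contradiction that $\mathbf{D}^\dagger(\psi)=\mathbf{D}^\dagger_{E_1,E_2}(f)$ for some $f\in\mathrm{Hom}(E_1,E_2)\otimes\Q_p$. The composite functor from the $\Q_p$-isogeny category of $p$-divisible groups over $F$ to $\pnm{\cur{E}^\dagger}$ is faithful (de\thinspace Jong's functor is an equivalence, $\Gamma\hookrightarrow\cur{E}$ is injective on Hom-groups of $\Gamma$-free modules, and $-\otimes_{\cur{E}^\dagger}\cur{E}$ is fully faithful), so $f[p^\infty]=\psi$ over $F$. In particular $\psi\neq 0$ forces $\mathrm{Hom}(E_1,E_2)\neq 0$, hence $\mathrm{Hom}(E_1,E_2)=\Z f_0$ for a generator $f_0$ and $f=cf_0$ with $c\in\Q_p^\times$. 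Since $\cur{E}_1,\cur{E}_2$ are abelian schemes over the discrete valuation ring $R$ they are the N\'eron models of their generic fibres, so $f_0$ extends to a homomorphism $\cur{f}_0\colon\cur{E}_1\to\cur{E}_2$; write $\bar{f}_0\in\mathrm{End}(E_0)$ for its reduction modulo $t$. Since restriction from $R$ to $F$ is injective on homomorphisms of $p$-divisible groups, the relation $f[p^\infty]=\psi$ already holds over $R$ in the form $c\cdot\cur{f}_0[p^\infty]=\psi$; reducing modulo $t$ and identifying $\mathrm{End}(E_0[p^\infty])\otimes_{\Z_p}\Q_p$ with $\mathrm{End}(E_0)\otimes\Q_p$ via Tate's $p$-adic isogeny theorem, we get $c\cdot(\bar{f}_0\otimes 1)=\phi$. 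As $\phi$ is an isomorphism, $\bar{f}_0\neq 0$, so the $\Q_p$-line spanned by $\phi$ is spanned by $\bar{f}_0\in\mathrm{End}(E_0)\subset\mathrm{End}(E_0)\otimes\Q$ --- contradicting Lemma \ref{many_iso}. Hence $\mathbf{D}^\dagger_{E_1,E_2}$ is not surjective.

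The genuinely delicate point is the middle of the last paragraph: tracking a single identity of morphisms along the chain $R\leadsto F\leadsto\mathbf{DM}_\Gamma\leadsto\pnm{\cur{E}}\leadsto\pnm{\cur{E}^\dagger}$ and then back down to the special fibre over $k$, while checking the requisite faithfulness and injectivity at each stage. Everything else (the construction of $\psi$, the reduction-mod-$t$ compatibilities, the N\'eron property) is routine; but one should note that the argument closes only because $\mathrm{Hom}(E_1,E_2)$ has rank at most one, which is why the non-isotriviality of $E_1$ is recorded before invoking Lemma \ref{many_iso}.
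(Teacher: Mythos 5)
Your proof is correct and follows essentially the same route as the paper's: both derive a contradiction with Lemma \ref{many_iso} by combining the Serre--Tate lift $\psi$ of $\phi$, the non-isotriviality of $E_1$ (forcing $\mathrm{Hom}(E_1,E_2)$ to have rank at most one), the N\'eron property, and specialisation to $E_0$. The only substantive divergence is that where the paper invokes de\thinspace Jong's full faithfulness theorem to identify $\mathrm{Hom}(\mathbf{D}(\cur{E}_1),\mathbf{D}(\cur{E}_2))\otimes_{\Z_p}\Q_p$ with $\mathrm{Hom}(\mathbf{D}^\dagger(E_1),\mathbf{D}^\dagger(E_2))$, you get by with only faithfulness of the relevant functors plus injectivity of restriction of $p$-divisible group homomorphisms from $R$ to $F$, which is a mild (and legitimate) simplification.
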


\begin{proof}
Assume for contradiction that in fact $\mathbf{D}^{\dagger}_{E_1,E_2}$ is an isomorphism. By construction
$\cur E_1[p^{\infty}]\cong
\cur E_2[p^{\infty}]$, so by the functoriality of Dieudonn\'e modules $\textrm{\rm Hom}(\mathbf{D}(\cur E_1),\mathbf D(\cur E_2))$ is non-zero. Hence $\textrm{\rm Hom}(\mathbf{D}( E_1),\mathbf D( E_2))$ is also non-zero. As
$$\mathbf{D}^{\dagger}(E_i)\otimes_{\cur{E}^\dagger} \mathcal E=\mathbf D( E_i)\otimes_\Gamma \mathcal E,$$
we get that
$\textrm{\rm Hom}(\mathbf D^{\dagger}(E_1),
\mathbf D^{\dagger}( E_2))$ is also non-zero, by Kedlaya's full faithfullness theorem \cite[Theorem 5.1]{Ked04c}. So by our assumptions $\textrm{\rm Hom}( E_1, E_2)$ is also non-zero, and the elliptic curves $ E_1$ and $ E_2$ are isogeneous. 

As $\cur E_1$ is generically ordinary but has a supersingular special fibre, it is not constant, that is, the $j$-invariant of its generic fibre $j(E_1)\not\in\overline{\mathbb F}_p$. Therefore End$( E_1)=\mathbb Z$, so by the above
$\textrm{\rm Hom}(E_1, E_2)
\otimes\mathbb Q_p$ is one-dimensional. Therefore the same holds for $\textrm{\rm Hom}(\mathbf D^{\dagger}( E_1),
\mathbf D^{\dagger}( E_2))$, too. We have a commutative diagram:
$$\xymatrix{  \textrm{\rm Hom}(\cur E_1,\cur E_2)
\otimes\mathbb Q_p\ar[r] \ar[d] &
\textrm{\rm Hom}( E_1,E_2)
\otimes\mathbb Q_p
 \ar[d]\\
\textrm{\rm Hom}(\mathbf D(\cur E_1),
\mathbf D(\cur E_2))\otimes_{\mathbb Z_p}
\mathbb Q_p\ar[r] &
\textrm{\rm Hom}(\mathbf D^{\dagger}( E_1),
\mathbf D^{\dagger}(E_2)).}$$
The lower horizontal map is an isomorphism by de Jong's full faithfullness theorem \cite{dJ98}, the upper horizontal map is an isomorphism since any abelian scheme is the N\'eron model of its generic fibre, and the right vertical map is an isomorphism by assumption. So the left vertical map is an isomorphism, too. Specialisation furnishes us with another commutative diagram:
$$\xymatrix{ \textrm{\rm Hom}(\cur E_1,\cur E_2)
\otimes\mathbb Q_p
\ar[r] \ar[d]& \textrm{\rm End}(E_0)
\otimes\mathbb Q_p\ar[d] \\
\textrm{\rm Hom}(\mathbf D(\cur E_1),
\mathbf D(\cur E_2))\otimes_{\mathbb Z_p}
\mathbb Q_p\ar[r] &
\textrm{\rm End}(\mathbf D(E_0))\otimes_{\mathbb Z_p}
\mathbb Q_p.}$$
By construction the image of the lower horizontal map in 
$$\textrm{\rm End}(\mathbf D(E_0))\otimes_{\mathbb Z_p}
\mathbb Q_p=\textrm{\rm End}(E_0[p^{\infty}])\otimes_{\mathbb Z_p}
\mathbb Q_p$$
contains the span of $\phi$. Since the domain of this map is one-dimensional, we get that its image is the span of $\phi$. Since the left vertical map is an isomorphism by the above, we get that the span of $\phi$ is spanned by the specialisation of any non-zero isogeny $\cur E_1\to\cur E_2$. This is a contradiction.
\end{proof}

We therefore arrive at the following.

\begin{corollary} There exists a smooth, projective relative surface $\cur{X}/R$ with generic fibre $X$ and special fibre $X_0$, and a class $\cur L \in \mathrm{Pic}(X_0)_{\Q_p}$ whose Chern class $c_1(\cur L)\in H^2_\rig(X/\cur{R})$ lies inside $H^2_\rig(X/\cur{E}^\dagger)$ but which does not lift to $\mathrm{Pic}(\cur X)_{\Q_p}$.
\end{corollary}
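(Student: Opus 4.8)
The plan is to obtain the corollary as a formal consequence of the two substantive results already established, Theorem~\ref{tateiso} and Proposition~\ref{supersingular}. Concretely, I would take $\cur X := \cur E_1\times_R\cur E_2$, where $\cur E_1$ is the elliptic curve over $R$ with special fibre $E_0$ and ordinary generic fibre, and $\cur E_2$ is the Serre--Tate deformation of $E_0$ attached to the isomorphism $\phi$ of Lemma~\ref{many_iso}, exactly as set up just before Proposition~\ref{supersingular}. Since $\cur E_1$ and $\cur E_2$ are abelian schemes over the affine base $\spec R$ they are projective over $R$, hence so is their fibre product $\cur X$; it is visibly smooth of relative dimension $2$, with generic fibre $X=E_1\times E_2$ and special fibre $X_0=E_0\times_k E_0$. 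Using the canonical principal polarisation $\cur E_2\cong\cur E_2^\vee$, this is the same scheme $\cur E_1\times_R\cur E_2^\vee$ that appears in the discussion preceding Theorem~\ref{tateiso}.

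I would then argue by contradiction. Suppose that every $\cur L\in\mathrm{Pic}(X_0)_{\Q_p}$ whose Chern class $c_1(\cur L)\in H^2_\rig(X/\cur R)$ lies in $H^2_\rig(X/\cur E^\dagger)$ lifts to $\mathrm{Pic}(\cur X)_{\Q_p}$. This is verbatim the hypothesis of Theorem~\ref{tateiso}, so that theorem forces the (automatically injective) map $\mathbf{D}^\dagger_{E_1,E_2}$ to be an isomorphism. On the other hand, Proposition~\ref{supersingular} asserts that for this particular pair $\cur E_1,\cur E_2$ the map $\mathbf{D}^\dagger_{E_1,E_2}$ is \emph{not} surjective, hence not an isomorphism. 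This contradiction shows that the assumed lifting property fails, which is exactly the assertion of the corollary: there is a class $\cur L\in\mathrm{Pic}(X_0)_{\Q_p}$ with $c_1(\cur L)\in H^2_\rig(X/\cur E^\dagger)\subset H^2_\rig(X/\cur R)$ that does not lift to $\mathrm{Pic}(\cur X)_{\Q_p}$.

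I do not expect any real obstacle: all the work has been front-loaded. Theorem~\ref{tateiso} contains the K\"unneth decomposition of $H^2_\rig(X/\cur E^\dagger)$ and the identification of its $(\nabla=0,\varphi=p)$-subspace with $\Q_p^{2}\oplus\mathrm{Hom}(\mathbf D^\dagger(E_1),\mathbf D^\dagger(E_2))$ in the category of $\pn$-modules over $\cur E^\dagger$, using the Tate conjecture for divisors on the product of elliptic curves $X_0$ together with injectivity of the specialisation map; and Proposition~\ref{supersingular} combines the Honda--Tate existence of $E_0$, the Serre--Tate construction of $\cur E_2$, and the countability and dimension argument of Lemma~\ref{many_iso}. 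The only point worth stressing is that the negation of the corollary's conclusion is literally the hypothesis of Theorem~\ref{tateiso}, so nothing beyond quoting these two results is required.
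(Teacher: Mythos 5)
Your proposal is correct and is exactly the argument the paper intends: the corollary is stated as an immediate consequence of Theorem~\ref{tateiso} and Proposition~\ref{supersingular}, since the negation of its conclusion is literally the hypothesis of Theorem~\ref{tateiso}, which would force $\mathbf{D}^\dagger_{E_1,E_2}$ to be an isomorphism, contradicting Proposition~\ref{supersingular}. Nothing further is needed.
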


\bibliographystyle{../../Templates/bibsty}
\bibliography{../../lib.bib}

\providecommand{\bysame}{\leavevmode\hbox to3em{\hrulefill}\thinspace}
\providecommand{\MR}{\relax\ifhmode\unskip\space\fi MR }
\providecommand{\MRhref}[2]{%
  \href{http://www.ams.org/mathscinet-getitem?mr=#1}{#2}
}
\providecommand{\href}[2]{#2}
\begin{thebibliography}{BBM82}

\bibitem[BBM82]{BBM82}
P.~Berthelot, L.~Breen, and W.~Messing, \emph{Th\'{e}orie de {D}ieudonn\'{e}
  {C}ristalline {II}}, Lecture Notes in Mathematics, vol. 930, Springer-Verlag,
  1982, \url{http://dx.doi.org/10.1007/BFb0093025}.

\bibitem[BO83]{BO83}
P.~Berthelot and A.~Ogus, \emph{{$F$}-isocrystals and {D}e {R}ham {C}ohomology.
  {I}}, Invent. Math. \textbf{72} (1983), no.~2, 159--199,
  \url{http://dx.doi.org/10.1007/BF01389319}.

\bibitem[Car16]{Car16}
D.~Caro, \emph{Hard {L}efschetz theorem in {$p$}-adic cohomology}, Rend. Semin.
  Mat. Univ. Padova \textbf{136} (2016), 225--255,
  \url{https://doi.org/10.4171/RSMUP/136-15}.

\bibitem[CV15]{CV15}
D.~Caro and D.~Vauclair, \emph{Logarithmic {$p$}-bases and arithmetical
  differential modules}, preprint (2015),
  \url{https://arXiv.org/abs/1511.07797}.

\bibitem[dJ95]{dJ95b}
A.~J. de~Jong, \emph{Crystalline {D}ieudonn\'{e} module theory via formal and
  rigid geometry}, Publ. Math. I.H.E.S. \textbf{82} (1995), 5--96,
  \url{http://www.numdam.org/item?id=PMIHES_1995__82__5_0}.

\bibitem[dJ98]{dJ98}
\bysame, \emph{Homomorphisms of {B}arsotti-{T}ate groups and crystals in
  positive charateristic}, Inventionnes Mathematica \textbf{134} (1998),
  301--333, \url{http://dx.doi.org/10.1007/s002220050266}.

\bibitem[HK94]{HK94}
O.~Hyodo and K.~Kato, \emph{Semi-stable reduction and crystalline cohomology
  with logarithmic poles}, Ast\'erisque (1994), no.~223, 221--268, P{\'e}riodes
  $p$-adiques (Bures-sur-Yvette, 1988).

\bibitem[Ill79]{Ill79}
L.~Illusie, \emph{Complexe de de\thinspace {R}ham-{W}itt et cohomologie
  cristalline}, Ann. Sci. \'Ecole Norm. Sup. (4) \textbf{12} (1979), no.~4,
  501--661, \url{http://www.numdam.org/item?id=ASENS_1979_4_12_4_501_0}.

\bibitem[Kat89]{Kat89}
K.~Kato, \emph{Logarithmic structures of {F}ontaine-{I}llusie}, Algebraic
  Analysis, Geometry and Number Theory (1989), 191--224.

\bibitem[Kat91]{Kat91}
\bysame, \emph{The explicit reciprocity law and the cohomology of
  {F}ontaine-{M}essing}, Bull. Soc. Math. France \textbf{119} (1991), no.~4,
  397--441, \url{http://www.numdam.org/item?id=BSMF_1991__119_4_397_0}.

\bibitem[Ked00]{Ked00}
K.~S. Kedlaya, \emph{Descent theorems for overconvergent {$F$}-crystals}, Ph.D.
  thesis, Massachusetts Institute of Technology, 2000.

\bibitem[Ked04]{Ked04c}
\bysame, \emph{Full faithfulness for overconvergent {$F$}-isocrystals},
  Geometric aspects of {D}work theory. {V}ol. {I}, {II}, Walter de Gruyter,
  Berlin, 2004, pp.~819--835.

\bibitem[Ked07]{Ked07}
\bysame, \emph{Semistable reduction for overconvergent {$F$}-isocrystals {I}:
  {U}nipotence and logarithmic extensions}, Comp. Math. \textbf{143} (2007),
  1164--1212, \url{http://dx.doi.org/10.1112/S0010437X07002886}.

\bibitem[LP16]{LP16}
C.~Lazda and A.~P{\'{a}}l, \emph{{R}igid {C}ohomology over {L}aurent {S}eries
  {F}ields}, {A}lgebra and {A}pplications, vol.~21, Springer, 2016,
  \url{http://dx.doi.org/10.1007/978-3-319-30951-4}.

\bibitem[LZ04]{LZ04}
A.~Langer and T.~Zink, \emph{De\thinspace {R}ham--{W}itt cohomology for a
  proper and smooth morphism}, J. Inst. Math. Jussieu \textbf{3} (2004), no.~2,
  231--314, \url{http://dx.doi.org/10.1017/S1474748004000088}.

\bibitem[LZ05]{LZ05}
\bysame, \emph{Gau{\ss}-{M}anin connection via {W}itt-differentials}, Nagoya
  Mathematical Journal \textbf{179} (2005), 1--16,
  \url{http://projecteuclid.org/euclid.nmj/112851845}.

\bibitem[Mat17]{Mat17}
H.~Matsuue, \emph{On relative and overconvergent de\thinspace {R}ham--{W}itt
  cohomology for log schemes}, Math. Z. \textbf{286} (2017), no.~1-2, 19--87,
  \url{http://dx.doi.org/10.1007/s00209-016-1755-1}.

\bibitem[Mes72]{Mes72}
W.~Messing, \emph{The crystals associated to {B}arsotti-{T}ate groups: with
  applications to abelian schemes}, Lecture Notes in Mathematics, vol. 264,
  Springer-Verlag, Berlin-New York, 1972,
  \url{http://dx.doi.org/10.1007/BFb0058301}.

\bibitem[Mor14]{Mor14}
M.~Morrow, \emph{A variational {T}ate conjecture in crystalline cohomology},
  preprint (2014), \url{https://arXiv.org/abs/1408.6783}.

\bibitem[Mor15]{Mor15}
\bysame, \emph{{$K$}-theory and logarithmic {H}odge--{W}itt sheaves of formal
  schemes in characteristic $p$}, preprint (2015),
  \url{https://arXiv.org/abs/1512.04703}.

\bibitem[MT04]{MT04}
S.~Matsuda and F.~Trihan, \emph{Image directe sup\'erieure et unipotence}, J.
  Reine Angew. Math. \textbf{469} (2004), 47--54,
  \url{http://dx.doi.org/10.1515/crll.2004.028}.

\bibitem[P{\'{a}}l15]{Pal15b}
A.~P{\'{a}}l, \emph{The {$p$}-adic monodromy group of abelian varieties over
  global function fields in charcateristic {$p$}}, preprint (2015),
  \url{https://arXiv.org/abs/1512.03587}.

\bibitem[Pet03]{Pet03}
D.~Petrequin, \emph{Classes de {C}hern et classes de cycles en cohomologie
  rigide}, Bull. Soc. Math. France \textbf{131} (2003), no.~1, 59--121,
  \url{http://eudml.org/doc/272359}.

\bibitem[Pop86]{Pop86}
D.~Popescu, \emph{General {N}\'eron desingularization and approximation},
  Nagoya Math. J. \textbf{104} (1986), 85--115,
  \url{http://projecteuclid.org/euclid.nmj/1118780554}.

\bibitem[Sil86]{Sil86}
J.~H. Silverman, \emph{The arithmetic of elliptic curves}, Graduate Texts in
  Mathematics, vol. 106, Springer-Verlag, New York, 1986,
  \url{http://dx.doi.org/10.1007/978-1-4757-1920-8}.

\bibitem[Tsu98]{Tsu98}
N.~Tsuzuki, \emph{Slope filtration of quasi-unipotent overconvergent
  {F}-isocrystals}, Ann. Inst. Fourier, Grenoble \textbf{48} (1998), no.~2,
  379--412, \url{http://www.numdam.org/item?id=AIF_1998__48_2_379_0}.

\bibitem[Yam11]{Yam11}
G.~Yamashita, \emph{The {$p$}-adic {Lefschetz} {$(1,1)$} theorem in the
  semistable case, and the {Picard} number jumping locus}, Mathematical
  Research Letters \textbf{18} (2011), no.~1, 107--124,
  \url{http://dx.doi.org/10.4310/MRL.2011.v18.n1.a8}.

\end{thebibliography}

\end{document}